\theoremstyle{plain}
\newtheorem{thm}{Theorem}[subsection]
\newtheorem{lem}{Lemma}[subsection]
\newtheorem{cor}{Corollary}[subsection]
\newtheorem*{prob}{Problem}
\theoremstyle{definition}
\newtheorem{dfn}{Definition}[subsection]
\newtheorem{ex}{Example}[subsection]
\newtheorem*{theorem}{Main Theorem}
\newtheorem*{acknowledgements}{Acknowledgement}
\begin{document}

\setlength{\baselineskip}{12pt}

\title{Kirby-Thompson invariants of distant sums of standard surfaces}
\author{Minami Taniguchi}

\keywords{Knotted surface, Bridge trisection, Kirby-Thompson invariant}

\subjclass[2020]{Primary 57K10; Secondary 57K20}

\maketitle


\begin{abstract}
Blair--Campisi--Taylor--Tomova \cite{Kirby_Thompson_1} defined the $\mathcal{L}$-invariant $\mathcal{L}(F)$ of a knotted surface $F$, using pants complexes of trisection surfaces of bridge trisections of $F$. After that, Aranda--Pongtanapaisan--Zhang \cite{Kirby_Thompson_3} introduced the $\mathcal{L}^{*}$-invariant $\mathcal{L}^{*}(F)$ using dual curve complexes instead of pants complexes. In this paper, we determine both of $\mathcal{L}$-invariant and $\mathcal{L}^{*}$-invariant of any finite distant sum of standard surfaces, and this is the first example of knotted surfaces whose bridge numbers and these invariants can be arbitrary large.
\end{abstract}


\section{Introduction}

Inspired by a trisection of a smooth oriented connected closed 4-manifold first defined by Gay--Kirby \cite{trisection_1}, Meier--Zupan \cite{bridge_trisection_1} defined a bridge trisection of a knotted surface in $S^{4}$, which is an analogue of bridge decompositions of classical links in $S^{3}$. Roughly speaking, a bridge trisection of a knotted surface in $S^{4}$ is a splitting of it into three simple pieces, called trivial disk systems. Meier and Zupan showed that every knotted surface in $S^{4}$ has a bridge trisection and any two bridge trisections of the same surface are related by a finite sequence of (de)stabilizations. Using a notion of bridge trisections, we obtain a new way of depicting a given bridge trisected surface, which is called a tri-plane diagram consisting of three trivial tangle diagrams each of which has the same bridge number. 

After that, Blair--Campisi--Taylor--Tomova \cite{Kirby_Thompson_1} defined the \textbf{$\mathcal{L}$-invariant} of a knotted surface in $S^{4}$ using minimal bridge trisections of the surface and pants complexes of the trisection surfaces of the bridge trisections. The invariant measures a complexity of the given knotted surface. They showed that, if the $\mathcal{L}$-invariant of a knotted surface is equal to zero, then it is smoothly isotopic to a finite distant sum of conneced sum of finitely many standard surfaces except standard tori. 

Aranda--Pongtanapaisan--Zhang \cite{Kirby_Thompson_3} defined a new invariant, called the \textbf{$\mathcal{L}^{*}$-invariant} using dual curve complexes instead of pants complexes. They showed that a knotted surface whose $\mathcal{L}^{*}$-invariant is less than or equal to two becomes smoothly isotopic to a finite distant sum of conneced sum of finitely many standard surfaces except standard tori. Hence, the $\mathcal{L}$-invariant and the $\mathcal{L}^{*}$-invariant give us a sufficient condition of a knotted surface to be an unlink in $S^{4}$.

It is rather difficult to calculate the $\mathcal{L}$-invariant and the $\mathcal{L}^{*}$-invariant with large bridge number, because these invariants are defined as the minimum values over special pants distances with respect to three trivial tangles forming the spine of a bridge trisection. In fact, values of the $\mathcal{L}$-invariant (or the $\mathcal{L}^{*}$-invariant) have been determined only for knotted surfaces with bridge number less than or equal to six (see \cite{Kirby_Thompson_1}, \cite{Kirby_Thompson_2}, and \cite{Kirby_Thompson_3} for more details). In this paper, we determine these invariants of distant sums of finite standard unknotted surfaces allowing the standard tori, and this is the first example of knotted surfaces whose bridge numbers and the values of these invariants can be arbitrary large.

\begin{theorem}[Theorem \ref{thm:main theorem}]
If $F$ is a finite distant sum of standard unknotted surfaces in $S^{4}$ and $n(F)$ is the number of standard unknotted tori included in $F$, then $\mathcal{L}(F)=\mathcal{L}^{*}(F)=3n(F).$
\end{theorem}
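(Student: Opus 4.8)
The plan is to reduce the theorem to an additivity statement for the two invariants under distant sum, combined with their (essentially known) values on the individual standard building blocks. Write $F=\bigsqcup_{i=1}^{m}F_{i}$ with each $F_{i}$ a standard unknotted surface. I would first dispatch the building blocks: the standard sphere and the two standard projective planes have $\mathcal{L}=\mathcal{L}^{*}=0$ (the relevant complexes of the $2$- and $4$-punctured spheres make this immediate); the standard unknotted torus $T$ satisfies $\mathcal{L}(T)=\mathcal{L}^{*}(T)=3$ (known for small bridge number, \cite{Kirby_Thompson_1}, \cite{Kirby_Thompson_3} --- and in any case $\mathcal{L}^{*}(T)\ge 3$ by \cite{Kirby_Thompson_3}'s bound that $\mathcal{L}^{*}\le 2$ forces the absence of torus summands, while $\mathcal{L}(T)\le 3$ follows from an explicit perimeter-$3$ triangle in the pants complex of the $6$-punctured sphere); and every remaining standard surface, of higher genus or non-orientable, also has $\mathcal{L}=\mathcal{L}^{*}=0$, which I would establish by exhibiting a minimal bridge trisection of it whose three associated cut systems are mutually compatible --- essentially the (more elementary) converse to the structure theorem of \cite{Kirby_Thompson_1}. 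Granting additivity, $\mathcal{L}(F)=\mathcal{L}^{*}(F)=\sum_{i}\mathcal{L}(F_{i})=3\,n(F)$, since exactly the torus summands contribute. I would prove the two halves of additivity separately; throughout, by a \emph{triangle} I mean a triple consisting of one pants decomposition (for $\mathcal{L}$), or one cut system (for $\mathcal{L}^{*}$), compatible with each of the three trivial tangles of a bridge trisection, the invariant of that trisection being the least perimeter of such a triangle in the ambient complex.

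For the upper bound I would prove subadditivity, $\mathcal{L}(F_{1}\sqcup F_{2})\le\mathcal{L}(F_{1})+\mathcal{L}(F_{2})$ and the same for $\mathcal{L}^{*}$. Take minimal bridge trisections $\mathbb{T}_{1},\mathbb{T}_{2}$ of $F_{1},F_{2}$ inside disjoint $4$-balls and let $\mathbb{T}$ be their distant sum; it is a minimal bridge trisection of $F_{1}\sqcup F_{2}$, since bridge number is additive under distant sum (via the relation $\chi=c_{1}+c_{2}+c_{3}-b$). The trisection surface of $\mathbb{T}$ is the connected sum of those of $\mathbb{T}_{1}$ and $\mathbb{T}_{2}$ along a separating curve disjoint from all cut systems, so each of its three cut systems is the block union of the corresponding ones from $\mathbb{T}_{1}$ and $\mathbb{T}_{2}$. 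Concatenating an optimal triangle-path for $\mathbb{T}_{1}$ (performed with the $\mathbb{T}_{2}$-block frozen) with one for $\mathbb{T}_{2}$ produces a legal path, since an elementary move supported in one block fixes the other block entirely; its perimeter is $\mathcal{L}(F_{1})+\mathcal{L}(F_{2})$ (resp.\ $\mathcal{L}^{*}(F_{1})+\mathcal{L}^{*}(F_{2})$). Iterating over all summands gives $\mathcal{L}(F)\le 3\,n(F)$ and $\mathcal{L}^{*}(F)\le 3\,n(F)$.

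The lower bound, $\mathcal{L}^{*}(F_{1}\sqcup F_{2})\ge\mathcal{L}^{*}(F_{1})+\mathcal{L}^{*}(F_{2})$ (and the same for $\mathcal{L}$), is the heart of the proof and the step I expect to be the main obstacle. It has two ingredients. First, a structural claim: every minimal bridge trisection of $F$ should, after isotopy, be the distant sum of minimal bridge trisections of the standard summands --- equivalently, its trisection surface carries a system of pairwise disjoint, mutually non-parallel separating curves, disjoint from all three cut systems, cutting it into the blocks of the $F_{i}$. I would derive this from the fact that a minimal bridge trisection of a split knotted surface is itself split (the $S^{4}$-analogue of the classical statement for split links) together with the rigidity of minimal bridge trisections of standard surfaces recorded in \cite{bridge_trisection_1}. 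Second, an additivity-of-distance estimate: given an optimal triangle-path for the big trisection, its subsurface projections to the individual blocks are triangle-paths for the $F_{i}$, hence of perimeter at least $\mathcal{L}^{*}(F_{i})$ (resp.\ $\mathcal{L}(F_{i})$) each, and because every elementary move in the path alters curves lying in a single block --- the separating curves never needing to move --- the contributions of distinct blocks occupy disjoint portions of the path, so its perimeter is at least $\sum_{i}\mathcal{L}^{*}(F_{i})=3\,n(F)$. The delicate point, on which I would concentrate, is exactly this non-interference: ruling out ``entangled'' optimal paths that move through several blocks at once, and ruling out exotic minimal bridge trisections of $F$ that do not split along the tori. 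Together with the upper bound this yields $\mathcal{L}(F)=\mathcal{L}^{*}(F)=3\,n(F)$.
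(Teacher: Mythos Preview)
Your upper-bound strategy (subadditivity via block concatenation) is essentially the paper's: explicit efficient defining pairs for the distant-sum trisection realise $\sum d(p_{ij}^{i},p_{ij}^{j})=3n(F)$.

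Your lower-bound strategy is genuinely different, and the step you yourself flag as delicate is a real gap. You want to project an optimal configuration onto the summands and argue that moves in distinct blocks occupy disjoint portions of the path, but nothing guarantees that the six pants decompositions $p_{ij}^{i}$ contain the separating curves of the block decomposition; without them, ``projection to a block'' is not well-defined and a single A-move can involve curves meeting several $\Pi_{l}=F_{l}\cap\Sigma$. The paper does not try to control this. Instead, after a string of structural lemmas constraining how curves of an efficient defining pair sit relative to the sets $\Pi_{l}$, it proves (Lemma~\ref{lem:main lemma 1}) that for every torus summand $T_{l}$ and every $i$ the common part $\psi_{i}=p_{ij}^{i}\cap p_{ki}^{i}$ contains a cut-reducing curve $\gamma_{l}^{i}$ bounding exactly three of the six punctures of $T_{l}\cap\Sigma$. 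Then (Lemma~\ref{lem:main lemma 2}) each such $\gamma_{l}^{i}$ must be moved along the leg $p_{ij}^{i}\to p_{ij}^{j}$, since otherwise it would be c-reducing for all three tangles and force the irreducible unstabilized $(3;1)$-trisection of $T_{l}$ to be reducible or stabilized via Lemma~\ref{lem:reducible and stabilized}. As the curves $\gamma_{1}^{i},\dots,\gamma_{n(F)}^{i}$ are pairwise distinct, each of the three legs has length at least $n(F)$ in $C^{*}(\Sigma)$, giving $\mathcal{L}^{*}(\mathcal{T})\ge 3n(F)$ with no projection or non-interference argument needed. General additivity of $\mathcal{L}$ and $\mathcal{L}^{*}$ under distant sum is in fact posed in the paper as an open problem; additivity for standard surfaces is deduced \emph{from} the main theorem, not the other way round.

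One correction to your picture of the invariant: it is a hexagon, not a triangle. For each $i$ one chooses an efficient defining pair $(p_{ij}^{i},p_{ki}^{i})$ for the unlink $\partial\mathcal{D}_{i}$, giving six vertices in all; the three ``defining'' edges have fixed length $b-c_{i}$, and $\mathcal{L}(\mathcal{T})$ minimises the sum $\sum d(p_{ij}^{i},p_{ij}^{j})$ over the remaining three.
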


This paper is organized as follows. In Section 2, we review the definitions of bridge trisections of knotted surfaces, pants complexes, and dual curve complexes. In Section 3, we recall the definitions of the $\mathcal{L}$-invariant and the $\mathcal{L}^{*}$-invariant of knotted surfaces and see some simple examples. Finally, we prove the main result in Section 4. Note that every manifold in this paper is supposed to be smooth.

\begin{acknowledgements}

The author would like to express sincere gratitude to his supervisor, Hisaaki Endo, for his support and encouragement throughout this work.

\end{acknowledgements}


\section{Preliminaries}

In order to define the $\mathcal{L}$-invariant and the $\mathcal{L}^{*}$ invariant of a knotted surface in $S^{4}$, we must recall bridge trisections of the knotted surface, pants complexes, and dual curve complexes of the bridge surfaces of the trisection.


\subsection{Bridge trisection}

\quad\, In this subsection, we recall the definition of a bridge trisection of a knotted surface in $S^{4}$ and some concepts around it in order to discuss the main theorem. Let $F\subset{S^{4}}$ be a knotted surface. Note that, in this paper, a pair $(S^{4},F)$ is also refered to as a knotted surface.
\begin{dfn}
Let $b,c_{1},c_{2},c_{3}$ be positive integers. A \textit{$(b;c_{1},c_{2},c_{3})$-bridge trisection} of $(S^{4},F)$ is a decomposition $(X_{1},\mathcal{D}_{1})\cup(X_{2},\mathcal{D}_{2})\cup(X_{3},\mathcal{D}_{3})$ satisfying the following properties, where $b=|\mathcal{D}_{1}\cap\mathcal{D}_{2}\cap\mathcal{D}_{3}|/2$ and $c_{i}=|\mathcal{D}_{i}|$.
\begin{itemize}
\item[(i)] $S^{4}=X_{1}\cup X_{2}\cup X_{3}$ is the genus 0 trisection of $S^{4}$.
\item[(ii)] For each $i\in{\{1,2,3\}}$, $(X_{i},\mathcal{D}_{i})$ is a trivial $c_{i}$-disk system.
\item[(iii)] For each $i\not=j\in{\{1,2,3\}}$, $(B_{ij},\alpha_{ij}):=(X_{i},\mathcal{D}_{i})\cap(X_{j},\mathcal{D}_{j})=(\partial X_{i},\partial\mathcal{D}_{i})\cap(\partial X_{j},\partial\mathcal{D}_{j})$ is a trivial $b$-tangle.
\item[(iv)] For each $\{i,j,k\}=\{1,2,3\}$, $(\partial X_{i},\partial\mathcal{D}_{i})=(B_{ij},\alpha_{ij})\cup_{\partial}\overline{(B_{ki},\alpha_{ki})}$ is a $b$-bridge splitting of the unlink $\partial\mathcal{D}_{i}$.
\end{itemize}
\end{dfn}
From the above properties, it follows that $(\Sigma,\mathrm{p}):=(B_{ij},\alpha_{ij})\cap(B_{ki},\alpha_{ki})$ is a $2b$ punctured 2-sphere, which is called a \textit{bridge surface} of the given bridge trisection, and each $(\partial X_{i},\partial\mathcal{D}_{i})$ is the unlink whose number of link components is $c_{i}$. Let $\mathcal{T}$ denote the bridge trisection and the union $\bigcup_{i\not=j}(B_{ij},\alpha_{ij})$ is called the \textit{spine} of $\mathcal{T}$. We often refer to $(b;c,c,c)$-bridge trisections as $(b;c)$-bridge trisections. An integer $b$ is called the \textit{bridge number} of the given bridge trisection $\mathcal{T}$ and the minimum number of such integers $b$ over all bridge trisections of $F$ is called the \textit{bridge number} of the surface $F$. Since a way of attatching 2-disks in $B^{4}$ along an unlink in $S^{3}=\partial B^{4}$ is unique up to isotopy relative to the boundary, the equivalent class of $\mathcal{T}$ is completely determined by the spine of $\mathcal{T}$. Figure \ref{fig:bridge trisection of the unknotted sphere} below shows an image of the $(2;1)$-bridge trisection of the unknotted 2-sphere $\mathcal{U}$.


\begin{figure}[h]
\center
\includegraphics[width=35mm]{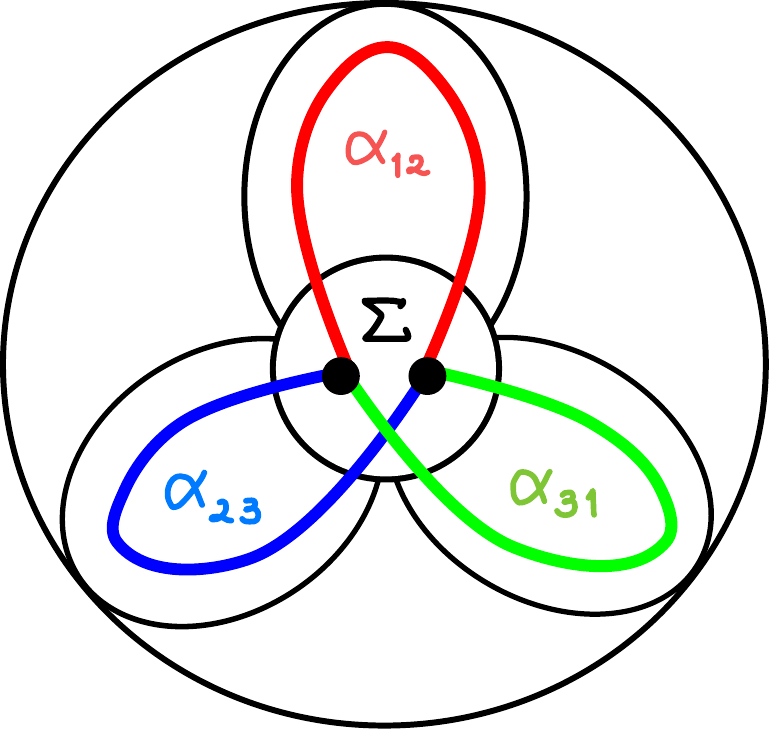}
\caption{An image of the $(2;1)$-bridge trisection of $\mathcal{U}$, where the three colored arcs represent its spine.}
\label{fig:bridge trisection of the unknotted sphere}
\end{figure}

Two bridge trisections $\mathcal{T}$ and $\mathcal{T}'$ of $(S^{4},F)$ are said to be \textit{equivalent} if there is a smooth isotopy of $(S^{4},F)$ carrying the components of $\mathcal{T}$ to the corresponding components of $\mathcal{T}'$ and, we write $\mathcal{T}\simeq\mathcal{T}'$. Meier and Zupan \cite{bridge_trisection_1} showed that any two bridge trisecrions of $F$ become equivalent by a finite stabilization and destabilization operations and any two tri-plane diagrams of them are related by a finite tri-plane moves. If we stabilize a bridge trisection $\mathcal{T}$ once, then the bridge number of $\mathcal{T}$ increases by one. A bridge trisection $\mathcal{T}$ is said to be \textit{minimal} if $\mathcal{T}$ is unstabilized and, in this situation, the bridge number of $\mathcal{T}$ is same as that of $F$. In this paper, since we are interested in the minimal bridge trisections of the standard surfaces, we do not refer to detailed definitions of (de)stabilization operations. See \cite{bridge_trisection_1} for a more detailed explanation of (de)stabilization.

Using a bridge trisection $\mathcal{T}$ of a given knotted surface in $S^{4}$, we obtain a new way of depicting the surface, which is called a \textit{tri-plane diagram} representing the isotopy class of the spine of $\mathcal{T}$. For a detail way of obtaining tri-plane diagrams, see \cite{bridge_trisection_1}, and see \cite{banded_unlink_diagram} for banded unlink diagrams. Figure \ref{fig:triplane diagrams of standard surfaces} shows some examples of tri-plane diagrams of seven simple surfaces $\{\mathcal{U},\mathcal{P}_{+},\mathcal{P}_{-},\mathcal{K}_{2,0},\mathcal{K}_{1,1},\mathcal{K}_{0,2},\mathcal{T}\}$, and they and the bridge trisections of them up to stabilizations and destabilizations are called \textit{standard}, where $\mathcal{U}$ is the \textit{unknotted 2-sphere}, $\mathcal{P}_{\pm}$ is the \textit{unknotted projective planes} with the normal Euler number $\pm2$, respectively, $\mathcal{K}_{ij}:=(\#^{i}\mathcal{P}_{+})\#(\#^{j}\mathcal{P}_{-})$ is the \textit{unknotted Klein bottles}, and $\mathcal{T}$ is the \textit{unknotted torus}. From \cite{bridge_trisection_1}, all of the tri-plane diagrams in Figure \ref{fig:triplane diagrams of standard surfaces} are unstabilized.


\begin{figure}[h]
\center
\includegraphics[width=65mm]{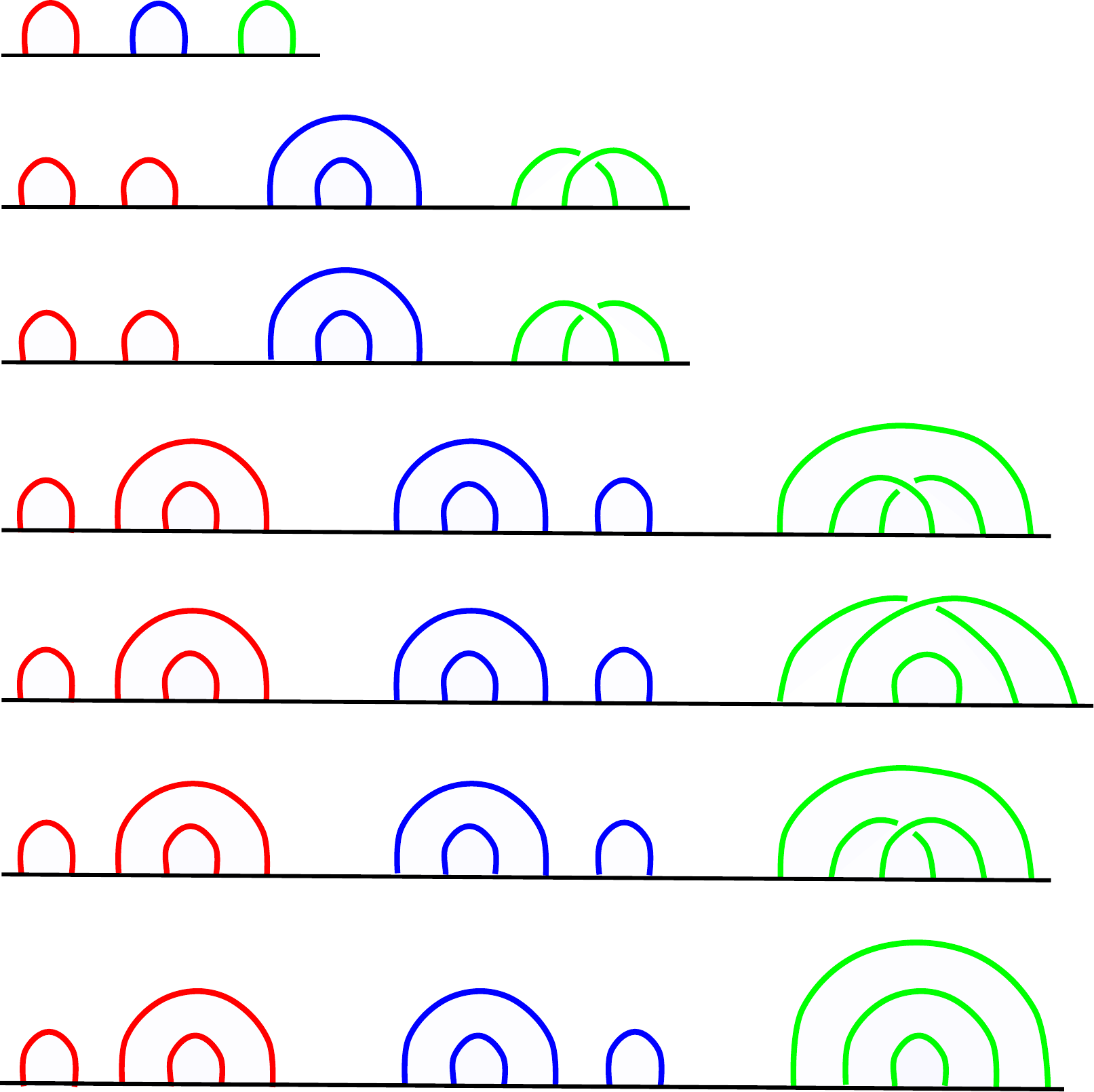}
\caption{Tri-plane diagrams of the standard unknotted surfaces, from the top to the bottom, representing $\mathcal{U},\mathcal{P}_{+},\mathcal{P}_{-},\mathcal{K}_{2,0},\mathcal{K}_{1,1},\mathcal{K}_{0,2}$, and $\mathcal{T}$, respectively.}
\label{fig:triplane diagrams of standard surfaces}
\end{figure}

Next, we introduce a way of obtaining new bridge trisections and new knotted surfaces from bridge trisections of finite knotted surfaces. Let $(S^{4},F_{1})$ and $(S^{4},F_{2})$ be knotted surfaces and $\mathcal{T}_{1}$ and $\mathcal{T}_{2}$ be bridge trisections of $F_{1}$ and $F_{2}$, respectively. Let $\Sigma_{1}$ and $\Sigma_{2}$ be the bridge surfaces of $\mathcal{T}_{1}$ and $\mathcal{T}_{2}$, respectively. Let $a_{*}\in{\Sigma_{*}}$ be a point on  the bridge surface and $\nu(a_{*})\subset{S^{4}}$ be a quite small neighborhood of $a_{*}$ in $S^{4}$ satisfying  that $\nu(a_{*})\cap F_{*}$ is either the empty set or a unit circle $S^{1}$ for each $*\in{\{1,2\}}$. We obtain a new knotted surface and a new bridge trisection of it by pasting $(\overline{S^{4}-\nu(a_{1})},\overline{F_{1}-\nu(a_{1})})$ and $(\overline{S^{4}-\nu(a_{2})},\overline{F_{2}-\nu(a_{2})})$ along their boundaries and there exist two distinct situations. If $a_{*}\notin{F_{*}\cap\Sigma_{*}}$, then we obtain the new bridge trisection of the distant sum $F_{1}\sqcup F_{2}$ and $\mathcal{T}_{1}\sqcup\mathcal{T}_{2}$ denotes the new bridge trisection of $F_{1}\sqcup F_{2}$. We call it the \textit{distant sum} of $\mathcal{T}_{1}$ and $\mathcal{T}_{2}$. On the other hand, if $a_{*}\in{F_{*}\cap\Sigma_{*}}$, then we obtain the new bridge trisection of the connected sum $F_{1}\#F_{2}$ and $\mathcal{T}_{1}\#\mathcal{T}_{2}$ denotes the new bridge trisecion of $F_{1}\#F_{2}$. We call it the \textit{connected sum} of $\mathcal{T}_{1}$ and $\mathcal{T}_{2}$.

From the following lemma, we know that an arbitrary unstabilized bridge trisection of a distant sum of some knotted surfaces is equivalent to the distant sum of unstabilized bridge trisections of the knotted surfaces.

\begin{lem}
Let $F$ be the distant sum of knotted surfaces $F_{1},\cdots,F_{n}$ and $\mathcal{T}$ an unstabilized bridge trisection of $F$. Then, $\mathcal{T}$ is equivalent to a distant sum $\mathcal{T}_{1}\sqcup\cdots\sqcup\mathcal{T}_{n}$, where each $\mathcal{T}_{i}$ is an unstabilized bridge trisection of $F_{i}$.
\label{lem:bridge trisection of distant sum}
\end{lem}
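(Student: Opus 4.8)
The plan is to reduce the statement about $4$-dimensional bridge trisections to a statement about the $3$-dimensional spines, and then to a statement about the unlinks that bound the trivial disk systems in each piece. The key observation, stated in the excerpt, is that the equivalence class of a bridge trisection is completely determined by the isotopy class of its spine $\bigcup_{i\neq j}(B_{ij},\alpha_{ij})$, which is built out of three trivial $b$-tangles glued along a common bridge surface $(\Sigma,\mathrm{p})$. So it suffices to show that if $\mathcal{T}$ is an unstabilized bridge trisection of $F = F_1 \sqcup \cdots \sqcup F_n$, then the spine of $\mathcal{T}$ decomposes as a distant (boundary-connected-sum-free) union of spines of unstabilized bridge trisections of the $F_i$.

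First I would fix notation: since $F$ is a distant sum, there is a collection of disjoint $4$-balls $V_1,\dots,V_n \subset S^4$ with $F_i \subset \mathrm{int}(V_i)$ and $F = \bigsqcup F_i$. The strategy is to isotope the trisection $(X_1,X_2,X_3)$ of $S^4$ so that each $V_i$ sits "nicely" with respect to it — concretely, so that each $V_i$ meets the bridge surface $\Sigma$ in a single disk and meets each handlebody $X_j$ in a single ball, with $V_i \cap \mathcal{T}$ a bridge trisection of $F_i$ inside $V_i \cong B^4$. To arrange this, I would first push each sphere $\partial V_i$ into "bridge position" with respect to $\mathcal{T}$: because $F_i$ is null-homotopic in $V_i$ and, more importantly, because the sphere $\partial V_i$ is separating and $F \cap \partial V_i = \emptyset$, a general-position / innermost-disk argument on $\partial V_i \cap \Sigma$ (a collection of circles on the punctured sphere $\Sigma$, disjoint from the bridge points) lets me surger away inessential intersection circles. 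The trivial-tangle hypothesis (iii)--(iv) is what makes these surgeries realizable by ambient isotopies that do not move $F$: a trivial $b$-tangle in a ball is unknotted, so any properly embedded disk in $B_{ij}$ disjoint from $\alpha_{ij}$ can be isotoped to a "vertical" position, and compressing along it corresponds to an honest isotopy of the tuple $(\Sigma,B_{ij},\alpha_{ij})$.

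Once each $V_i$ meets the spine in a sub-spine, restriction gives a bridge trisection $\mathcal{T}_i := \mathcal{T} \cap V_i$ of $(V_i, F_i) \cong (S^4, F_i)$ (filling the complementary ball trivially, using the uniqueness of disk-capping quoted in the excerpt), and $\mathcal{T}$ is by construction the distant sum $\mathcal{T}_1 \sqcup \cdots \sqcup \mathcal{T}_n$. The remaining point is that each $\mathcal{T}_i$ is \emph{unstabilized}: this follows because bridge number is additive under distant sum, so $b(\mathcal{T}) = \sum_i b(\mathcal{T}_i) \geq \sum_i b(F_i) = b(F)$; since $\mathcal{T}$ is unstabilized we have $b(\mathcal{T}) = b(F)$, forcing $b(\mathcal{T}_i) = b(F_i)$ for every $i$, i.e.\ each $\mathcal{T}_i$ is minimal, hence in particular unstabilized. (Additivity of bridge number under distant sum is itself something I would need to pin down; it is an easy consequence of the definition of the distant sum of bridge trisections together with the fact that in a distant sum the two bridge surfaces are joined along trivial disks, so a minimal-bridge-number distant sum is realized by distant-summing minimal pieces, and no bridge position can do better.)

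The main obstacle will be the ambient-isotopy argument that cleans up $\partial V_i \cap \Sigma$: making precise that innermost-disk surgeries on the intersection circles, performed inside the trivial tangles $B_{ij}$ on both sides, can be realized by an isotopy of $S^4$ that preserves the whole trisected structure $(X_1,X_2,X_3)$ \emph{and} keeps $F$ fixed. This requires care about circles of $\partial V_i \cap \Sigma$ that separate bridge points from one another (these cannot simply be surgered away and must instead be handled by showing, again via triviality of the tangles, that the bridge points on one side can be slid so as to remove the circle), and about nested configurations, where one must always work with an innermost component first. Everything else — the decomposition into $\mathcal{T}_i$'s, the capping, the minimality bookkeeping — is then essentially formal.
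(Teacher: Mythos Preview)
Your approach is the same as the paper's at the level of strategy: find separating $3$-spheres (your $\partial V_i$, the paper's $Q_i$), arrange them to be compatible with the trisection, surger, and read off induced bridge trisections $\mathcal{T}_i$ of the summands. The paper's proof is in fact considerably sketchier than yours --- it simply asserts that after surgering along the $Q_i$ each piece ``inherits'' a bridge trisection from $\mathcal{T}$, without saying a word about how the $Q_i$ are made to respect the trisection structure. Your innermost-circle outline on $\partial V_i\cap\Sigma$ is exactly the content the paper omits, and your identification of this as the main obstacle is accurate.

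One place where you take a longer route than necessary is the argument that each $\mathcal{T}_i$ is unstabilized. You pass through additivity of bridge number together with the implication ``unstabilized $\Rightarrow$ $b(\mathcal{T})=b(F)$''. The paper does state the latter, but it is not an innocent claim in general (compare with unstabilized Heegaard splittings, which need not have minimal genus), and your additivity step is mildly circular since it leans on the decomposition you are in the middle of proving. The direct argument, presumably what the paper means by ``easy to see'', is purely local: a destabilization of some $\mathcal{T}_i$ is supported in that summand and hence yields a destabilization of $\mathcal{T}=\mathcal{T}_1\sqcup\cdots\sqcup\mathcal{T}_n$, contradicting the hypothesis. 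This bypasses any appeal to bridge-number bookkeeping.
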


\begin{proof}
Let $\mathcal{T}$ be an unstabilized bridge trisection of $F$. Since $F$ is the distant sum of $F_{1},\cdots,F_{n}$, there exist $n-1$ decomposing 3-spheres $Q_{1},\cdots,Q_{n-1}\subset{S^{4}}$ such that each $Q_{i}$ separates $F_{i}$ from the others. We surger $(S^{4},F)$ along each $Q_{i}$ to obtain $n$ knotted surfaces $(S^{4},F_{i})$, and it inherits the bridge trisection $\mathcal{T}_{i}$ from $\mathcal{T}$. It is easy to see that each $\mathcal{T}_{i}$ is unstabilized since $\mathcal{T}$ is unstabilized. Furthermore, we can construct the unstabilized bridge trisection $\mathcal{T}_{1}\sqcup\cdots\sqcup\mathcal{T}_{n}$ and it is equivalent to the original bridge trisection $\mathcal{T}$.
\end{proof}

Let $\mathcal{T}$ be a bridge trisection of a knotted surface $(S^{4},F)$. $\mathcal{T}$ is said to be \textit{reducible} if there exist two bridge trisections $\mathcal{T}_{1}$ and $\mathcal{T}_{2}$ such that either $\mathcal{T}\simeq\mathcal{T}_{1}\#\mathcal{T}_{2}$ or $\mathcal{T}\simeq\mathcal{T}_{1}\sqcup\mathcal{T}_{2}$ holds, where both $\mathcal{T}_{1}$ and $\mathcal{T}_{2}$ are not equivalent to the $(2;1)$-bridge trisection of $\mathcal{U}$ when $\mathcal{T}\simeq\mathcal{T}_{1}\#\mathcal{T}_{2}$. $\mathcal{T}$ is said to be \textit{irreducible} if $\mathcal{T}$ is not reducible.

The following lemma implies a necessary and sufficient condition of bridge trisections to be reducible or stabilized. You see the definition of c-reducing curves in the next subsection.

\begin{lem}[Lemma 2.10 of \cite{Kirby_Thompson_2}]
Let $\mathcal{T}$ be a bridge trisection of a knotted surface whose spine is $\bigcup_{i\not=j}(B_{ij},\alpha_{ij})$. Then, $\mathcal{T}$ is reducible or stabilized if and only if there exists a c-reducing curve running on the bridge surface $\Sigma$ for each tangle $\alpha_{ij}$.
\label{lem:reducible and stabilized}
\end{lem}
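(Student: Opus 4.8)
The plan is to treat this as a ``Haken-type'' lemma, in the spirit of the classical fact that a Heegaard splitting is reducible or stabilized exactly when some essential curve on the splitting surface bounds a compressing disk on both sides. I would prove the two implications separately.

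For the ``if'' direction, I would start from a curve $c$ on $\Sigma$ that is c-reducing for each of the three tangles, so that $c$ bounds a c-disk $D_{ij}\subset B_{ij}$ (a compressing disk disjoint from $\alpha_{ij}$, or a cut disk meeting $\alpha_{ij}$ in one point) for every pair $i\neq j$. Because $\Sigma$ lies in the boundary of each 3-ball $B_{ij}$, the 2-sphere $D_{ij}\cup_{c}D_{ki}$ lies in $\partial X_{i}$ for each $i$; filling these spheres by 3-balls in the 4-dimensional pieces $X_{1},X_{2},X_{3}$ assembles a 3-sphere $Q\subset S^{4}$ that is transverse to the spine of $\mathcal{T}$, meets $\Sigma$ exactly in $c$, meets each $B_{ij}$ in the single disk $D_{ij}$, and meets $F$ either in the empty set or in an unknotted circle. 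Since $c$ is essential, reading off how the $2b$ punctures of $\Sigma$ are distributed by $c$ shows that $Q$ exhibits $\mathcal{T}$ as a nontrivial distant sum, as a nontrivial connected sum, or as a stabilization, and in each case $\mathcal{T}$ is reducible or stabilized.

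For the ``only if'' direction, I would take the 3-sphere $Q\subset S^{4}$ that witnesses the hypothesis: when $\mathcal{T}\simeq\mathcal{T}_{1}\#\mathcal{T}_{2}$ or $\mathcal{T}\simeq\mathcal{T}_{1}\sqcup\mathcal{T}_{2}$ this is a decomposing 3-sphere produced exactly as in the proof of Lemma~\ref{lem:bridge trisection of distant sum}, and when $\mathcal{T}$ is stabilized it is the 3-sphere carrying the destabilization data. Putting $Q$ transverse to the spine, $\gamma:=Q\cap\Sigma$ is a finite union of simple closed curves in $\Sigma\setminus\mathrm{p}$ and each $Q\cap B_{ij}$ is a properly embedded surface with boundary on $\gamma$. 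The core of the proof is then a simultaneous innermost-disk and outermost-arc simplification of $Q$: using that each $(B_{ij},\alpha_{ij})$ is a \emph{trivial} tangle --- whose exterior is a handlebody in which $\Sigma\setminus\mathrm{p}$ compresses only along c-disks --- one removes every component of $\gamma$ that is inessential in $\Sigma\setminus\mathrm{p}$ or that bounds a c-disk in some $B_{ij}$, all without increasing $|\gamma|$ and without changing the isotopy class of the pair $(Q,Q\cap F)$ in $(S^{4},F)$. When this process terminates, $\gamma$ is a single curve $c$, the piece $Q\cap B_{ij}$ is a single c-disk bounded by $c$ in each tangle, and hence $c$ is the desired c-reducing curve.

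I expect the main obstacle to be exactly this simplification in the ``only if'' direction. The surgeries on $Q$ have to be carried out against all three tangles at once, so each one must be checked not to introduce new intersections with the other two 3-balls; more essentially, a component of $\gamma$ that is essential in $\Sigma\setminus\mathrm{p}$ but bounds a once- or twice-punctured disk in $\Sigma$ cannot be removed by a compression and instead forces the ``stabilized'' alternative, so the dichotomy in the statement is genuinely produced by the case analysis at the end of the reduction. A secondary technical point is that, in the stabilized case, one must first promote the combinatorial destabilization data to an honest 3-sphere in $S^{4}$ --- equivalently, identify the ``eye'' pattern of $c$ on the bridge surface --- before this reduction can even be started.
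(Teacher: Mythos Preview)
The paper does not prove this lemma at all: it is quoted verbatim as ``Lemma~2.10 of \cite{Kirby_Thompson_2}'' and used as a black box in the proof of Lemma~\ref{lem:main lemma 2}. There is therefore no proof in the present paper to compare your proposal against; the correct comparison would be with the argument in Aranda--Pongtanapaisan--Taylor--Zhang~\cite{Kirby_Thompson_2}.

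That said, your outline follows the standard Haken-lemma template and is broadly the right shape for such a statement. The ``if'' direction as you describe it is essentially correct: the three c-disks $D_{12},D_{23},D_{31}$ glue pairwise along $c$ to give c-reducing spheres in each $\partial X_i$, and capping with $3$-balls in the $X_i$ yields a decomposing $3$-sphere; the essentiality of $c$ together with the parity of the punctures it bounds then sorts the result into distant sum, connected sum, or stabilization. For the ``only if'' direction your plan is plausible but, as you yourself flag, the simultaneous innermost-disk/outermost-arc simplification against three tangles at once is where the real work lies, and your sketch does not yet supply the argument that the process terminates with a \emph{single} curve rather than a family; you would also need to be explicit about why, in the stabilized case, the cancelling bridge data produces an honest embedded $3$-sphere to start from. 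If you want a self-contained proof, you should consult \cite{Kirby_Thompson_2} directly and fill in those steps from there.
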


\subsection{Pants complex and dual curve complex}

\quad\, In this subsection, we recall the pants complex $P(\Sigma)$ and the dual curve complex $C^{*}(\Sigma)$ of a punctured 2-sphere $\Sigma=\Sigma_{0,n}$. The $n$ punctured sphere $\Sigma$ is said to be \textit{admissible} if $n\geq4$ holds. Assume that $\Sigma$ is admissible and $c\subset{\Sigma}$ is a simple closed curve on $\Sigma$. The curve $c\subset{\Sigma}$ is said to be \textit{essential} if $c\cap\mathrm{p}=\phi$ and $c$ bounds neither an unpunctured 2-disk in $\Sigma$ nor a once punctured 2-disk in $\Sigma$, where $\mathrm{p}\subset{\Sigma}$ denotes the set of $n$ punctures. 
A compact surface with three boundary components, at most two of which are allowed to be punctures on it, is said to be a \textit{pair of pants} and an image of it is shown in Figure \ref{fig:pair of pants}. A family of essential simple closed curves $\{c_{l}\}_{l\in{L}}$ on $\Sigma$ is said to be a \textit{pants decomposition} of $\Sigma$ if we obtain pairs of pants by cutting $\Sigma$ along all curves $c_{l}$. We refer them up to isotopy relative to punctures on $\Sigma$. It is easy to see that any pants decomposition of $\Sigma=\Sigma_{0,n}$ consists of $n-3$ essential curves.


\begin{figure}[t]
\centering
\includegraphics[width=50mm]{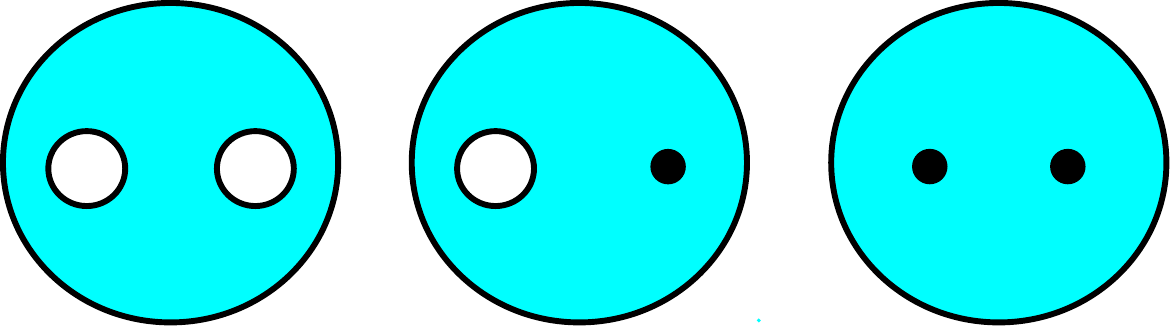}
\caption{An image of a pair of pants.}
\label{fig:pair of pants}
\end{figure}

The \textit{pants complex} $P(\Sigma)$ of an $n$ punctured 2-sphere $\Sigma$ is a 1-complex defined as follows. A \textit{vertex} of $P(\Sigma)$ is a pants decomposition of $\Sigma$. Two vertices $c=\{c_{l}\}_{l},c'=\{c'_{l}\}_{l}$ of $P(\Sigma)$ are connected by a length one \textit{edge} in $P(\Sigma)$ if it follows that $c_{l}=c_{l}'$ for each $l\in{\{1,\cdots,n-4\}}$ and $|c_{n-3}\cap c_{n-3}'|=2$. If vertices $c$ and $c'$ are connected by an edge in $P(\Sigma)$, we say that $c$ is obtained from $c'$ by an \textit{A-move} and let denote this situation as $c\rightarrow c'$. Let $c,c'\in{P(\Sigma)}$ be two vertices. Then, the \textit{distance} $d(c,c')$ between $c$ and $c'$ in $P(\Sigma)$ is the number of length one edges in a shortest path connecting them.

Similar to the pants complex of $\Sigma$, we can define the \textit{dual curve complex} $C^{*}(\Sigma)$ as follows. A \textit{vertex} of $C^{*}(\Sigma)$ is a pants decomposition of $\Sigma$. Two vertices $c=\{c_{l}\}_{l},c'=\{c_{l}'\}_{l}$ of $C^{*}(\Sigma)$ are connected by a length one \textit{edge} in $C^{*}(\Sigma)$ if it follows that $c_{l}=c_{l}'$ for each $l\in{\{1,\cdots,n-4\}}$ and $|c_{n-3}\cap c_{n-3}'|\geq2$. As in the definition of the pants complex of $\Sigma$, $d^{*}(c,c')$ denotes the \textit{distance} between $c$ and $c'$ in $C^{*}(\Sigma)$. We see easily that the 0-skeleton of $P(\Sigma)$ and of $C^{*}(\Sigma)$ are same and the pants complex $P(\Sigma)$ is a subcomplex of the dual curve complex $C^{*}(\Sigma)$ since any two vertices $c,c'\in{P(\Sigma)}$ connected by an edge in $P(\Sigma)$ are also connected by the same edge in $C^{*}(\Sigma)$, following the inequation $d^{*}(c,c')\leq d(c,c')$ for any vertices $c,c'\in{P(\Sigma)}$.

Let $(B,\alpha)$ be a trivial $b$-tangle with $b\geq2$ and $\Sigma$ denotes the $2b$ punctured 2-sphere appearing as the boundary of the given tangle, where we consider $\partial\alpha\subset{\Sigma}$ as the punctures on $\Sigma$. An essential simple closed curve $c\subset{\Sigma}$ is said to be a \textit{compressing curve} if there exists a 2-disk $D_{c}$ properly embedded in $B$ such that $\partial D_{c}=c$ and $D_{c}\cap\alpha=\phi$, and such a 2-disk $D_{c}$ is called a \textit{compressing disk} for $\alpha$. On the other hand, an essential simple closed curve $c\subset{\Sigma}$ is said to be a \textit{cut curve} if there exists a 2-disk $D_{c}$ properly embedded in $B$ such that $\partial D_{c}=c$ and $|D_{c}\cap\alpha|=1$, and such a 2-disk is called a \textit{cut disk} for $\alpha$. It follows easily that any compressing curve bounds an even number of punctures of $\Sigma$ and any cut curve bounds an odd number of punctures of $\Sigma$.


\begin{figure}[t]
\centering
\begin{minipage}[b]{0.49\columnwidth}
\centering
\includegraphics[width=0.9\columnwidth]{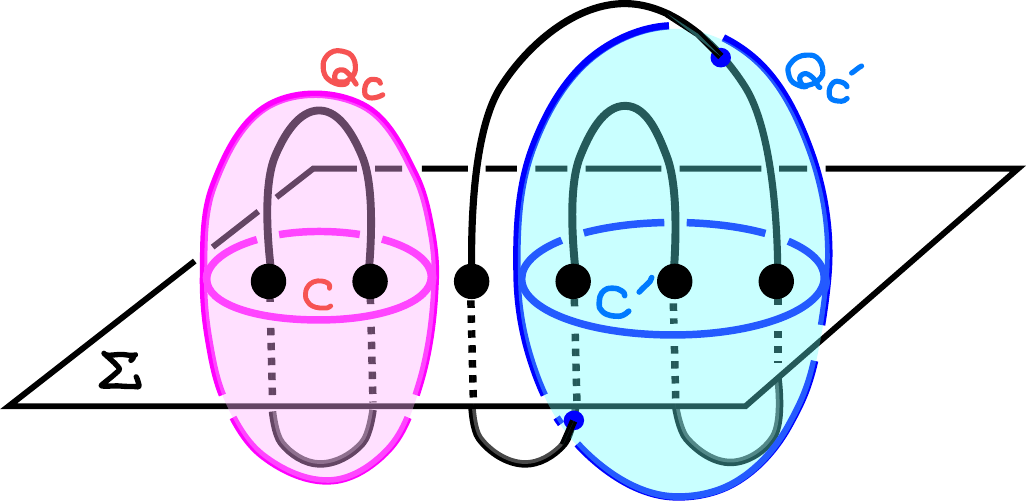}
\caption{An image of c--reducing spheres for the 3--bridge unlink.}
\label{fig:c-spheres}
\end{minipage}
\begin{minipage}[b]{0.49\columnwidth}
\centering
\includegraphics[width=0.9\columnwidth]{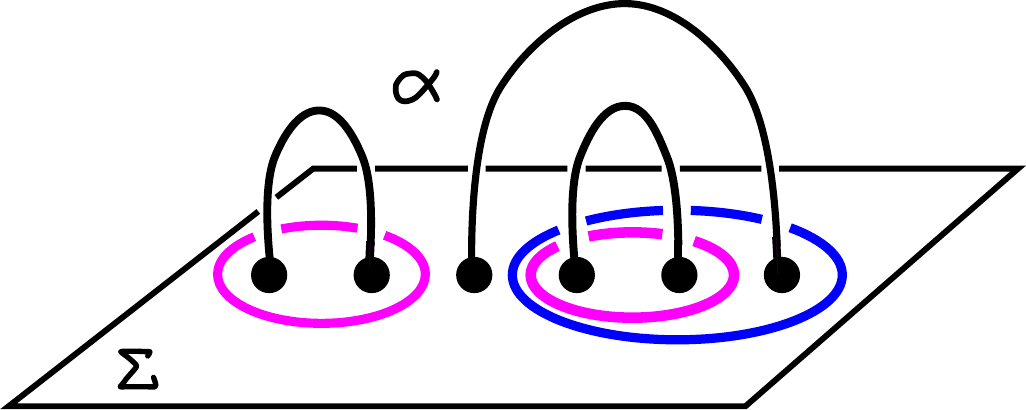}
\caption{An example of an efficient pants decomposition for the trivial 3-tangle.}
\label{fig:efficient pants decomposition}
\end{minipage}
\end{figure}

Let $(S^{3},L)=(B_{+},\alpha_{+})\cup_{\partial}(B_{-},\alpha_{-})$ be a bridge split unlink whose bridge surface is $\Sigma$ and suppose $|L\cap\Sigma|\geq4$. An essential simple closed curve $c\subset{\Sigma}$ is said to be \textit{reducing} if there exists a 2-sphere $Q_{c}$ embedded in $S^{3}$ such that $Q_{c}\cap\Sigma=c$ and $D_{c}^{+}\cap\alpha_{+}=D_{c}^{-}\cap\alpha_{-}=\phi$, where $D_{c}^{\pm}:=Q_{c}\cap B_{\pm}$, and such a 2-sphere is called a \textit{reducing sphere} for the unlink $L$. On the other hand, an essential simple closed curve $c\subset{\Sigma}$ is said to be \textit{cut-reducing} if there exists a 2-sphere $Q_{c}$ embedded in $S^{3}$ such that $Q_{c}\cap\Sigma=c$ and $|D_{c}^{+}\cap\alpha_{+}|=|D_{c}^{-}\cap\alpha_{-}|=1$, where $D_{c}^{\pm}=Q_{c}\cap B_{\pm}$, and such a 2-sphere is called a \textit{cut-reducing sphere} for the unlink $L$ (See Figure \ref{fig:c-spheres}).

Let $(B,\alpha)$ be a trivial $b$-tangle and $c=\{c_{l}\}_{l}$ be a pants decomposition of $2b$ punctured 2-sphere $\Sigma$, where $b\geq4$. If each $c_{l}\in{c}$ is either a compressing curve or a cut curve for $\alpha$, then $c$ is called \textit{efficient} for the tangle $\alpha$ (see Figure \ref{fig:efficient pants decomposition}). It is easy to see that there exist efficient pants decompositions for any trivial $b$-tangle $(B,\alpha)$ with $b\geq2$.

For simplicity, we introduce the following words. Let $\Sigma$ be an $n$ punctured 2-sphere and $c$ be a pants decomposition of $\Sigma$, where $n\geq4$. We consider the surface $\Sigma$ as $\{(x_{1},x_{2},x_{3})\in{\mathbb{R}^{3}}\,|\,x_{1}^{2}+x_{2}^{2}+x_{3}^{2}=1\}$ and $D^{2}_{+}\subset{\Sigma}$ denotes the upper half disk $\{(x_{1},x_{2},x_{3})\in{\Sigma}\,|\,x_{3}\geq0\}$ of $\Sigma$. Let all punctures on $\Sigma$ be inside $D_{+}^{2}$ and, by an appropriate homeomorphism on $\Sigma$ relative to the punctures, assume that all curves of $c$ are inside $D_{+}^{2}$. Let $c_{l}\in{c}$. $S_{c_{l}}^{\text{in}}$ and $S_{c_{l}}^{\text{out}}$ denote the subsurfaces in $D_{+}^{2}$ such that $S_{c_{l}}^{\text{in}}\cup S_{c_{l}}^{\text{out}}=D_{+}^{2}, \partial S_{c_{l}}^{\text{in}}=c_{l}$, and $\partial S_{c_{l}}^{\text{out}}=c_{l}\cup\partial D_{+}^{2}$. Let $A\subset{D_{+}^{2}}$ be a subset satisfying that either $A\subset{\text{int}S_{c_{l}}^{\text{in}}}$ or $A\subset{\text{int}S_{c_{l}}^{\text
{out}}}$ holds. The subset $A$ is \textit{inside} $c_{l}$ or \textit{bounded} by $c_{l}$ if $A\subset{\text{int}S_{c_{l}}^{\text{in}}}$ holds, and if not, $A$ is said to be \textit{outside} $c_{l}$. From the definition of the pants decomposition of $\Sigma$, for each essential simple closed curve $c_{l}\in{c}$, there exist two pairs of pants intersecting along the curve $c_{l}$ such that the interior of one of them is inside $c_{l}$ and that of the other is outside $c_{l}$. Let $\pi_{c_{l}}^{\text{in}}\subset{\Sigma}$ denote the pair of pants such that $c_{l}\subset{\partial\pi_{c_{l}}^{\text{in}}}$ and its interior region is inside $c_{l}$, and let $\pi_{c_{l}}^{\text{out}}\subset{\Sigma}$ denote the pair of pants such that $c_{l}\subset{\partial\pi_{c_{l}}^{\text{in}}}$ and its interior region is outside $c_{l}$. You see images of $\pi_{c_{l}}^{\text{in}}$ and $\pi_{c_{l}}^{\text{out}}$ in Figure \ref{fig:inside and outside of pants}, where the outmost curve represents $\partial D_{+}^{2}$.


\begin{figure}[h]
\center
\includegraphics[width=60mm]{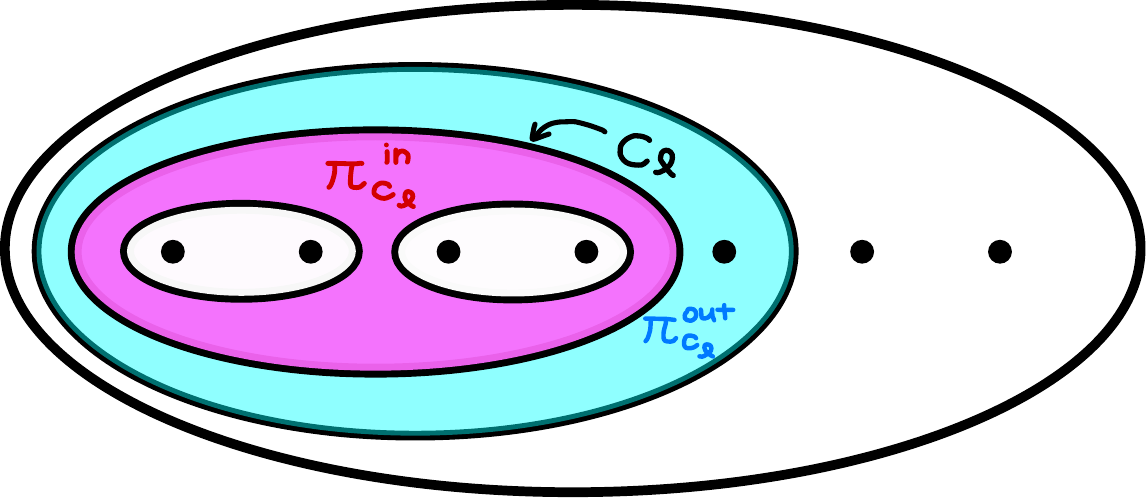}
\caption{Examples of $\pi_{c_{l}}^{\text{in}}$ (the pink region) and $\pi_{c_{l}}^{\text{out}}$ (the blue region).}
\label{fig:inside and outside of pants}
\end{figure}

\section{Kirby-Thompson invariant}

In this section, we review the definition of the $\mathcal{L}$-invariant and the $\mathcal{L}^{*}$-invariant of knotted surfaces and show some simple examples. In Subsection 3.1, we need to recall some specific pants decompositions with respect to given bridge trisections, and introduce these invariants in Subsection 3.2.

\subsection{Some special pants decompositions}

In order to define the $\mathcal{L}$-invariant and the $\mathcal{L}^{*}-$invariant of a knotted surface, we have to define a special pair of pants decompositions of the bridge surfaces of bridge trisections of the surface, which plays a key role when defining or discussing these invariants.

Let $(S^{3},L)=(B_{+},\alpha_{+})\cup(B_{-},\alpha_{-})$ be a $b$-bridge split unlink, and $\Sigma$ denotes the bridge surface of the splitting, considering it as a $2b$ punctured sphere with $b\geq2$. A pants decomposition $p_{+}\in{P(\Sigma)}$ is said to be \textit{efficient} for the tangle $\alpha_{+}$ if every essential simple closed curve $c_{l}\in{p_{+}}$ is either compressing or cut for $\alpha_{+}$, and $P_{c}(\alpha_{+})\subset{P(\Sigma)}$ denotes the set of efficient pants decompositions for the tangle $\alpha_{+}$. A pair of pants decompositions $(p_{+},p_{-})\in{P_{c}(\alpha_{+})\times P_{c}(\alpha_{-})}$ is called an \textit{efficient defining pair} for the unlink $L$ with the bridge surface $\Sigma$ if $d(p_{+},p_{-})=d(P_{c}(\alpha_{+}),P_{c}(\alpha_{-}))$ holds, that is, an efficient defining pair of $L$ is a pair of efficient pants decompositions for the upper tangle and the lower tangle whose pants distance in $P(\Sigma)$ is minimal. The following lemma shows some important features of efficient defining pairs of bridge split unlink whose bridge number is greater than or equal to two.

\begin{lem}[Lemma 3.4 of \cite{Kirby_Thompson_3}]
Let $(p_{+},p_{-})\in{P_{c}(\alpha_{+})\times P_{c}(\alpha_{-})}$ be an efficient defining pair for a $b$-bridge split unlink $(S^{3},L)=(B_{+},\alpha_{+})\cup(B_{-},\alpha_{-})$ with $b\geq2$. Let $c=|L|$. We can decompose each pants decomposition as $p_{+}=\psi\cup g_{+}$ and $p_{-}=\psi\cup g_{-}$ so that the following properties hold.
\begin{itemize}
\item[(i)] $|g_{+}|=|g_{-}|=b-c$ and $|\psi|=b+c-3$.
\item[(ii)] For any shortest path $p_{+}\rightarrow p_{-}$ realizing the distance $d(p_{+},p_{-})$, every curve in $g_{+}$ is moved to some essential curve in $g_{-}$ by an A-move.
\item[(iii)] For any curves $c\in{g_{+}}$ and $c'\in{g_{-}}$, $|c\cap c'|\leq2$ holds.
\item[(iv)] For any curve $c\in{g_{+}}$, there exists $c'\in{g_{-}}$ uniquely such that $|c\cap c'|=2$.
\end{itemize}
\label{lem:efficient defining pair 1}
\end{lem}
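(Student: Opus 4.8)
The plan is to pass to a maximal system of curves common to $p_{+}$ and $p_{-}$, identify the leftover regions as disjoint four-holed spheres, and read off (i)--(iv) from that local picture. First I would fix a model of the $b$-bridge splitting of the $c$-component unlink $L$ and record the dichotomy for the curves of an efficient pants decomposition: the compressing disk of a compressing curve is disjoint from the tangle, so a compressing curve for $\alpha_{\pm}$ bounds in $\Sigma$ a union of complete strand-endpoint pairs, while a cut curve bounds such a union together with exactly one additional puncture. Setting $\psi:=p_{+}\cap p_{-}$ and $g_{\pm}:=p_{\pm}\setminus\psi$, the identities $|\psi|+|g_{+}|=|\psi|+|g_{-}|=2b-3$ give $|g_{+}|=|g_{-}|$ for free, so (i) reduces to the single claim $|g_{+}|=b-c$.

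For the upper bound I would exhibit a pair of efficient pants decompositions at distance at most $b-c$, obtained by iteratively splitting off the $c$ components and simplifying each perturbed component (via Lemma \ref{lem:reducible and stabilized} and Lemma \ref{lem:bridge trisection of distant sum}) into a common subsystem $\psi_{0}$ together with $b-c$ disjoint four-holed remnants, on each of which the chosen upper and lower efficient curves bound two-punctured disks sharing one puncture, hence meet in two points and differ by a single $A$-move. Since $(p_{+},p_{-})$ minimizes the distance over $P_{c}(\alpha_{+})\times P_{c}(\alpha_{-})$, this yields $d(p_{+},p_{-})\le b-c$; and since distinct curves of $g_{+}$ are destroyed at distinct steps of any geodesic, $|g_{+}|\le d(p_{+},p_{-})\le b-c$. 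The reverse inequality is the crux: \emph{any} family of disjoint essential curves each simultaneously compressing-or-cut for both $\alpha_{+}$ and $\alpha_{-}$ has at most $b+c-3$ members — each such curve bounds a reducing or cut-reducing sphere, cutting along a maximal family realizes $L$ as an iterated distant and connected sum of standard pieces, and the minimal bridge number $c$ of the $c$-component unlink leaves at least $b-c$ irreducible four-holed remnants carrying no curve efficient for both sub-tangles. As $\psi$ is such a family, $|\psi|\le b+c-3$; with the upper bound it follows that $|g_{+}|=b-c$ and $|\psi|=b+c-3$, which is (i).

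Statements (ii)--(iv) follow by analyzing a shortest path $p_{+}\to p_{-}$. By the equalities just established the path has length exactly $b-c$, every curve of $g_{+}$ is altered exactly once, and no curve of $\psi$ is ever altered; hence the single $A$-move applied to a curve $c\in g_{+}$ replaces it by a curve of $p_{-}\setminus\psi=g_{-}$, which is (ii), and since an $A$-move requires geometric intersection number two, that image $c'\in g_{-}$ satisfies $|c\cap c'|=2$. Finally, because the $b-c$ four-holed remnants are pairwise disjoint, a curve of $g_{+}$ and a curve of $g_{-}$ supported in different remnants are disjoint, giving $|c\cap c'|\le 2$ for all $c\in g_{+}$ and $c'\in g_{-}$ (this is (iii)) and the uniqueness of the partner $c'$ with $|c\cap c'|=2$ (this is (iv)).

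The genuinely delicate part is the rigidity underlying the lower bound: controlling efficient pants decompositions up to isotopy — so that curves common to $p_{+}$ and $p_{-}$ can really be made to coincide, using innermost-disk and outermost-arc surgeries on the compressing and cut disks — and then pinning down the geodesic in $P(\Sigma)$, whose metric is in general intractable. The leverage is that both endpoints are efficient and efficiency survives the relevant surgeries, which forces the geodesic into the localized form above; making ``at most $b+c-3$'' precise, in particular ruling out larger remnants by producing a curve simultaneously efficient for both sub-tangles inside any remnant with more than four holes, is the technical heart.
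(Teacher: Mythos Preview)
The paper does not prove this lemma at all: it is quoted as Lemma~3.4 of \cite{Kirby_Thompson_3} and used as a black box, so there is no argument in the paper to compare your attempt against. Your outline is in fact a reasonable reconstruction of how the result is established in that reference.

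Two remarks on the details. First, your appeals to Lemma~\ref{lem:bridge trisection of distant sum} and Lemma~\ref{lem:reducible and stabilized} are misplaced: those are statements about bridge \emph{trisections} of surfaces in $S^{4}$, whereas what you actually need for the upper bound $d(p_{+},p_{-})\le b-c$ is the classical $3$--dimensional fact that every $b$--bridge splitting of the $c$--component unlink is a $(b-c)$--fold perturbation of the standard $c$--bridge one. Second, your deduction of (iii) and (iv) genuinely requires that every component of $\Sigma\setminus\psi$ is a three-- or four--holed sphere, and the counting alone does not give this: an Euler-characteristic computation shows only that $\Sigma\setminus\psi$ has $b+c-2$ components with $\sum_i(b_i-3)=b-c$, which permits components with $b_i\ge 5$. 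You correctly flag this as ``the technical heart'' and propose the right mechanism---inside any component with five or more boundary circles the induced sub-splitting is stabilized, so it carries a curve that is compressing-or-cut for both tangles, contradicting maximality of $\psi=p_{+}\cap p_{-}$---but be aware that this step, not the numerics, is where all the work lies, and your sketch stops just short of carrying it out.
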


Let $(S^{4},F)$ be a knotted surface and $\mathcal{T}$ be an unstabilized $(b;c_{1},c_{2},c_{3})$-bridge trisection of $F$. Let $\bigcup_{i\not=j}(B_{ij},\alpha_{ij})$ be the spine of $\mathcal{T}$ and $\Sigma$ the bridge surface of $\mathcal{T}$. As in Subsection 2.2, we consider the surface $\Sigma$ as a $2b$ punctured 2-sphere. We note that $d^{*}(p_{ij}^{i},p_{ki}^{i})=b-c_{i}$ holds.

\begin{dfn}[$\mathcal{L}$-invariant]
We define the \textit{$\mathcal{L}$-invariant} $\mathcal{L}(\mathcal{T})$ of the bridge trisection $\mathcal{T}$ as follows. If $\mathcal{T}$ is the $(2;1)$-bridge trisection of $\mathcal{U}$, then we define $\mathcal{L}(\mathcal{T}):=0$. If not, we define
\[
\mathcal{L}(\mathcal{T}):=\text{min}\left\{\sum_{i\not=j}d(p_{ij}^{i},p_{ij}^{j})\middle|(p_{ij}^{i},p_{ki}^{i})\text{ is an efficient defining pair for }\partial\mathcal{D}_{i}.\right\}.
\]
The \textit{$\mathcal{L}$-invariant} $\mathcal{L}(F)$ of the surface $F$ is defined as the minimum value of $\mathcal{L}(\mathcal{T})$ over all unstabilized bridge trisections $\mathcal{T}$ of $F$. If we use the distance function $d^{*}$ instead of $d$ in the definition of the invariant $\mathcal{T}$, we obtain the other invariant $\mathcal{L}^{*}(\mathcal{T})$ and $\mathcal{L}^{*}(F)$ by the same way, and we call them the \textit{$\mathcal{L}^{*}$-invariant}. Note that we also call these invariants the \textit{Kirby-Thompson invariants}. It is easy to see that $\mathcal{L}\geq\mathcal{L}^{*}\geq0$.

\begin{figure}[ht]
\center
\includegraphics[width=55mm]{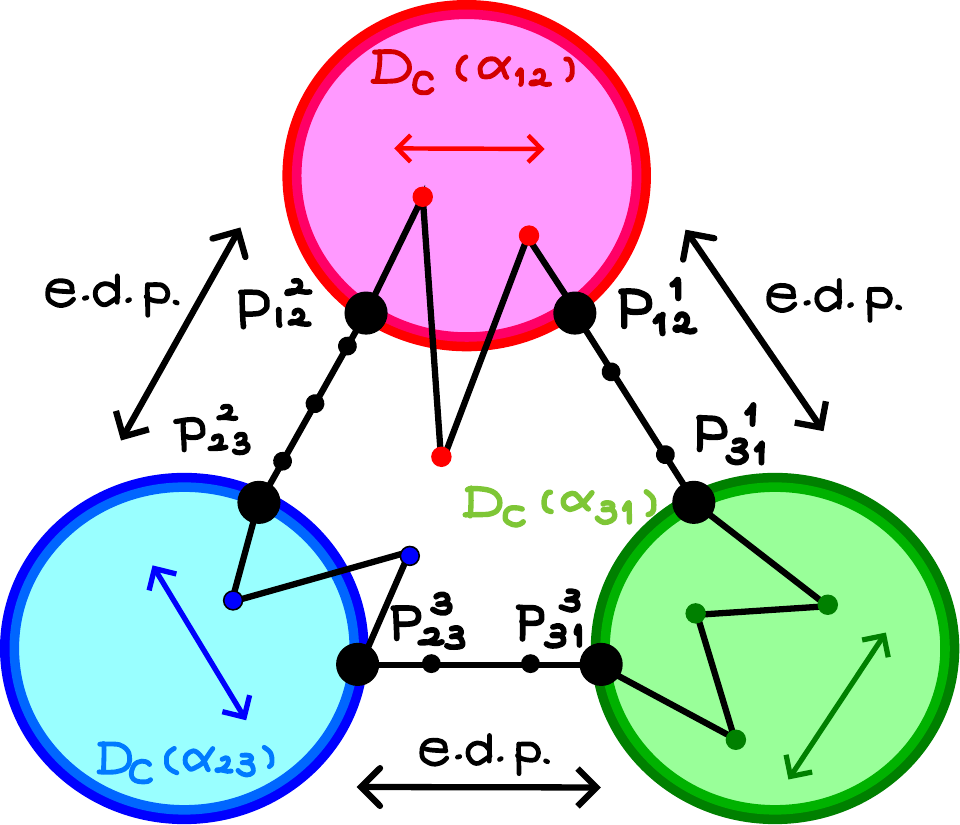}
\caption{An image of the Kirby-Thompson invariant.}
\end{figure}

\end{dfn}


\begin{ex}
We can calculate upper bounds for the $\mathcal{L}$-invariant and the $\mathcal{L}^{*}$-invariant of the standard unknotted surfaces. Figure \ref{fig:efficient defining pairs of standard surfaces} shows that $\mathcal{L}(F)\leq0, \mathcal{L}^{*}(F)\leq0$, hence $\mathcal{L}(F)=\mathcal{L}^{*}(F)=0$ holds for any standard surfaces $F\not=\mathcal{T}$. On the other hand, from the bottom figure in Figure \ref{fig:efficient defining pairs of standard surfaces}, we see $\mathcal{L}(\mathcal{T})\leq3,\mathcal{L}^{*}(\mathcal{T})\leq3$. Moreover, from Example 5.4 of \cite{Kirby_Thompson_3}, $\mathcal{L}(\mathcal{T})=\mathcal{L}^{*}(\mathcal{T})=3$ holds.\\

\begin{figure}[h]
\center
\includegraphics[width=77mm]{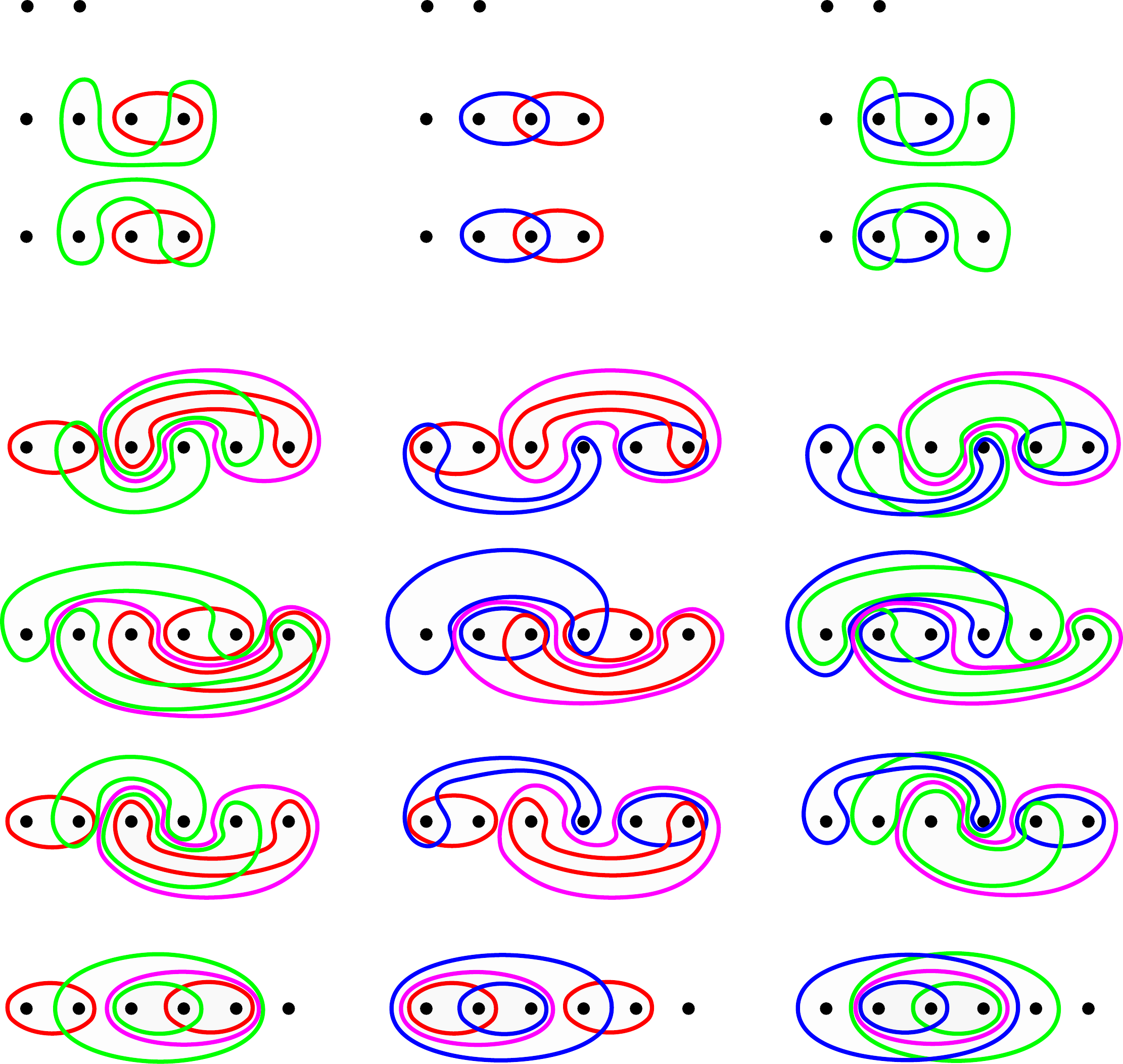}
\caption{Efficient defining pairs for the unstabilized bridge trisections of the standard surfaces in Figure \ref{fig:triplane diagrams of standard surfaces}, from the top to the bottom, representing efficient pairs for $\mathcal{U},\mathcal{P}_{+},\mathcal{P}_{-},\mathcal{K}_{2,0},\mathcal{K}_{1,1},\mathcal{K}_{0,2}$, and $\mathcal{T}$, where the pink curves represent two common curves.}
\label{fig:efficient defining pairs of standard surfaces}
\end{figure}

\label{ex:Kirby-Thompson of standard surfaces}
\end{ex}


\section{Main theorem}

In this section, we prove Theorem \ref{thm:main theorem}, where we determine the Kirby-Thompson invariants of distant sums of finite standard unknotted surfaces including the standard tori. We prepare some lemmas and prove the main theorem in Subsection 4.1. Finally, in Subsection 4.2, we see some figures used to improve upper bounds of these invariants.

\subsection{Proof of the main theorem}

\begin{thm}
If $F$ is a finite distant sum of standard unknotted surfaces in $S^{4}$ and $n(F)$ is the number of standard unknotted tori included in $F$, then $\mathcal{L}(F)=\mathcal{L}^{*}(F)=3n(F).$
\label{thm:main theorem}
\end{thm}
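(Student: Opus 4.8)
The plan is to establish the two bounds $\mathcal{L}(F)\le 3n(F)$ and $\mathcal{L}^{*}(F)\ge 3n(F)$; since $\mathcal{L}\ge\mathcal{L}^{*}\ge 0$ always holds, these two inequalities pin both invariants to $3n(F)$. Write $F=S_{1}\sqcup\cdots\sqcup S_{m}$ with each $S_{j}$ a standard unknotted surface. The technical heart throughout is the behaviour of a distant sum $\mathcal{T}_{1}\sqcup\mathcal{T}_{2}$ near the gluing: its bridge surface is a connected sum $\Sigma_{1}\#\Sigma_{2}$ along small disks disjoint from all punctures, and the connecting curve $\gamma\subset\Sigma$ separates the punctures of $\mathcal{T}_{1}$ from those of $\mathcal{T}_{2}$; since the distant-sum point is disjoint from the surface, $\gamma$ bounds a compressing disk for $\alpha_{ij}$ inside each ball $B_{ij}$ of the spine, hence is a reducing curve for every unlink $\partial\mathcal{D}_{i}$ (so $\mathcal{T}_{1}\sqcup\mathcal{T}_{2}$ is reducible, in accordance with Lemma~\ref{lem:reducible and stabilized}).

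For the upper bound, let $\mathcal{T}_{j}$ be the standard bridge trisection of $S_{j}$ and set $\mathcal{T}:=\mathcal{T}_{1}\sqcup\cdots\sqcup\mathcal{T}_{m}$, which is unstabilized by (the construction in) Lemma~\ref{lem:bridge trisection of distant sum}. On each summand $\Sigma_{j}$ take the efficient defining pairs exhibited in Figure~\ref{fig:efficient defining pairs of standard surfaces}; their union on $\Sigma$, together with the $m-1$ connecting curves, is a pants decomposition of $\Sigma$ that is efficient for every tangle (efficiency is local to each $\Sigma_{j}$, and each connecting curve is compressing for every tangle), is an efficient defining pair for each $\partial\mathcal{D}_{i}$, and has the property that an $A$-move performed inside one summand never moves a curve of another summand or a connecting curve. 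Hence the three relevant pants distances add over the summands, giving $\mathcal{L}(\mathcal{T})\le\sum_{j}\mathcal{L}(\mathcal{T}_{j})=\sum_{j}\mathcal{L}(S_{j})=3n(F)$ by Example~\ref{ex:Kirby-Thompson of standard surfaces}; thus $\mathcal{L}(F)\le 3n(F)$ and a fortiori $\mathcal{L}^{*}(F)\le 3n(F)$. (Verifying that the displayed decomposition really is an \emph{efficient defining pair} --- i.e. that the minimal efficient distance on $\Sigma$ does not drop below the sum of the ones on the $\Sigma_{j}$ --- uses the same splitting argument as the lower bound below.)

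For the lower bound it suffices, by $\mathcal{L}\ge\mathcal{L}^{*}$, to prove $\mathcal{L}^{*}(F)\ge 3n(F)$. Let $\mathcal{T}$ be any unstabilized bridge trisection of $F$; by Lemma~\ref{lem:bridge trisection of distant sum}, $\mathcal{T}\simeq\mathcal{T}_{1}\sqcup\cdots\sqcup\mathcal{T}_{m}$ with each $\mathcal{T}_{j}$ an unstabilized bridge trisection of $S_{j}$. The claim is that $\mathcal{L}^{*}$ is superadditive under distant sum, $\mathcal{L}^{*}(\mathcal{T}_{1}\sqcup\mathcal{T}_{2})\ge\mathcal{L}^{*}(\mathcal{T}_{1})+\mathcal{L}^{*}(\mathcal{T}_{2})$ (immediate from $\mathcal{L}^{*}\ge 0$ if one factor is the $(2;1)$-trisection of $\mathcal{U}$). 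Granting it and inducting, $\mathcal{L}^{*}(\mathcal{T})\ge\sum_{j}\mathcal{L}^{*}(\mathcal{T}_{j})\ge\sum_{j}\mathcal{L}^{*}(S_{j})$, the last step because $\mathcal{L}^{*}(S_{j})$ is by definition the minimum of $\mathcal{L}^{*}$ over unstabilized bridge trisections of $S_{j}$; by Example~\ref{ex:Kirby-Thompson of standard surfaces} a standard torus contributes $3$ and every other standard surface contributes $0$, so $\sum_{j}\mathcal{L}^{*}(S_{j})=3n(F)$. Taking the minimum over all $\mathcal{T}$ yields $\mathcal{L}^{*}(F)\ge 3n(F)$, and the theorem follows.

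It remains to prove the superadditivity, which is the main obstacle. Fix efficient defining pairs $(p_{ij}^{i},p_{ki}^{i})$ realizing $\mathcal{L}^{*}(\mathcal{T}_{1}\sqcup\mathcal{T}_{2})$. Using the structure of efficient defining pairs in Lemma~\ref{lem:efficient defining pair 1} together with an innermost-curve and surgery argument against the compressing disks of the connecting curve $\gamma$, one shows these pairs may be replaced --- without increasing any of the dual distances $d^{*}(p_{ij}^{i},p_{ij}^{j})$ and without losing the efficient-defining-pair property --- by ones for which $\gamma$ is a common curve of all six decompositions. Cutting $\Sigma$ along $\gamma$ then exhibits each $p_{ij}^{i}$ as $\{\gamma\}$ together with pants decompositions of the two capped pieces $\Sigma_{1},\Sigma_{2}$, and these restrict to efficient defining pairs for $\mathcal{T}_{1}$ and $\mathcal{T}_{2}$. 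Finally one argues that a $C^{*}(\Sigma)$-geodesic between two decompositions containing $\gamma$ may be assumed to keep $\gamma$ fixed throughout (cleaning it up via the reducing disks of $\gamma$) and therefore projects to $C^{*}(\Sigma_{1})$- and $C^{*}(\Sigma_{2})$-paths whose lengths sum to at most its own; hence $d^{*}(p_{ij}^{i},p_{ij}^{j})$ is at least the sum of the corresponding dual distances on $\Sigma_{1}$ and $\Sigma_{2}$, and summing over the three tangles and minimizing gives $\mathcal{L}^{*}(\mathcal{T}_{1}\sqcup\mathcal{T}_{2})\ge\mathcal{L}^{*}(\mathcal{T}_{1})+\mathcal{L}^{*}(\mathcal{T}_{2})$. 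The delicate points are exactly the two reductions --- arranging the optimal efficient defining pairs to contain $\gamma$, and arranging the optimal paths to fix $\gamma$ --- each requiring careful use of Lemma~\ref{lem:efficient defining pair 1} and of surgery on curves along the reducing disks of $\gamma$, together with the bookkeeping that the minimal efficient distances themselves split additively over $\Sigma_{1}$ and $\Sigma_{2}$.
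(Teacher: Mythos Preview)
Your upper-bound argument is essentially the paper's: take the distant sum of the standard trisections, combine the efficient defining pairs from Figure~\ref{fig:efficient defining pairs of standard surfaces} together with the connecting reducing curves, and observe that the distances add. (Your parenthetical worry that the combined pair might fail to be an \emph{efficient} defining pair is unnecessary: since $d^{*}(P_{c}(\alpha_{ij}),P_{c}(\alpha_{ki}))=b-c_{i}$ always, and bridge numbers and component counts add under distant sum, the combined pair visibly realizes the minimum; no splitting argument is needed here.)

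The lower bound is where your proposal and the paper genuinely diverge, and where your argument has a real gap. You reduce everything to the superadditivity $\mathcal{L}^{*}(\mathcal{T}_{1}\sqcup\mathcal{T}_{2})\ge\mathcal{L}^{*}(\mathcal{T}_{1})+\mathcal{L}^{*}(\mathcal{T}_{2})$, and you correctly identify the two delicate steps --- arranging the optimal efficient defining pairs to contain the connecting curve $\gamma$, and arranging the optimal $C^{*}$-geodesics to fix $\gamma$ --- but you do not prove either one. ``Innermost-curve and surgery'' gestures are not enough here: surgering a curve of a pants decomposition along $\gamma$ can destroy the pants-decomposition or efficiency conditions, and projecting a $C^{*}(\Sigma)$-geodesic to the factor surfaces need not yield geodesics (or even paths of the same total length) without a quasi-isometric splitting statement for $C^{*}$ along a separating curve, which you have not established. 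In fact, general superadditivity of $\mathcal{L}^{*}$ under distant sum is exactly the open Problem the paper poses immediately after the corollary, so you are implicitly claiming something the authors do not know how to prove.

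The paper avoids this entirely. Instead of splitting along $\gamma$, it works globally with an \emph{arbitrary} efficient defining pair $(p_{ij}^{i},p_{ki}^{i})$ and proves (Lemmas~\ref{lem:efficient defining pair 2}--\ref{lem:efficient defining pair 7} culminating in Lemma~\ref{lem:main lemma 1}) that for every torus component $T_{l}$ the common part $\psi_{i}=p_{ij}^{i}\cap p_{ki}^{i}$ must contain a cut-reducing curve $\gamma_{l}^{i}$ bounding exactly three punctures of $T_{l}\cap\Sigma$. Then (Lemma~\ref{lem:main lemma 2}) each such $\gamma_{l}^{i}$ must be moved along any geodesic $p_{ij}^{i}\to p_{ij}^{j}$, for otherwise it would be c-reducing for all three tangles of the induced trisection of $T_{l}$, contradicting irreducibility and unstabilizedness of the standard torus trisection via Lemma~\ref{lem:reducible and stabilized}. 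This gives $d^{*}(p_{ij}^{i},p_{ij}^{j})\ge n(F)$ for each of the three edges, hence $\mathcal{L}^{*}(\mathcal{T})\ge 3n(F)$. The argument is tailored to standard surfaces (specifically to the torus having bridge number three) and does not pass through any additivity statement.
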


\begin{proof}
From Lemma \ref{lem:upper bound}, it is sufficient to prove $\mathcal{L}^{*}(\mathcal{T})\geq3n(F)$ for any unstabilized bridge trisection $\mathcal{T}$ of $F$. If $n(F)=0$, then the above equalities hold from Figure \ref{fig:efficient1}, Figure \ref{fig:efficient2}, and Figure \ref{fig:efficient3} in Subsection 4.2. When $n(F)>0$, Lemma \ref{lem:main lemma 2} implies that $\sum_{i\not=j}d^{*}(p_{ij}^{i},p_{ki}^{i})\geq3n(F)$ holds for any unstabilized bridge trisection $\mathcal{T}$ of $F$ and efficient defining pair $(p_{ij}^{i},p_{ki}^{i})$ for the bridge trisection, completing the proof of the theorem.
\end{proof}

From Example \ref{ex:Kirby-Thompson of standard surfaces} and Theorem \ref{thm:main theorem}, the following corollary holds.

\begin{cor}
The Kirby-Thompson invariants of knotted surfaces are additive with respect to the distant sum operations between standard surfaces. In other words, the following equations hold, where each $F_{i}$ is standard:
\[
\mathcal{L}\left(\coprod_{i=1}^{n}F_{i}\right)=\sum_{i=1}^{n}\mathcal{L}(F_{i}),\,\mathcal{L}^{*}\left(\coprod_{i=1}^{n}F_{i}\right)=\sum_{i=1}^{n}\mathcal{L}^{*}(F_{i}).
\]
\end{cor}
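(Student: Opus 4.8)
The plan is to read the corollary off from Theorem \ref{thm:main theorem} and Example \ref{ex:Kirby-Thompson of standard surfaces}, the only genuinely new remark being the additivity of the torus count under distant sum. First I would record that if $F_{1},\dots,F_{n}$ are standard unknotted surfaces then so is $\coprod_{i=1}^{n}F_{i}$, and moreover $n(\coprod_{i=1}^{n}F_{i})=\sum_{i=1}^{n}n(F_{i})$. This is immediate: disjoint decomposing $3$-spheres separate the summands $F_{i}$ from one another, and each summand contributes $n(F_{i})\in\{0,1\}$ standard tori to the total count, with $n(F_{i})=1$ precisely when $F_{i}\simeq\mathcal{T}$.

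Next I would invoke Theorem \ref{thm:main theorem} twice. Applied to the distant sum itself, it gives $\mathcal{L}\!\left(\coprod_{i=1}^{n}F_{i}\right)=\mathcal{L}^{*}\!\left(\coprod_{i=1}^{n}F_{i}\right)=3\,n\!\left(\coprod_{i=1}^{n}F_{i}\right)$, since $\coprod_{i=1}^{n}F_{i}$ is a finite distant sum of standard unknotted surfaces. Applied to a single standard summand $F_{i}$, viewed as a one-term distant sum (or, more elementarily, using Example \ref{ex:Kirby-Thompson of standard surfaces} directly), it gives $\mathcal{L}(F_{i})=\mathcal{L}^{*}(F_{i})=3\,n(F_{i})$ for every $i$. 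Combining these identities with the additivity of $n$ from the previous step yields
\[
\mathcal{L}\!\left(\coprod_{i=1}^{n}F_{i}\right)=3\,n\!\left(\coprod_{i=1}^{n}F_{i}\right)=\sum_{i=1}^{n}3\,n(F_{i})=\sum_{i=1}^{n}\mathcal{L}(F_{i}),
\]
and the same chain of equalities verbatim with $\mathcal{L}$ replaced throughout by $\mathcal{L}^{*}$.

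I do not expect any real obstacle: all of the substantive work has already been carried out in Theorem \ref{thm:main theorem}, whose nontrivial half is the lower bound $\mathcal{L}^{*}(\mathcal{T})\geq 3n(F)$ for every unstabilized bridge trisection $\mathcal{T}$ of $F$, together with the upper-bound computations of Example \ref{ex:Kirby-Thompson of standard surfaces} and Subsection 4.2. The proof of the corollary is thus a short bookkeeping argument; the only points deserving explicit mention are that the hypothesis of Theorem \ref{thm:main theorem} covers a single standard surface as a trivial distant sum, and that $n$ counts torus summands additively across the separating spheres.
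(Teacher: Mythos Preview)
Your argument is correct and matches the paper's own justification: the paper simply states that the corollary follows from Example \ref{ex:Kirby-Thompson of standard surfaces} and Theorem \ref{thm:main theorem}, and your proof spells out exactly that deduction via the additivity $n(\coprod_{i}F_{i})=\sum_{i}n(F_{i})$.
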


\begin{prob}
Are the Kirby-Thompson invariants of knotted surfaces additive with respect to the distant sum operations between any knotted surfaces?
\end{prob}

In order to prove Theorem \ref{thm:main theorem}, we prepare some lemmas. Let $L=\alpha_{+}\cup_{\Sigma}\alpha_{-}$ be a $b$-bridge split unlink with $b\geq2$ and $\Sigma$ denotes the bridge surface.

\begin{lem}
Let $(p_{+},p_{-})\in{P_{c}(\alpha_{+})\times P_{c}(\alpha_{-})}$ be an efficient defining pair for the bridge split unlink $L$. We write $p_{+}=\psi\cup g_{+}$ and $p_{-}=\psi\cup g_{-}$ as in Lemma \ref{lem:efficient defining pair 1} so that they satisfy all properties in the lemma. Then, any essential curve in $g_{\pm}$ bounds an even number of punctures of $L\cap\Sigma$.
\label{lem:efficient defining pair 2}
\end{lem}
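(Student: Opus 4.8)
The plan is to argue that the curves in $g_{\pm}$ must be compressing curves (not cut curves) for their respective tangles, and then invoke the fact recorded in Subsection 2.2 that any compressing curve bounds an even number of punctures. Recall from Lemma \ref{lem:efficient defining pair 1} that $|g_{+}|=|g_{-}|=b-c$, that the curves of $\psi$ are common to $p_{+}$ and $p_{-}$, and that each curve of $g_{+}$ is carried by a single $A$-move to a unique curve of $g_{-}$ that it meets in exactly two points, all other intersections among $g_{+}$ and $g_{-}$ being empty. So the curves of $g_{+}$ and $g_{-}$ are paired off into $b-c$ ``dual pairs'' $(c,c')$ with $|c\cap c'|=2$, and each such pair lives in a four-punctured sphere component of the complement of $\psi$ (the component is four-punctured because $c$ and $c'$ fill it, two disjoint essential curves meeting twice forcing the ambient piece to be $\Sigma_{0,4}$).

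First I would set up this local picture: cut $\Sigma$ along $\psi$ and focus on one four-punctured-sphere piece $\Sigma'$ containing a dual pair $(c,c')$, where $c$ is a curve of $p_{+}$ (so compressing or cut for $\alpha_{+}$) and $c'$ is a curve of $p_{-}$ (so compressing or cut for $\alpha_{-}$). Since $p_{+}$ is efficient, $c$ is either compressing or cut for $\alpha_{+}$; similarly for $c'$ and $\alpha_{-}$. I would then show the ``cut'' option is impossible. Suppose $c$ is a cut curve for $\alpha_{+}$, so it bounds a disk $D$ with $|D\cap\alpha_{+}|=1$; since $c$ is essential it bounds an odd number $\geq 3$ of punctures on the appropriate side in $\Sigma$, but because $c$ separates a four-punctured sphere piece $\Sigma'$ with exactly two punctures (hence bounds exactly two punctures inside $\Sigma'$) together with whatever $\psi$-bounded regions lie outside, one checks the parity/count of punctures $c$ encloses is governed by $\Sigma'$; I would use this to derive that $c$ must enclose an \emph{even} number of punctures, contradicting the cut-curve parity. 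The cleanest way is: the strand of $\alpha_{+}$ that $D$ meets has both its endpoints (punctures) on the same side of $c$ — impossible if $c$ bounds exactly one puncture of a strand — together with the observation that the other curve $c'$ of the dual pair, meeting $c$ twice, would have to interact with $D$ in a way incompatible with $c'$ also being essential and compressing-or-cut for $\alpha_{-}$.

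The main obstacle, and the step I expect to require the most care, is ruling out the cut-curve case cleanly: a priori a curve of $g_{+}$ could be a cut curve, and I must use the \emph{minimality} of the efficient defining pair — not just efficiency — to exclude it. The idea is that if $c\in g_{+}$ were a cut curve, one could find an alternative efficient pants decomposition for $\alpha_{+}$ agreeing with $p_{-}$ on this piece (replacing $c$ by $c'$), lowering or not increasing the distance while changing the structure in a way that contradicts property (ii) of Lemma \ref{lem:efficient defining pair 1} (that every curve of $g_{+}$ is genuinely $A$-moved). Concretely: a cut curve and a compressing curve in $\Sigma_{0,4}$ that meet minimally cannot be related by a single $A$-move while both remaining essential of the required type, because an $A$-move in $\Sigma_{0,4}$ preserves which pair of punctures is grouped, so it would send a cut curve (odd puncture count) to a cut curve, whereas $c'$, being compressing or cut for $\alpha_-$ and dual to $c$, must have the opposite parity. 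Hence both $c$ and $c'$ have even puncture count, i.e. are compressing curves, and the stated conclusion follows; once this local statement is established, summing over all $b-c$ dual pairs gives the lemma for every curve of $g_{\pm}$.
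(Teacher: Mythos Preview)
Your proposal has the right instinct that minimality of the efficient defining pair is what forces curves of $g_{\pm}$ to be compressing, but the argument you give for the ``main obstacle'' does not go through. The claim that ``an $A$-move in $\Sigma_{0,4}$ preserves which pair of punctures is grouped'' is the opposite of the truth---an $A$-move is precisely a regrouping---and the parity assertion built on it fails: if the four boundary components of the $\Sigma_{0,4}$ piece enclose puncture counts of parities (odd, odd, even, even), then one essential curve in the piece encloses an even total while an $A$-move of it encloses an odd total. There is likewise no reason the partner curve $c'$ ``must have the opposite parity'' to $c$; both could be cut curves. Your earlier attempt (``the strand of $\alpha_{+}$ that $D$ meets has both its endpoints on the same side of $c$'') is also off: a strand meeting a cut disk once has its endpoints on \emph{opposite} sides.

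The missing ingredient is a nontrivial fact about bridge splittings of \emph{unlinks}, namely Lemma~5.10 of \cite{Kirby_Thompson_1}: any curve in an efficient pants decomposition that bounds an odd number of punctures is actually cut-\emph{reducing}, i.e., bounds a cut disk on \emph{both} sides. This does not follow from the local $\Sigma_{0,4}$ picture; it uses the global structure of the unlink. Once you have it, the minimality argument you sketch becomes legitimate: if $\gamma\in g_{+}$ bounded an odd number of punctures it would be cut-reducing, hence already a cut curve for $\alpha_{-}$, and replacing its $A$-move partner in $p_{-}$ by $\gamma$ would produce an efficient pants decomposition for $\alpha_{-}$ strictly closer to $p_{+}$, contradicting that $(p_{+},p_{-})$ is an efficient defining pair. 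The paper's proof is exactly this: cite the external lemma, then invoke Lemma~\ref{lem:efficient defining pair 1} to conclude $\gamma\in\psi$.
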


\begin{proof}

Suppose that an essential curve $\gamma\in{p_{+}}$ bounds an odd number of punctures of $\Sigma$. From Lemma 5.10 of \cite{Kirby_Thompson_1}, $\gamma$ is a cut-reducing curve for the unlink $L$, following that $\gamma\in{\psi=p_{+}\cap p_{-}}$ from Lemma \ref{lem:efficient defining pair 1}.

\end{proof}

\begin{lem}
Let $\theta\in{p_{ij}^{i}}$ be a compressing curve for the unlink $L=\alpha_{+}\cup_{\Sigma}\alpha_{-}$ and $\pi_{\theta}^{\text{in}}\subset{\Sigma}$ be the pair of pants with $\partial\pi_{\theta}^{\text{in}}=\theta\cup\partial_{1}\cup\partial_{2}$ and $\pi_{\theta}^{\text{in}}\subset{S_{\theta}^{\text{in}}}$.
\begin{itemize}
\item[(i)] If $\theta$ is reducing for the unlink $L$, then both $\partial_{1}$ and $\partial_{2}$ are either reducing or cut-reducing for $L$.
\item[(ii)] If $\theta$ is compressing for $\alpha_{+}$ but for $\alpha_{-}$, then both $\partial_{1}$ and $\partial_{2}$ are cut-reducing for $L$.
\end{itemize}
\label{lem:efficient defining pair 3}
\end{lem}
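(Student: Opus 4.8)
The plan is to analyze the combinatorial structure of the pair of pants $\pi_\theta^{\text{in}}$ sitting inside the compressing curve $\theta$, using the fact that $\theta$ bounds a compressing disk $D_\theta$ on the appropriate side(s), and then surgering along $D_\theta$ to reduce questions about $\partial_1,\partial_2$ to questions about curves bounding disks with controlled intersection with the tangle. Recall that $\pi_\theta^{\text{in}}$ has boundary $\theta\cup\partial_1\cup\partial_2$, so in $\Sigma$ the curve $\theta$ is isotopic to the band sum (boundary of a regular neighborhood of $\partial_1\cup\partial_2\cup(\text{an arc})$) of $\partial_1$ and $\partial_2$; equivalently, the number of punctures enclosed satisfies (number inside $\theta$) $=$ (number inside $\partial_1$) $+$ (number inside $\partial_2$), since $\pi_\theta^{\text{in}}$ itself contains no punctures (it is an honest pair of pants with all three boundary components being essential curves, not punctures). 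The first step is to record this additivity of puncture counts, which already forces a parity constraint once we know the parity of the count inside $\theta$.

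For part (i): if $\theta$ is reducing for $L$, then $\theta$ bounds compressing disks $D_\theta^+\subset B_+$ and $D_\theta^-\subset B_-$, and $\theta$ encloses an even number of punctures. Since each $\partial_k$ is essential and a curve in the efficient pants decomposition $p_{ij}^i$, by definition it is either compressing or cut for the relevant tangle; combined with Lemma~\ref{lem:efficient defining pair 2} and the structure of efficient defining pairs, the candidates are exactly: reducing, cut-reducing, or a curve bounding an odd number of punctures that is cut on one side. The key move is to cap off: glue the disk $D_\theta^+$ (resp. $D_\theta^-$) to $\pi_\theta^{\text{in}}$ along $\theta$ to get a once-punctured-or-unpunctured annulus-like cobordism between $\partial_1$ and $\partial_2$ inside $B_\pm$; this shows $\partial_1$ and $\partial_2$ bound disks in $B_\pm$ whose intersection with $\alpha_\pm$ differs by the same parity, and that this common disk family, together with $\theta$'s compressing disks, assembles the required reducing or cut-reducing spheres. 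The parity: since $\theta$ bounds evenly, $\partial_1$ and $\partial_2$ enclose numbers of the same parity; if both are even we get that each $\partial_k$ is reducing, and if both are odd we get that each $\partial_k$ is cut-reducing — the "mixed" case is excluded precisely by the parity additivity. For part (ii): if $\theta$ compresses in $B_+$ but not in $B_-$, then $\theta$ is still a compressing curve of the tangle $\alpha_+$, so it bounds a disk $D_\theta^+\subset B_+$ disjoint from $\alpha_+$, and on the $B_-$ side the minimal disk bounded by $\theta$ meets $\alpha_-$ in some nonzero even number of points; but essentiality of $\theta$ together with efficiency forces this count to be exactly $2$ (a larger count would contradict the curve being "efficient", i.e., cut or compressing, once we isotope; more precisely $\theta$ being compressing but not reducing in an efficient decomposition means it is ``half-reducing''). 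Then capping $\pi_\theta^{\text{in}}$ with $D_\theta^+$ on the $B_+$ side shows $\partial_1,\partial_2$ are both compressing for $\alpha_+$, while on the $B_-$ side the two points of intersection get distributed one to each of $\partial_1,\partial_2$ (they cannot both go to one side, else the other $\partial_k$ would be reducing, contradicting that it is an essential curve of the decomposition in the non-reducible situation), yielding that each $\partial_k$ is cut-reducing for $L$.

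The main obstacle I expect is controlling the $B_-$-side disk in part (ii) and the corresponding intersection counts in part (i): a priori the disk that $\theta$ bounds in $B_-$ could meet $\alpha_-$ in many points, and one must argue — using minimality of the efficient defining pair, the classification of curves in $\psi$ versus $g_\pm$ from Lemma~\ref{lem:efficient defining pair 1}, and the fact that unlink complements are handlebodies so that innermost-disk/outermost-arc arguments terminate — that after isotopy the intersection is exactly $0$ or $2$. The cleanest route is probably to invoke Lemma~5.10 of \cite{Kirby_Thompson_1} (as in the proof of Lemma~\ref{lem:efficient defining pair 2}) to pin down when an essential curve in an efficient pants decomposition is reducing versus cut-reducing, and then reduce everything to the additive puncture-count identity $|\text{punctures inside }\theta| = |\text{punctures inside }\partial_1| + |\text{punctures inside }\partial_2|$ together with a parity bookkeeping. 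Once the intersection numbers are nailed down, assembling the explicit reducing/cut-reducing spheres from $D_\theta^\pm$ and the disks bounded by $\partial_1,\partial_2$ is routine.
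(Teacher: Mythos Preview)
Your approach is genuinely different from the paper's, and it has a real gap in the substantive cases.

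The paper argues both parts by contradiction using the A-move combinatorics of Lemma~\ref{lem:efficient defining pair 1}. For (i), if some $\partial_k$ fails to be reducing or cut-reducing, then (using Lemma~\ref{lem:efficient defining pair 2}) it bounds an even number of punctures and lies in $g_+$; parity then forces both $\partial_1,\partial_2\in g_+$, and one tracks the two A-moves $\partial_1\mapsto\partial_1'$ and $\partial_2\mapsto\partial_2'$ along a geodesic $p_+\to p_-$ to produce two distinct curves of $g_-$ each meeting $\partial_1$ in two points, contradicting property~(iv). Part (ii) is handled by the same mechanism, now with $\theta$ itself among the curves being moved.

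Your capping argument does not do what you claim. Gluing $D_\theta^{\pm}$ to $\pi_\theta^{\text{in}}$ along $\theta$ produces an \emph{annulus} cobounding $\partial_1$ and $\partial_2$, not separate compressing or cut disks for each $\partial_k$; an annulus in $B_\pm$ disjoint from $\alpha_\pm$ does not force either boundary component to bound a compressing disk there. So in the even-parity subcase of (i) you still need to exclude $\partial_k\in g_+$, and nothing geometric in your outline accomplishes this---the minimality of the efficient defining pair, encoded in the A-move properties, is genuinely required.

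Your treatment of (ii) is also confused about the target. Cut-reducing means bounding a \emph{cut} disk on \emph{both} sides, hence enclosing an \emph{odd} number of punctures; the conclusion is therefore that $\partial_1,\partial_2$ are cut for $\alpha_+$, not compressing. What must actually be ruled out is the case where both $\partial_1,\partial_2$ enclose even numbers of punctures. Your ``distribute the two intersection points'' step does not address this: there is no canonical disk for $\theta$ in $B_-$ meeting $\alpha_-$ in exactly two points when $\theta\in g_+$, and even if there were, a curve bounding a compressing disk on one side and a cut disk on the other is neither reducing nor cut-reducing. The paper disposes of this even-parity case by the same A-move contradiction as in (i).

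Your parity bookkeeping and the appeal to Lemma~5.10 of \cite{Kirby_Thompson_1} are correct and handle the odd-parity subcases cleanly, but those are the easy halves. The substantive content of the lemma is the even-parity analysis, and for that the A-move argument (or something equivalent exploiting minimality of the defining pair) is needed; your proposal does not supply it.
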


\begin{proof}
First, we show (i). Suppose that $\partial_{1}$ is compressing for $\alpha_{+}$ but for $\alpha_{-}$ (i.e., $\partial_{1}\in{g_{+}}$). Since $\theta$ bounds an even number of punctures, $\partial_{2}\in{g_{+}}$ also bounds an even number of punctures. From Lemma \ref{lem:efficient defining pair 1}, $\theta$ is not moved and both $\partial_{1}$ and $\partial_{2}$ are moved by length one edges to $\partial_{1}'$ and $\partial_{2}'$ in $g_{-}$, respectively. Suppose that the A-move $\partial_{2}\mapsto\partial_{2}'$ occurs after the A-move $\partial_{1}\mapsto\partial_{1}'$ occurs. $E\subset{\Sigma}$ denotes the 4 punctured 2-sphere where the A-move $\partial_{1}\mapsto\partial_{1}'$ occurs, following that $\partial_{2}\cup\theta\subset{\partial E}$. Let $E'\subset{\Sigma}$ denote the 4 punctured 2-sphere where $\partial_{2}\mapsto\partial_{2}'$ occurs. From Lemma \ref{lem:efficient defining pair 1}, there exists some boundary component $\partial$ satisfying $\partial=\partial\pi_{\partial_{1}}^{\text{in}}\cap\partial E'$. Since $\partial_{2}$ does not bound $\partial$, $\partial_{2}'$ bounds $\partial$ after the second A-move $\partial_{2}\mapsto\partial_{2}'$. Consequently, $\partial$ is bounded by both $\partial_{1}'$ and $\partial_{2}'$, and it follows that $|\partial_{1}\cap\partial_{1}'|=|\partial_{1}\cap\partial_{2}'|=2$. This contradicts to (iv) of Lemma \ref{lem:efficient defining pair 1}. 


\begin{figure}[h]
\center
\includegraphics[width=50mm]{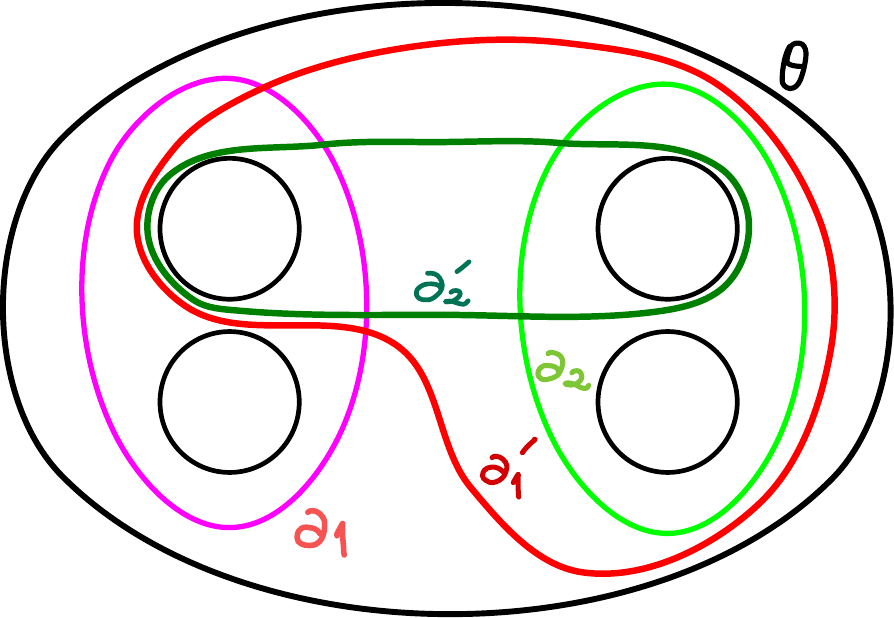}
\caption{An image of A-moves $\partial_{1}\mapsto\partial_{1}'$ and $\partial_{2}\mapsto\partial_{2}'$.}
\end{figure}

Finally, we show (ii). Suppose that $\theta\in{g_{+}}$ and $\partial_{1}$ is also compressing for $\alpha_{+}$, where $\partial\pi_{\theta}^{\text{in}}=\theta\cup\partial_{1}\cup\partial_{2}$. Since each $\theta$ and $\partial_{1}$ bounds an even number of punctures, $\partial_{2}$ also bounds an even number of punctures.

Suppose that $\partial_{2}$ is reducing for $L$, following that it is not moved by $p_{+}\rightarrow p_{-}$ from Lemma \ref{lem:efficient defining pair 1}. Hence, it is sufficient to prove the following two cases.\\
\textbf{Case1} $\partial_{1}\mapsto\partial_{1}'$ occurs before $\theta\mapsto\theta'$ occurs. \\
\textbf{Case2} $\theta\mapsto\theta'$ occurs before $\partial_{1}\mapsto\partial_{1}'$ occurs.

However, we obtain the same contradictions as in the proof of (i) (see Figure 10).


\begin{figure}[h]
\center
\includegraphics[width=90mm]{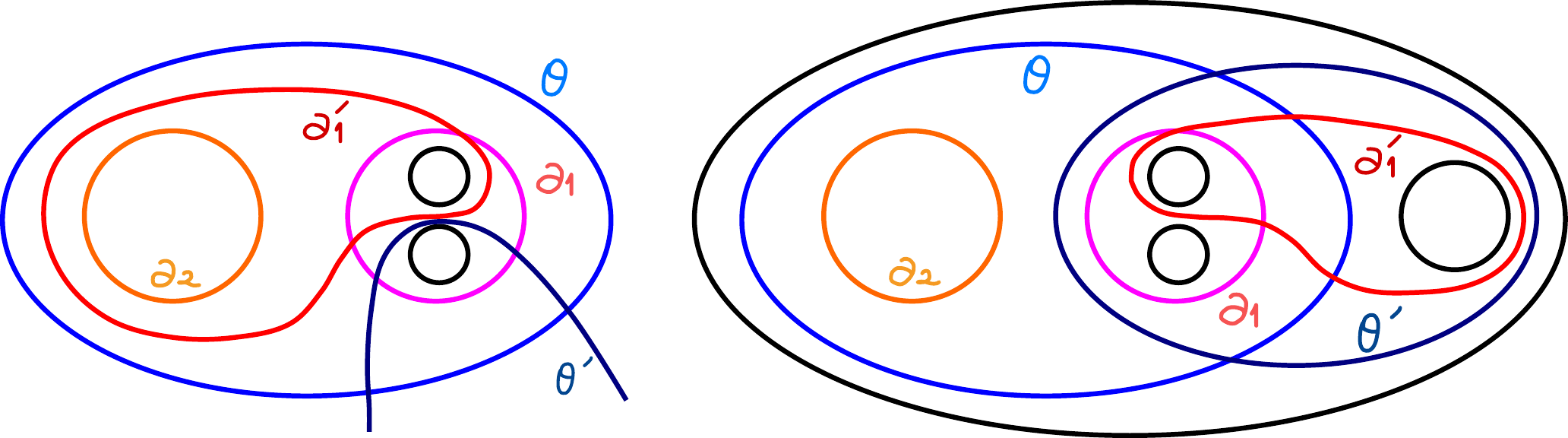}
\caption{An image of A-moves $\theta\mapsto\theta'$ and $\partial_{1}\mapsto\partial_{1}'$.}
\end{figure}

Finally, suppose that $\partial_{2}$ is compressing for $\alpha_{+}$. Let $\theta\mapsto\theta'$, $\partial_{1}\mapsto\partial_{1}'$, and $\partial_{2}\mapsto\partial_{2}'$ denote the three A-moves. If $\theta\mapsto\theta'$ occurs after $\partial_{1}\mapsto\partial_{1}'$ and $\partial_{2}\mapsto\partial_{2}'$, this is contradiction as in the proof of (i). If $\theta\mapsto\theta'$ occurs after $\partial_{1}\mapsto\partial_{1}'$, then we obtain the same contradiction as in the proof of when $\partial_{2}$ is reducing.

\end{proof}

Let $F=F_{1}\sqcup\cdots\sqcup F_{n}$ be the distant sum of standard unknotted surfaces with $n\geq2$ and $\mathcal{T}$ an unstabilized bridge trisection of $F$ whose spine and bridge surface are $\bigcup_{i\not=j}(B_{ij},\alpha_{ij})$ and $\Sigma$, respectively. Let $(p_{ij}^{i},p_{ki}^{i})\in{P_{c}(\alpha_{ij})\times P_{c}(\alpha_{ki})}$ be an arbitrary efficient defining pair for the unlink $\partial\mathcal{D}_{i}=\alpha_{ij}\cup_{\Sigma}\alpha_{ki}$.

\begin{lem}
Let $c\in{\psi_{i}}=p_{ij}^{i}\cap p_{ki}^{i}$ be a cut-reducing curve. Let $\Pi_{l}:=F_{l}\cap\Sigma=:\{a_{1}^{l},\cdots,a_{m_{l}}^{l}\}$, where $m_{l}=2b(F_{l})$ for each $l\in{\{1,\cdots,n\}}$. 
\begin{itemize}
\item[(i)] $c$ bounds an odd number of punctures
\[
\bigcup_{l=1}^{L}\Pi_{l}\cup\left\{a_{1}^{L+1}\cdots,a_{2M-1}^{L+1}\right\},
\]
where $0\leq L<n$, $M\in{\{1,2,3\}}$, and $\bigcup\Pi_{l}=\phi$ when $L=0$.
\item[(ii)] Let $\pi_{c}^{\text{in}}\subset{\Sigma}$ be the pair of pants with $\partial\pi_{c}^{\text{in}}=c\cup\partial_{1}\cup\partial_{2}$. Then, there are six possible isotopy classes of the pair $(\partial_{1},\partial_{2})$ as in Figure \ref{fig:lemma 4 1 3}.


\begin{figure}[h]
\center
\includegraphics[width=80mm]{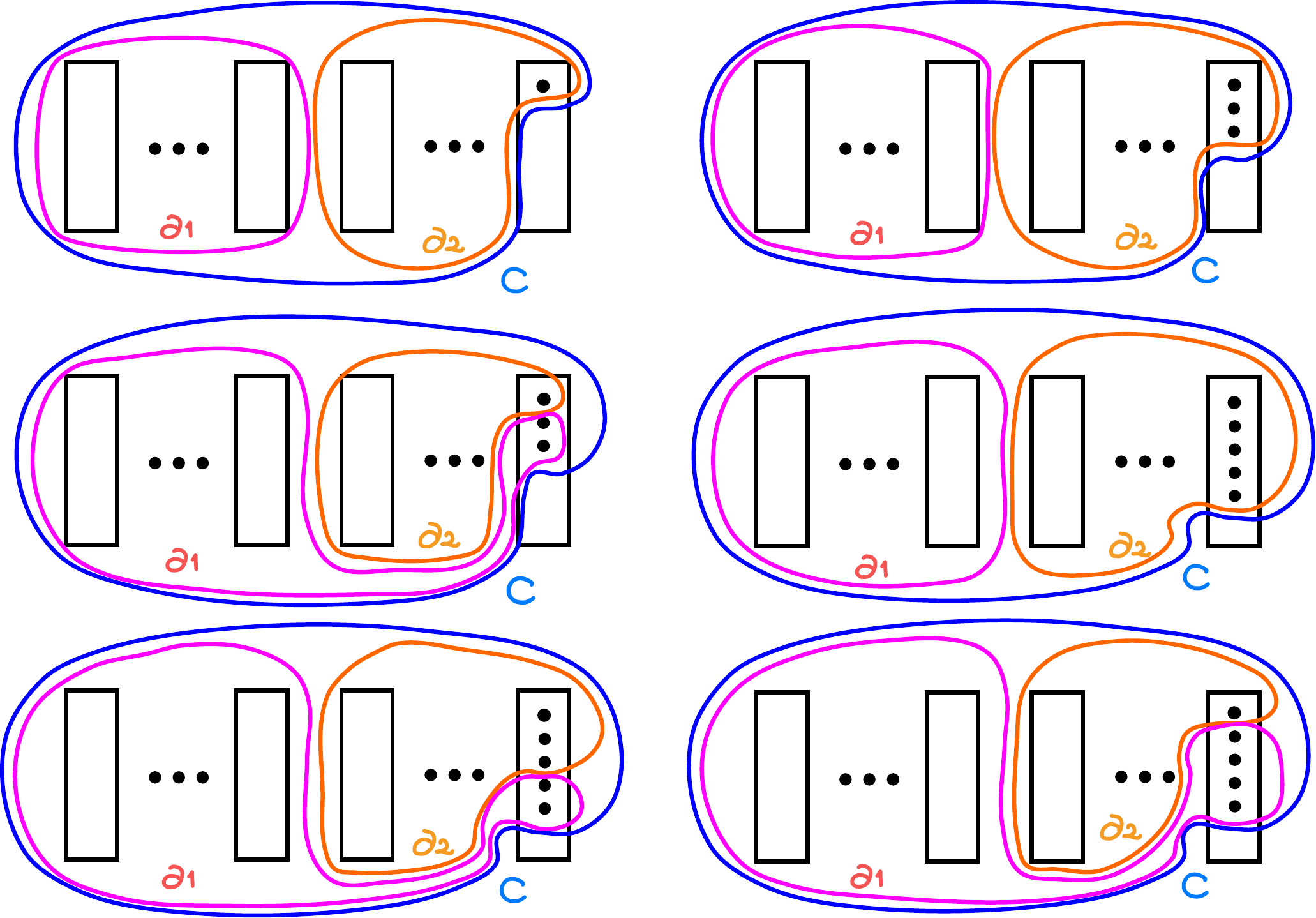}
\caption{Each box in the above figure represents the set of punctures $\Pi_{l}=F_{l}\cap\Sigma$. The blue and the orange curves represent two cut-reducing curves and the pink curves represent either a compressing curve or a reducing curve for $\partial\mathcal{D}_{i}$.}
\label{fig:lemma 4 1 3}
\end{figure}

\end{itemize}
\label{lem:efficient defining pair 4}
\end{lem}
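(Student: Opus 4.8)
The plan is to combine the cut-reducing sphere attached to $c$ with the distant-sum decomposition $\mathcal{T}\simeq\mathcal{T}_{1}\sqcup\cdots\sqcup\mathcal{T}_{n}$ furnished by Lemma \ref{lem:bridge trisection of distant sum}, and then to finish with a finite case analysis controlled by two numerical facts: every standard surface has bridge number at most $3$, so $|\Pi_{l}|=2b(F_{l})\le 6$ for each $l$; and the minimal bridge trisection of each of the seven standard surfaces has $c_{i}=1$, so each $\partial\mathcal{D}_{i}^{l}$ is a single unknotted circle in $\partial X_{i}$.

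For (i), I would fix a cut-reducing sphere $Q_{c}\subset\partial X_{i}\cong S^{3}$ for $c$. By definition $Q_{c}\cap\Sigma=c$ and $Q_{c}$ meets the unlink $\partial\mathcal{D}_{i}$ transversally in exactly the two points $D_{c}^{+}\cap\alpha_{ij}$ and $D_{c}^{-}\cap\alpha_{ki}$, lying in $\text{int}\,B_{ij}$ and $\text{int}\,B_{ki}$; since a $2$-sphere cannot meet a circle in a single point, these two points lie on one component $K$ of $\partial\mathcal{D}_{i}$, and $K$ is the circle $\partial\mathcal{D}_{i}^{l_{0}}$ of a unique summand $F_{l_{0}}$. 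Next I would put $Q_{c}$ in standard position with respect to the reducing spheres in $\partial X_{i}$ that realise the distant decomposition of $\partial\mathcal{D}_{i}$ (innermost-disk surgeries along sub-disks of those spheres, which are disjoint from $\partial\mathcal{D}_{i}$), so that $Q_{c}$ lies in the region cut off by those spheres which contains $\partial\mathcal{D}_{i}^{l_{0}}$; consequently, on $\Sigma$, the whole set $\Pi_{l}$ lies on one side of $c$ for every $l\ne l_{0}$. A parity count then pins down the rest: the disk $S_{c}^{\text{in}}$ separates the ball cut off by $Q_{c}$ on the inside; the arc of $K$ in that ball runs from a point of $\text{int}\,B_{ij}$ to a point of $\text{int}\,B_{ki}$, hence meets $S_{c}^{\text{in}}$ an odd number of times, while every other component of $\partial\mathcal{D}_{i}$ in that ball meets it an even number of times; so the number of punctures inside $c$ is odd. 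Relabelling so that $\Pi_{1},\dots,\Pi_{L}$ lie inside $c$, $\Pi_{L+2},\dots,\Pi_{n}$ lie outside, and $F_{L+1}=F_{l_{0}}$, the punctures inside $c$ form $\bigcup_{l\le L}\Pi_{l}$ together with an odd proper subset of $\Pi_{L+1}$; writing its cardinality $2M-1$ gives $1\le 2M-1\le|\Pi_{L+1}|-1\le 5$, so $M\in\{1,2,3\}$, and $L\le n-1$ because a splitting summand must exist.

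For (ii), observe that $\pi_{c}^{\text{in}}$ contains no puncture in its interior, so the number of punctures inside $\partial_{1}$ plus the number inside $\partial_{2}$ equals the odd number inside $c$; hence exactly one of $\partial_{1},\partial_{2}$, say $\partial_{1}$, bounds an odd number of punctures, and by Lemma 5.10 of \cite{Kirby_Thompson_1} it is cut-reducing, so (i) applies to it. Since the punctures inside $\partial_{1}$ form a subset of those inside $c$, and using the structure of the efficient defining pair $(p_{ij}^{i},p_{ki}^{i})$ (Lemmas \ref{lem:efficient defining pair 1}, \ref{lem:efficient defining pair 2}, \ref{lem:efficient defining pair 3}) to exclude the possibility that $\partial_{1}$ or $\partial_{2}$ subdivides any summand other than $F_{L+1}$, one obtains that $\partial_{1}$ bounds a union of full summands together with an odd subset $B$ of $A:=\{a_{1}^{L+1},\dots,a_{2M-1}^{L+1}\}$, while $\partial_{2}$ bounds the complementary union of full summands together with the even set $A\setminus B$, so $\partial_{2}$ is reducing when $A\setminus B=\phi$ and a one-sided compressing curve otherwise. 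Since $1\le|B|\le|A|=2M-1\le 5$ with $|B|$ odd, the pair $(|A|,|B|)$ takes exactly the six values $(1,1),(3,1),(3,3),(5,1),(5,3),(5,5)$, which are the six isotopy types of $(\partial_{1},\partial_{2})$ shown in Figure \ref{fig:lemma 4 1 3}.

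The step I expect to be the main obstacle is the structural claim, used in both parts, that a cut-reducing curve respects the distant summands: that the trace on $\Sigma$ of the relevant cut-reducing sphere can be arranged, up to isotopy rel the punctures, to separate the punctures of at most one summand from the rest. For (i) this is the standard-position argument for $Q_{c}$ against the decomposing spheres of the distant sum, and for (ii) it additionally needs the verification that $\partial_{2}$ does not straddle two summands, which leans on the efficient-defining-pair lemmas; once these are in hand, the parity counts and the enumeration are routine.
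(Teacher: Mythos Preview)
Your proposal is correct and follows essentially the same route as the paper: for (i), both arguments hinge on the fact that the cut-reducing sphere $Q_{c}$ meets $\partial\mathcal{D}_{i}$ in exactly two points lying on a single component $\partial\mathcal{D}_{i}^{l_{0}}$, so every other component lies entirely on one side of $Q_{c}$ and hence its puncture set $\Pi_{l}$ lies entirely on one side of $c$; for (ii), both proofs split $\partial_{1}$ (odd, hence cut-reducing) from $\partial_{2}$ (even, hence compressing or reducing) and then enumerate the six configurations by how the punctures of $\Pi_{L+1}$ distribute between them. The one place to streamline is your innermost-disk surgery step in (i): it is unnecessary, since the separation conclusion already follows immediately from $Q_{c}\cap\partial\mathcal{D}_{i}^{l}=\emptyset$ for $l\neq l_{0}$, and the surgery as stated risks disturbing the equality $Q_{c}\cap\Sigma=c$ (or landing you in a region containing only $\Pi_{l_{0}}$, which would force $L=0$) unless you also control how the chosen reducing spheres meet $\Sigma$ --- the paper simply bypasses this and argues the separation directly.
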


\begin{proof}
First, we show (i). Let $c\in{\psi_{i}=p_{ij}^{i}\cap p_{ki}^{i}}$ be a cut-reducing curve for $\partial\mathcal{D}_{i}$, following that $c$ bounds an odd number of punctures of $\Pi:=\bigcup_{l}\Pi_{l}\subset{\Sigma}$. Let $\partial\pi_{c}^{\text{in}}=:c\cup\partial_{1}\cup\partial_{2}$. Then, we can assume that $\partial_{1}$ bounds an odd number of punctures and $\partial_{2}$ bounds an even number of punctures. Suppose that $c$ bounds an odd number of punctures of $\Pi_{L+1}$. If $c$ also bounds an odd number of punctures of $\Pi_{l'}$ for some integer $l'\not=L+1$, this contradicts to that $c$ is cut-reducing for $\partial\mathcal{D}_{i}$. If $c$ bounds an even number of punctures $\Pi_{l'}'\subsetneq\Pi_{l'}$ for $l'\not=L+1$, it follows $|F_{l'}\cap\partial\mathcal{D}_{i}|\geq2$, and this is a contradiction. Hence, it follows that, if $c$ bounds other punctures of $\Pi_{l}$, then $c$ must bound all punctures of $\Pi_{l}$ and this completes the proof of (i).

Finally, we show (ii). Assume that a cut-reducing curve $c$ bounds an odd number of punctures $\bigcup_{l=1}^{L}\Pi_{l}\cup\{a_{1}^{L+1},\cdots,a_{2M-1}^{L+1}\}$ using (i). If $L=0$, $c$ bounds $\{a_{1}^{1},\cdots,a_{2M-1}^{1}\}$, where $M\in{\{1,2,3\}}$, and one of $\{\partial_{1},\partial_{2}\}$ is compressing and the other is cut, completing the proof. Assume that $L\geq1$, $\partial\pi_{c}^{\text{in}}=:c\cup\partial_{1}\cup\partial_{2}$, and $\partial_{1}$ bounds an odd number of punctures. If $\partial_{1}$ bounds an even number of punctures of $\{a_{1}^{L+1},\cdots,a_{2M-1}^{L+1}\}$ including zero, $\partial_{2}$ bounds an odd number of punctures of $\{a_{1}^{L+1},\cdots,a_{2M-1}^{L+1}\}$, contradicting to that $\partial_{2}$ is compressing for $\partial\mathcal{D}_{i}$. Hence, it follows that $\partial_{1}$ bounds $\{a_{1}^{L+1},\cdots,a_{2M-1}^{L+1}\}$. For some $l\not=L+1$, if $\partial_{1}$ decomposes $\Pi_{l}$ into two sets of even punctures $\Pi_{l}'$ and $\Pi_{l}''$, then it follows $|F_{l}\cap\partial\mathcal{D}_{i}|\geq2$ and this contradicts to Lemma \ref{lem:bridge trisection of distant sum}. Consequently, we obtain the six cases in Figure \ref{fig:lemma 4 1 3}.
\end{proof}

\begin{lem}
Let $c\in{g_{ij}^{i}}$ be a compressing curve for $\alpha_{ij}$. Let $\Pi_{l}:=F_{l}\cap\Sigma=:\left\{a_{1}^{l},\cdots,a_{m_{l}}^{l}\right\}$, where $m_{l}=2b(F_{l})$ for each $l\in{\{1,\cdots,n\}}$.

\begin{itemize}
\item[(i)] $c$ bounds an even number of punctures
\[
\bigcup_{l=1}^{L}\Pi_{l}\cup\left\{a_{1}^{L+1},\cdots,a_{2M}^{L+1}\right\},
\]
where $0\leq L<n$, $M\in{\{1,2\}}$, and $\bigcup\Pi_{l}=\phi$ when $L=0$.
\item[(ii)] Let $\pi_{c}^{\text{in}}\subset{\Sigma}$ be the pair of pants with $\partial\pi_{c}^{\text{in}}=c\cup\partial_{1}\cup\partial_{2}$. Then, there are two possible isotopy classes of the pair $(\partial_{1},\partial_{2})$ as in the following figure.


\begin{figure}[h]
\center
\includegraphics[width=80mm]{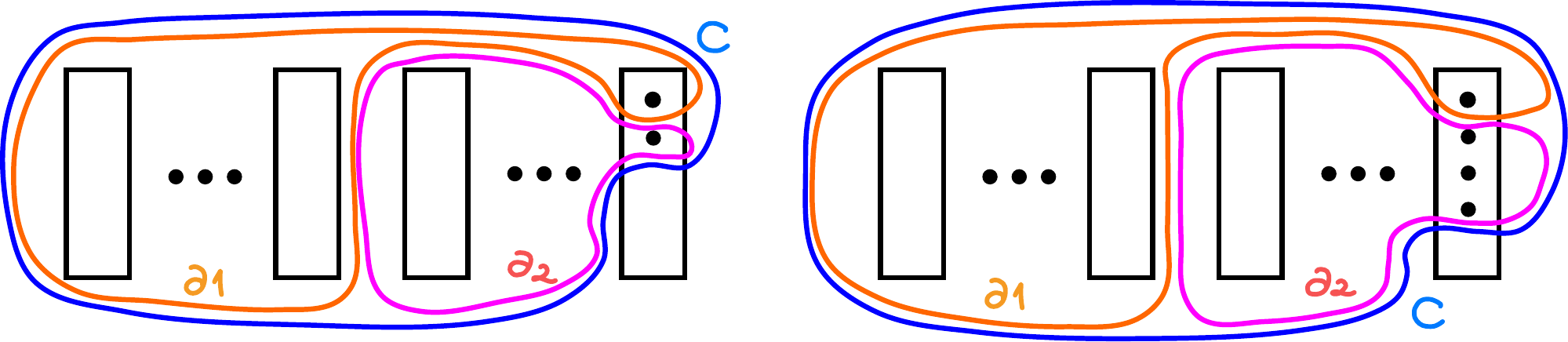}
\caption{The blue curve represents a compressing curve and the others represent cut-reducing curves.}
\label{fig:lemma 4 1 4}
\end{figure}

\end{itemize}

\label{lem:efficient defining pair 5}
\end{lem}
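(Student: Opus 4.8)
The plan is to follow the scheme of the proof of Lemma~\ref{lem:efficient defining pair 4}, transferring the analysis of the compressing curve $c$ to the two curves that cobound the pair of pants $\pi_{c}^{\text{in}}$ with $c$. First I would fix the setup. Since $c\in g_{ij}^{i}$, Lemma~\ref{lem:efficient defining pair 2} shows that $c$ bounds an even number of punctures, so $c$ is a compressing curve for $\alpha_{ij}$; and since $c\notin\psi_{i}$, parts (iii) and (iv) of Lemma~\ref{lem:efficient defining pair 1} prevent $c$ from being reducing for $\partial\mathcal{D}_{i}$ (if it were, the argument in the proof of Lemma~\ref{lem:efficient defining pair 2} would force $c\in\psi_{i}$). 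Writing $\partial\pi_{c}^{\text{in}}=c\cup\partial_{1}\cup\partial_{2}$, Lemma~\ref{lem:efficient defining pair 3}(ii) tells us that both $\partial_{1}$ and $\partial_{2}$ are cut-reducing curves for $\partial\mathcal{D}_{i}$, and the reasoning used in the proof of Lemma~\ref{lem:efficient defining pair 2} places $\partial_{1},\partial_{2}\in\psi_{i}$.

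Next I would apply Lemma~\ref{lem:efficient defining pair 4} to $\partial_{1}$ and to $\partial_{2}$ separately: each of them bounds an odd set of punctures made of finitely many entire blocks $\Pi_{l}$ together with an odd-size (hence proper, nonempty) subset of a single remaining block, and the pair of pants just inside it is one of the six shapes of Figure~\ref{fig:lemma 4 1 3}. Since $\partial_{1}$ and $\partial_{2}$ are the two inner boundary curves of $\pi_{c}^{\text{in}}$, the regions $S_{\partial_{1}}^{\text{in}}$ and $S_{\partial_{2}}^{\text{in}}$ are disjoint, and the punctures inside $c$ are their disjoint union together with any punctures of $\pi_{c}^{\text{in}}$ (none, in the surviving cases). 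Combining, the entire blocks inside $\partial_{1}$ and $\partial_{2}$ assemble into $\bigcup_{l=1}^{L}\Pi_{l}$, while the two odd subsets account for the remaining partial block. Using Lemma~\ref{lem:bridge trisection of distant sum} -- which splits $\mathcal{T}$ as a distant sum of unstabilized bridge trisections of the standard summands and hence furnishes disjoint separating $2$-spheres meeting $\Sigma$ in reducing curves $\delta_{1},\dots,\delta_{n}$ -- I would argue that $c$ cannot cut off a nonempty proper subset of the puncture set of more than one summand at once: if the odd chunks of $\partial_{1}$ and $\partial_{2}$ came from two distinct blocks, a surgery argument against the $\delta_{l}$ would yield $|F_{l}\cap\partial\mathcal{D}_{i}|\ge 2$ for some $l$, contradicting Lemma~\ref{lem:bridge trisection of distant sum}; so both chunks lie in a single block $\Pi_{L+1}$. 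As $c$ is not reducing, its bounded puncture set is not a union of whole blocks (a curve of $p_{ij}^{i}$ with that property, being already compressing for $\alpha_{ij}$, would be reducing for $\partial\mathcal{D}_{i}$ by the distant-sum form of $\alpha_{ki}$, hence in $\psi_{i}$), so the chunk cut off from $\Pi_{L+1}$ is a nonempty proper even subset; since $|\Pi_{L+1}|\le 6$ for every standard summand, that chunk has size $2M\in\{2,4\}$, which proves (i). Finally, reading off which of the six cases of Figure~\ref{fig:lemma 4 1 3} for $\partial_{1}$ and for $\partial_{2}$ are simultaneously compatible with $c$ being a compressing curve cutting off an even proper chunk leaves exactly the two configurations of Figure~\ref{fig:lemma 4 1 4}, which proves (ii).

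I expect the main obstacle to be the two steps where the pants-complex combinatorics alone do not suffice and the geometry of the distant-sum decomposition must be used: ruling out that $\partial_{1}$ and $\partial_{2}$ take their odd chunks from different summands, and showing that a curve bounding only whole blocks is reducing. Both should follow from innermost-circle and surgery arguments comparing a compressing disk for $c$ (or the cut-reducing spheres for $\partial_{1},\partial_{2}$) with the separating $2$-spheres $\delta_{l}$ supplied by Lemma~\ref{lem:bridge trisection of distant sum}, in the same spirit as the delicate case analysis in the proof of Lemma~\ref{lem:efficient defining pair 4}. A minor additional point, which falls out once this single-summand localisation is established together with the fact that $c$ is essential, is to confirm that in the two surviving configurations $\partial_{1}$ and $\partial_{2}$ are genuine curves, so that $\pi_{c}^{\text{in}}$ is unpunctured as in Figure~\ref{fig:lemma 4 1 4}.
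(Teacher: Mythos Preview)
Your outline is close to the paper's, and the reduction to $\partial_{1},\partial_{2}$ via Lemma~\ref{lem:efficient defining pair 3}(ii) and Lemma~\ref{lem:efficient defining pair 4} is sound. The one real discrepancy is the step where you rule out the odd chunks of $\partial_{1}$ and $\partial_{2}$ coming from two different summands. You propose a surgery argument yielding $|F_{l}\cap\partial\mathcal{D}_{i}|\ge 2$; but that conclusion is the one used in the proof of Lemma~\ref{lem:efficient defining pair 4} for \emph{cut-reducing} curves, where a c-reducing sphere is available. Here $c\in g_{ij}^{i}$ is only compressing for $\alpha_{ij}$, so there is no c-sphere through $c$ to surger against $\delta_{l}$, and the inequality $|F_{l}\cap\partial\mathcal{D}_{i}|\ge 2$ does not drop out.

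What the paper does instead is short and purely on the $\alpha_{ij}$ side: since $\mathcal{T}$ is (by Lemma~\ref{lem:bridge trisection of distant sum}) a distant sum, every arc of $\alpha_{ij}$ has both endpoints in a single $\Pi_{l}$; a compressing disk for $c$ in $B_{ij}$ therefore separates the arcs, forcing $|\,c\text{-side}\cap\Pi_{l}\,|$ to be even for every $l$. This one observation replaces your surgery step entirely. With it in hand, the paper then assumes $c$ bounds proper even pieces of two distinct blocks $\Pi_{l},\Pi_{m}$, and shows each of $\partial_{1},\partial_{2}$ must bound an odd piece of $\Pi_{l}$ and an odd piece of $\Pi_{m}$, contradicting that a cut curve meets the arcs of $\alpha_{ij}$ in a single point. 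Your route---pushing Lemma~\ref{lem:efficient defining pair 4} forward through $\partial_{1},\partial_{2}$---reaches the same contradiction once you insert the ``even from each block'' fact (if the odd chunks lay in distinct $\Pi_{l_{1}},\Pi_{l_{2}}$ then $c$ would bound an odd number from $\Pi_{l_{1}}$). So the fix is simply to replace the $|F_{l}\cap\partial\mathcal{D}_{i}|\ge 2$ claim by this parity observation; the rest of your argument for (i) and your case-matching for (ii) against Figure~\ref{fig:lemma 4 1 3} then goes through and matches the paper.
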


\begin{proof}
First, we show (i). Let $c\in{g_{ij}^{i}}$ be a compressing curve for $\alpha_{ij}^{i}$, then it follows easily that it bounds an even number of punctures of $\Pi:=\bigcup_{l}\Pi_{l}$. Since $c\in{g_{ij}^{i}}$, if $c$ bounds some punctures $\Pi_{l}'\subset{\Pi_{l}}$, then $|\Pi_{l}'|$ must be even for any integer $l$. Suppose that $c$ bounds $\Pi_{l}'\subsetneq\Pi_{l}$ and $\Pi_{m}'\subsetneq\Pi_{m}$ for some integers $l\not=m$ and $\partial\pi_{c}^{\text{in}}=:c\cup\partial_{1}\cup\partial_{2}$.
From Lemma \ref{lem:efficient defining pair 1}, it follows that both $\partial_{1}$ and $\partial_{2}$ bound an odd number of punctures and they are not moved by $p_{ij}^{i}\rightarrow p_{ki}^{i}$. If $\partial_{1}$ bounds all punctures of $\Pi_{l}'$, this contradicts to that the bridge trisection of $F$ is unstabilized, and $\partial_{1}$ cannot bound an even number of punctures $\Pi_{l}''\subsetneq\Pi_{l}'$ from the same reason. Hence, $\partial_{1}$ and $\partial_{2}$ bound an odd number of punctures $\Pi_{l}''\subsetneq\Pi_{l}'$ and $\Pi_{l}'''\subsetneq\Pi_{l}'$, respectively, where $\Pi_{l}''\cup\Pi_{l}'''=\Pi_{l}'$. On the other hand, since $c$ also bounds $\Pi_{m}'\subsetneq{\Pi_{m}}$, $\partial_{1}$ and $\partial_{2}$ bound an odd number of punctures $\Pi_{m}''\subsetneq\Pi_{m}'$ and $\Pi_{m}'''\subsetneq\Pi_{m}'$, respectively, where $\Pi_{m}''\cup\Pi_{m}'''=\Pi_{m}'$. In this situation, we obtain a contradiction that $\partial_{*}$ are efficient. 

Finally, we show (ii). From (i), we suppose that $c$ bounds an even number of punctures $\bigcup_{l=1}^{L}\Pi_{l}\cup\{a_{1}^{L+1},\cdots,a_{2M}^{L+1}\}$, where $0\leq L<n$ and $M\in{\{1,2\}}$. Since $c$ bounds an even number of punctures, both $\partial_{1}$ and $\partial_{2}$ bound either an even punctures or an odd punctures. From Lemma \ref{lem:efficient defining pair 3}, they must bound an odd number of punctures and we obtain two cases in Figure \ref{fig:lemma 4 1 4}.

\end{proof}

\begin{lem}
Let $c\in{\psi_{i}}$ be a reducing curve for $\partial\mathcal{D}_{i}=\alpha_{ij}\cup_{\Sigma}\alpha_{ki}$. Let $\partial\pi_{c}^{\text{in}}=c\cup\partial_{1}\cup\partial_{2}$. Then, both $\partial_{1}$ and $\partial_{2}$ are either reducing or cut-reducing for $\partial\mathcal{D}_{i}$ as in the figure below.


\begin{figure}[h]
\center
\includegraphics[width=90mm]{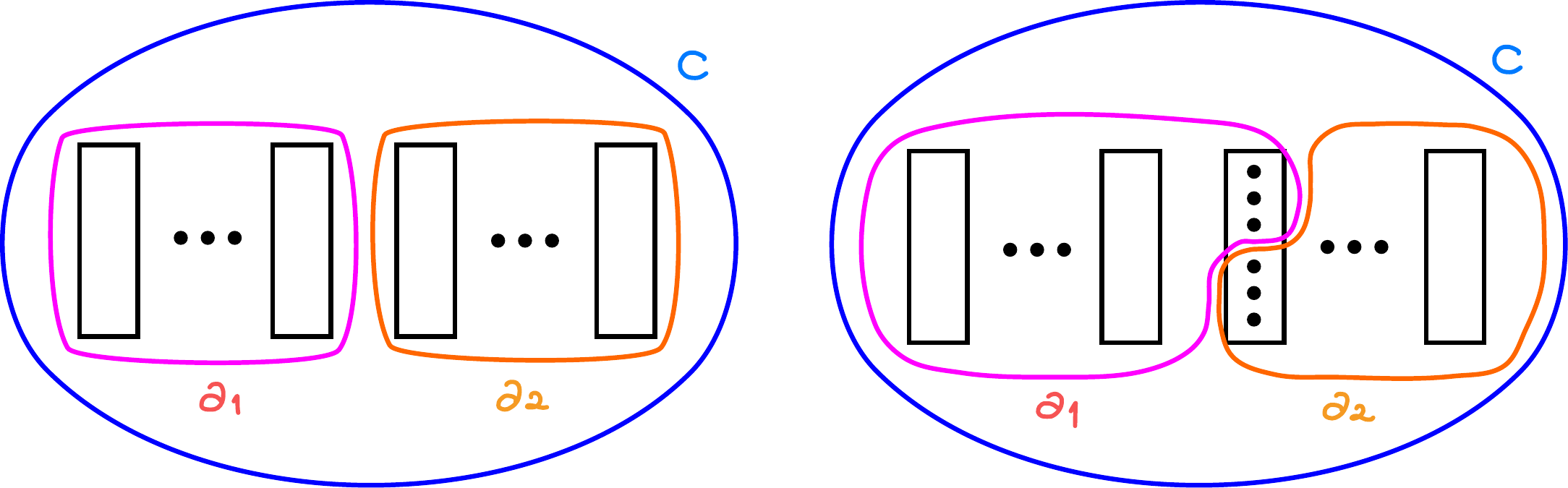}
\caption{An image of three curves $c$, $\partial_{1}$, and $\partial_{2}$.}
\label{fig:lemma 4 1 5}
\end{figure}

\label{lem:efficient defining pair 6}
\end{lem}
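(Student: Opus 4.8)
The plan is to bootstrap from Lemma~\ref{lem:efficient defining pair 3}(i) and then read off the list of configurations in Figure~\ref{fig:lemma 4 1 5} using the distant-sum structure recorded in Lemmas~\ref{lem:bridge trisection of distant sum}, \ref{lem:efficient defining pair 2}, \ref{lem:efficient defining pair 4} and \ref{lem:efficient defining pair 5}, together with the hypothesis that $\mathcal{T}$ is unstabilized.

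First I would observe that a reducing curve $c$ for $\partial\mathcal{D}_i$ is in particular a compressing curve for both $\alpha_{ij}$ and $\alpha_{ki}$, so $c$ meets the hypotheses of Lemma~\ref{lem:efficient defining pair 3} with $\theta=c$; part (i) of that lemma then gives at once that each of $\partial_1,\partial_2$ is reducing or cut-reducing for $\partial\mathcal{D}_i$. By Lemma~\ref{lem:efficient defining pair 1} and the argument of Lemma~\ref{lem:efficient defining pair 2}, the cut-reducing curves among $c,\partial_1,\partial_2$ lie in $\psi_i=p_{ij}^{i}\cap p_{ki}^{i}$, so (recalling $c\in\psi_i$) all the curves involved are unmoved along a shortest path realizing $d^{*}(p_{ij}^{i},p_{ki}^{i})$.

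Next I would pin down how $c,\partial_1,\partial_2$ sit relative to the puncture blocks $\Pi_l=F_l\cap\Sigma$. Since $\pi_c^{\text{in}}$ is an unpunctured pair of pants with $\partial\pi_c^{\text{in}}=c\cup\partial_1\cup\partial_2$ and $\pi_c^{\text{in}}\subset S_c^{\text{in}}$, the number of punctures bounded by $c$ equals the sum of the numbers bounded by $\partial_1$ and by $\partial_2$; matching parities (even for a reducing curve, odd for a cut-reducing curve) forces $\partial_1$ and $\partial_2$ to be either both reducing or both cut-reducing. Then the same argument used for Lemmas~\ref{lem:efficient defining pair 4}(i) and \ref{lem:efficient defining pair 5}(i) — no curve of an efficient defining pair can split a single block $\Pi_l$ into two nonempty even sub-collections, nor bound a proper even sub-collection of one $\Pi_l$, without contradicting Lemma~\ref{lem:bridge trisection of distant sum} (the constraint, coming from $\mathcal{T}$ being a distant sum of bridge trisections of the individual standard surfaces, on how $\partial\mathcal{D}_i$ meets each $F_l$) or the fact that $\mathcal{T}$ is unstabilized (Lemma~\ref{lem:reducible and stabilized}) — forces $c$, and likewise each of $\partial_1,\partial_2$, to bound a union of whole blocks possibly together with a sub-collection of at most one further block. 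Enumerating these possibilities and recording which of $\partial_1,\partial_2$ is reducing versus cut-reducing yields exactly the pictures of Figure~\ref{fig:lemma 4 1 5}.

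I expect the main obstacle to be this last enumeration. One must invoke the unstabilized hypothesis at the level of each individual standard summand and check, for each of the building blocks $\mathcal{U},\mathcal{P}_{\pm},\mathcal{K}_{2,0},\mathcal{K}_{1,1},\mathcal{K}_{0,2},\mathcal{T}$, which curves of the efficient defining pair can bound a proper sub-collection of that block's punctures. The parity count in $\pi_c^{\text{in}}$ organizes the bookkeeping, but eliminating the spurious configurations — and thereby matching the remaining cases exactly to Figure~\ref{fig:lemma 4 1 5} — is where the care lies.
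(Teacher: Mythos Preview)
Your approach is essentially the paper's, and your first two paragraphs already contain the complete argument: apply Lemma~\ref{lem:efficient defining pair 3}(i) with $\theta=c$ to get that each of $\partial_1,\partial_2$ is reducing or cut-reducing, then use the parity count in $\pi_c^{\text{in}}$ (even $+$ even or odd $+$ odd) to conclude they are \emph{both} reducing or \emph{both} cut-reducing. The paper runs these two steps in the opposite order --- parity split first, then citing Lemma~\ref{lem:efficient defining pair 3} for the even case and (implicitly via Lemma~\ref{lem:efficient defining pair 2} / Lemma~5.10 of \cite{Kirby_Thompson_1}) the odd case --- but the content is identical.

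Where you over-invest is your third paragraph. Figure~\ref{fig:lemma 4 1 5} is only depicting the dichotomy just described (the ``both reducing'' configuration on one side, the ``both cut-reducing'' configuration on the other), not a per-summand catalogue. No enumeration over $\mathcal{U},\mathcal{P}_{\pm},\mathcal{K}_{\ast},\mathcal{T}$ is required here; that finer block analysis is carried out once and for all in Lemmas~\ref{lem:efficient defining pair 4} and~\ref{lem:efficient defining pair 5}, and the present lemma simply quotes the parity conclusion. So your anticipated ``main obstacle'' does not actually arise.
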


\begin{proof}
Let $c\in{\psi_{i}}$ be a reducing curve for $\partial\mathcal{D}_{i}$ and $\partial\pi_{c}^{\text{in}}=c\cup\partial_{1}\cup\partial_{2}$. Since $c$ bounds an even number of punctures of $\Sigma$, both $\partial_{1}$ and $\partial_{2}$ bound either even punctures or odd punctures. If $\partial_{1}$ and $\partial_{2}$ bound an odd number of punctures, from Lemma \ref{lem:efficient defining pair 4}, both of them are cut-reducing for the unlink $\partial\mathcal{D}_{i}$ as in the right side of the above figure. If $\partial_{1}$ and $\partial_{2}$ bound an even number of punctures, then both of them must be reducing for $\partial\mathcal{D}_{i}$ from Lemma \ref{lem:efficient defining pair 3}.
\end{proof}

Let $(p_{ij}^{i},p_{ki}^{i})\in{P_{c}(\alpha_{ij})\times P_{c}(\alpha_{ki})}$ be an arbitrary efficient defining pair for the unlink $\partial\mathcal{D}_{i}=\alpha_{ij}\cup_{\Sigma}\alpha_{ki}$, where each $\alpha_{ij}$ forms the spine of an unstabilized bridge trisection $\mathcal{T}$ of the distant sum of finite standard surfaces $F=F_{1}\sqcup\cdots\sqcup F_{n}$. Let decompose $\Sigma$ into the upper and lower disks $D_{+}^{2}$ and $D_{-}^{2}$ and suppose that all punctures $F\cap\Sigma$ and every curve $c_{l}\in{c}\in{p_{ij}^{i}\cup p_{ki}^{i}}$ are inside $D_{+}^{2}$. Then, there exists the 4 punctured 2-sphere $E\subset{D_{+}^{2}}$ such that $\partial E=\partial D_{+}^{2}\cup\partial_{1}\cup\partial_{2}\cup\partial_{3}$ and $E\cap(F\cap\Sigma)=\phi$, where each $\partial_{i}$ is either some essential simple closed curve in $p_{ij}^{i}\cup p_{ki}^{i}$ or the boundary of a small regular neighborhood of some puncture of $F\cap\Sigma$. Then, possible isotopy classes of the tuple $\{\partial_{1},\partial_{2},\partial_{3}\}$ are the followings.\\
\textbf{Case1} All of them are reducing for $\partial\mathcal{D}_{i}$.\\
\textbf{Case2} One of them is reducing and the others are cut-reducing for $\partial\mathcal{D}_{i}$.\\
\textbf{Case3} $\partial_{1}$ is compressing for $\alpha_{ij}$ and $\partial_{2}$ and $\partial_{3}$ are cut-reducing for $\partial\mathcal{D}_{i}$.

By the way, we note that it is impossible that at most two curves of $\{\partial_{1},\partial_{2},\partial_{3}\}$ are compressing for $\alpha_{ij}$ from Lemma \ref{lem:efficient defining pair 3}. For simplicity, we can assume that possible cases of $\{\partial_{1},\partial_{2},\partial_{3}\}$ are either \textbf{Case1} or \textbf{Case2} from the following lemma.

\begin{lem}
Let $\{\partial_{1},\partial_{2},\partial_{3}\}$ be essential simple closed curves in $p_{ij}^{i}\cup p_{ki}^{i}$ mentioned above. Then, through an appropreate homeomorphism on $\Sigma-F$, we can assume that possible isotopy classes of the tuple $\{\partial_{1},\partial_{2},\partial_{3}\}$ are either \textbf{Case1} or \textbf{Case2}.
\label{lem:efficient defining pair }
\label{lem:efficient defining pair 7}
\end{lem}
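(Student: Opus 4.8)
The plan is to upgrade the trichotomy of cases for $\{\partial_1,\partial_2,\partial_3\}$ to the dichotomy by showing that \textbf{Case3} can always be removed after an isotopy of $\Sigma - F$. So suppose we are in \textbf{Case3}: $\partial_1$ is compressing for $\alpha_{ij}$ (hence $\partial_1 \in g_{ij}^i$, since by Lemma \ref{lem:efficient defining pair 2} it bounds an even number of punctures and by Lemma \ref{lem:efficient defining pair 1}(iii)--(iv) a compressing curve that is not a reducing curve lies in $g_{ij}^i$), and $\partial_2,\partial_3$ are cut-reducing for $\partial\mathcal{D}_i$. The key structural input is Lemma \ref{lem:efficient defining pair 5}: applied to the compressing curve $\partial_1 \in g_{ij}^i$, it tells us exactly what $\pi_{\partial_1}^{\text{in}}$ looks like — the curve $\partial_1$ bounds a union of full puncture-blocks $\bigcup_{l=1}^{L}\Pi_l$ together with $2M$ punctures from one more block $\Pi_{L+1}$ with $M \in \{1,2\}$, and its two inner boundary curves are both cut-reducing, in one of the two configurations of Figure \ref{fig:lemma 4 1 4}.

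First I would observe that in \textbf{Case3} the region $E$ is the complement in $D_+^2$ of the regions bounded by $\partial_1,\partial_2,\partial_3$; since $E$ contains no punctures, the punctures of $\Sigma$ are distributed among the interiors of the disks/subsurfaces cut off by the $\partial_i$. Because $\partial_1$ is compressing (bounding an even block) while $\partial_2,\partial_3$ are cut-reducing (each bounding, by Lemma \ref{lem:efficient defining pair 4}, an odd number of punctures of the form $\bigcup \Pi_l \cup \{$an odd slice of one more block$\}$), the parity bookkeeping pins down which block the "leftover" odd punctures of $\partial_2$ and $\partial_3$ come from, and forces $M=2$ in Lemma \ref{lem:efficient defining pair 5}(i) for $\partial_1$ — i.e. $\partial_1$ bounds exactly $2$ punctures of the partial block $\Pi_{L+1}$ (these being split $1+1$ into the two cut-curves of $\pi_{\partial_1}^{\text{in}}$), so that $\Pi_{L+1}$ is a single torus block $\mathcal{T}$ contributing the two punctures $a_1^{L+1}, a_2^{L+1}$. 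Next I would produce an explicit homeomorphism of $\Sigma - F$ supported in a neighborhood of $E \cup (\text{the disk inside } \partial_1)$ that slides the pair of pants $\pi_{\partial_1}^{\text{in}}$ and its two cut-reducing children outward past one of the cut-reducing curves $\partial_2$ (or $\partial_3$): concretely, one replaces $\partial_1$ by a new curve $\partial_1'$ that bounds the same puncture block together with what $\partial_2$ bounded, turning the local picture into one of \textbf{Case1} or \textbf{Case2}. The point is that such a homeomorphism exists precisely because $E$ is puncture-free: the four-punctured-sphere (here, four-boundary-component disk) $E$ admits a mapping class permuting its boundary curves, and this mapping class extends by the identity outside, carrying the efficient defining pair to another efficient defining pair (efficiency, being defined by each curve bounding a compressing or cut disk, is preserved by homeomorphisms of $\Sigma$ relative to $F$, and the $\mathcal{L}^*$-distance is a homeomorphism invariant).

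I would organize the argument as: (1) reduce to the local model of $E$ with its three distinguished boundary curves, recording parities via Lemmas \ref{lem:efficient defining pair 2}, \ref{lem:efficient defining pair 4}, \ref{lem:efficient defining pair 5}; (2) in the \textbf{Case3} model, use Lemma \ref{lem:efficient defining pair 5}(ii) to write down the two sub-configurations of $\pi_{\partial_1}^{\text{in}}$ and check that in each the compressing curve $\partial_1$ can be "absorbed" by reindexing the puncture blocks it together with a neighboring cut-reducing curve bounds; (3) exhibit the outward-sliding homeomorphism of $\Sigma - F$ explicitly in coordinates (it is a composition of handle-slide-type isotopies interchanging which side of a subsurface of $D_+^2$ the blocks sit on), and verify it takes $\{\partial_1,\partial_2,\partial_3\}$ to a triple in \textbf{Case1} or \textbf{Case2} while keeping $(p_{ij}^i, p_{ki}^i)$ an efficient defining pair; (4) conclude. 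I expect the main obstacle to be step (2)--(3): one must be careful that absorbing $\partial_1$ does not secretly introduce a \emph{new} compressing curve elsewhere among the curves of $p_{ij}^i \cup p_{ki}^i$ that are nested inside $\partial_1$ or inside $\partial_2$, so the homeomorphism has to be chosen to act as the identity on everything strictly inside those curves — which is possible exactly because, by Lemma \ref{lem:efficient defining pair 1} and the fact that cut-reducing and reducing curves lie in $\psi_i = p_{ij}^i \cap p_{ki}^i$, the only compressing curves are the ones in $g_{ij}^i$, and Lemma \ref{lem:efficient defining pair 5} shows each such curve's interior pair of pants already has only cut-reducing children, so no compressing curve is nested immediately inside another. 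Verifying this "no bad nesting" claim and checking that the explicit homeomorphism genuinely realizes the desired reindexing is where the real work lies; the rest is bookkeeping with the figures.
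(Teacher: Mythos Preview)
Your overall strategy --- reduce to \textbf{Case3} and then rearrange to land in \textbf{Case1} or \textbf{Case2} --- is of course the right shape, but the mechanism you propose does not actually change the case, and this is a genuine gap.

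You say that ``the four-boundary-component disk $E$ admits a mapping class permuting its boundary curves, and this mapping class extends by the identity outside.''  But such a homeomorphism only \emph{relabels} the boundary components of $E$: the image of the triple $\{\partial_1,\partial_2,\partial_3\}$ is the same unordered triple, so one curve is still compressing and two are still cut-reducing --- you are still in \textbf{Case3}.  Equivalently, if you insist the homeomorphism fix the punctures, then each $\partial_i$ is sent to a curve bounding exactly the same punctures, hence with the same compressing/cut status.  Either way, no permutation of the boundary of $E$ can convert a compressing curve into a reducing or cut-reducing one.  Your alternative phrasing, ``replace $\partial_1$ by a new curve that bounds the same block together with what $\partial_2$ bounded,'' is closer to what is needed, but it is \emph{not} realized by a mapping class of $E$; it is a genuine change of the pants decomposition, and you never say which curve is being removed to keep the result a pants decomposition, nor why the new outermost triple avoids \textbf{Case3}.  (Incidentally, your parity discussion is also off: you write ``forces $M=2$'' and then describe $\partial_1$ as bounding exactly two punctures of $\Pi_{L+1}$, which is $M=1$; and there is no reason $\Pi_{L+1}$ must be a torus block.)

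The paper does something different and more concrete.  Rather than looking inside the compressing curve $\partial_1$, it looks inside the cut-reducing curve $\partial_2$: write $\partial\pi_{\partial_2}^{\text{in}}=\partial_2\cup\partial_1'\cup\partial_2'$ with $\partial_2'$ cut-reducing.  By Lemma~\ref{lem:efficient defining pair 4} the remaining child $\partial_1'$ is either reducing or compressing.  Now \emph{replace} $\partial_2$ by a new curve $\widehat{\partial_2}$ that encloses $\partial_1\cup\partial_3$; the resulting outermost region has inner boundaries $\{\partial_1',\partial_2',\widehat{\partial_2}\}$, and $\widehat{\partial_2}$ is cut-reducing (odd $+$ even $=$ odd punctures from a single block).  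If $\partial_1'$ was reducing, this is already \textbf{Case2}.  If $\partial_1'$ is compressing, one repeats the same replacement one level deeper (now with $\partial_1'$ playing the role of the compressing curve and $\partial_2'$ the cut-reducing one) until a reducing child appears; this terminates because each step strictly decreases the number of punctures inside the relevant cut-reducing curve.  The point you are missing is that one must \emph{change the pants decomposition} by swapping a curve for a genuinely new one, not merely apply a self-homeomorphism of a fixed complementary region.
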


\begin{proof}
Suppose that $\partial_{1}$ is compressing for $\alpha_{ij}$ and $\partial_{2}$ and $\partial_{3}$ are cut-reducing for $\partial\mathcal{D}_{i}$. For simplicity, let $\partial_{1}$ bounds an even number of punctures $\Pi_{1}'\subsetneq\Pi_{1}:=F_{1}\cap\Sigma$, following that $\partial_{2}$ and $\partial_{3}$ bound the remaining punctures $\Pi_{1}-\Pi_{1}'$. Let $\partial\pi_{\partial_{2}}^{\text{in}}=\partial_{2}\cup\partial_{1}'\cup\partial_{2}'$ and assume that $\partial_{2}'$ is cut-reducing for $\partial\mathcal{D}_{i}$, following that $\partial_{2}'$ is cut-reducing and $\partial_{1}^{'}$ is either reducing or compressing from Lemma \ref{lem:efficient defining pair 4} as in Figure 14. If $\partial_{1}'$ is reducing for $\partial\mathcal{D}_{i}$, then we can transform $\partial_{2}$ to $\widehat{\partial_{2}}$ in $\Sigma-F$, obtaining the \textbf{Case2} where $\partial_{1}'$ is reducing and both $\partial_{2}'$ and $\widehat{\partial_{2}}$ are cut-reducing. Suppose that $\partial_{1}'$ is compressing for $\alpha_{ij}$. First, we transform the cut-reducing curve $\partial_{2}$ to $\widehat{\partial_{2}}$ so that $\partial E'=\partial D_{+}^{2}\cup\widehat{\partial_{2}}\cup\partial_{1}'\cup\partial_{2}'$, where $E'$ is the new punctured sphere after the transformation. After that, we transform the cut-reducing curve $\partial_{2}'$ to $\widehat{\partial_{2}'}$ in $\Sigma-F$ so that $\partial E''=\partial D_{+}^{2}\cup\widehat{\partial_{2}'}\cup x\cup y$, where $\partial\pi_{\partial_{2}'}^{\text{in}}=\partial_{2}'\cup x\cup y$. Consequently, we obtain the \textbf{Case2} where $x$ is reducing and $y$ and $\partial_{2}''$ are cut-reducing.


\begin{figure}[t]
\center
\includegraphics[width=120mm]{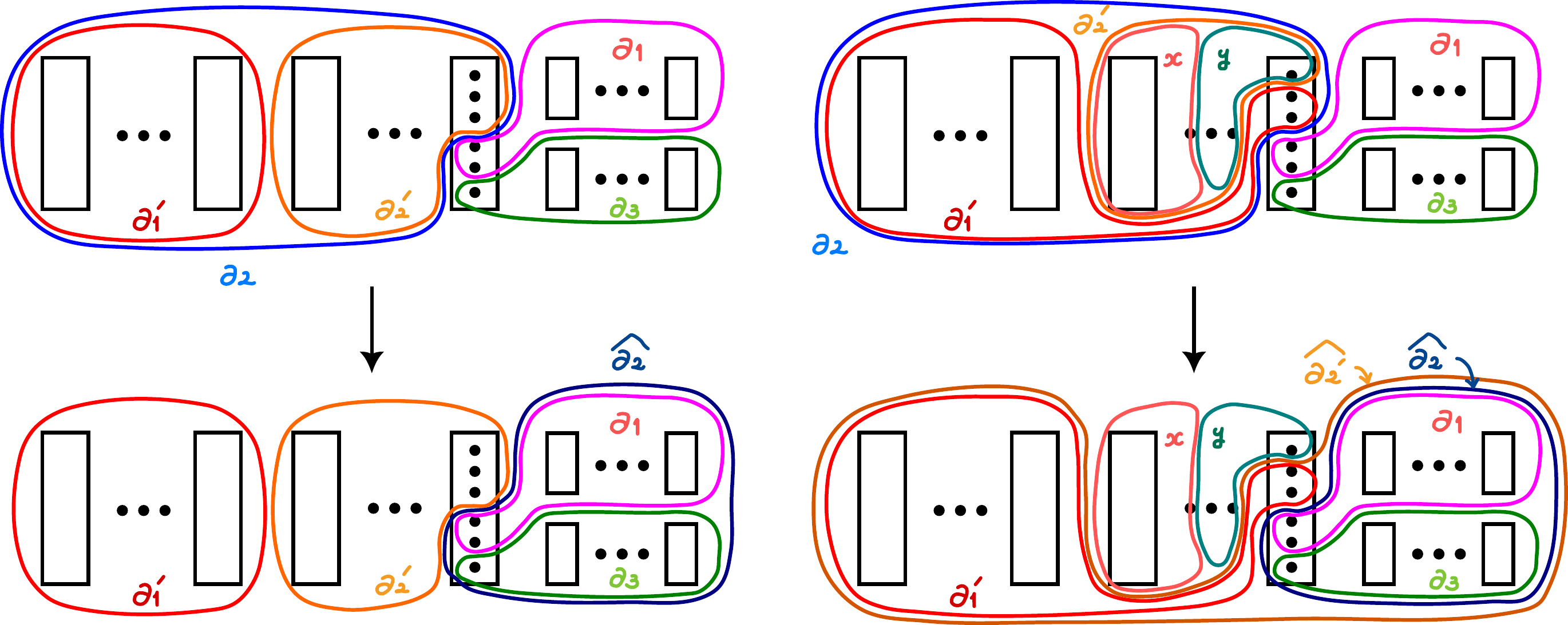}
\caption{Appropreate homeomorphisms on $\Sigma-F$ transforning the three curves in \textbf{Case3} to ones in \textbf{Case1} or \textbf{Case2}.}
\label{fig:lemma 4 1 6}
\end{figure}

\end{proof}

\begin{lem}
Let $F$ be a finite distant sum of standard unknotted surfaces. Let $n(F)>0$ denote the number of standard tori in $F$ and $T_{1},\cdots,T_{n(F)}$ the standard tori in $F$. Let $(p_{ij}^{i},p_{ki}^{i})\in{P_{c}(\alpha_{ij})\times P_{c}(\alpha_{ki})}$ be an efficient defining pair for the unlink $\partial\mathcal{D}_{i}=\alpha_{ij}\cup_{\Sigma}\alpha_{ki}$, where each $\alpha_{ij}$ is the trivial tangle forming the spine of an unstabilized bridge trisecion of $F$. For each $i\in{\{1,2,3\}}$ and $l\in{\{1,\cdots,n(F)\}}$, there exist a cut-reducing curve $\gamma_{l}^{i}\in{\psi_{i}}$ and a cut-reducing 2-sphere $Q_{\gamma_{l}^{i}}\subset{\partial X_{i}}$ for the unlink $\partial\mathcal{D}_{i}$ satisfying
\begin{itemize}
\item[(i)] $Q_{\gamma_{l}^{i}}\cap\Sigma=\gamma_{l}^{i}$,
\item[(ii)] $|Q_{\gamma_{l}^{i}}\cap\partial\mathcal{D}_{i}|=|Q_{\gamma_{l}^{i}}\cap(\partial\mathcal{D}_{i}\cap T_{l})|=2$, and
\item[(iii)] each $\gamma_{l}^{i}$ bounds exactly three punctures of $T_{l}\cap\Sigma$.
\end{itemize}
\label{lem:main lemma 1}
\end{lem}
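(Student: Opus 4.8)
The plan is to restrict a given efficient defining pair to a single torus summand via Lemma~\ref{lem:bridge trisection of distant sum} and there to pin down the unique common curve.

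I would first use Lemma~\ref{lem:bridge trisection of distant sum} to assume $\mathcal{T}=\mathcal{T}_1\sqcup\cdots\sqcup\mathcal{T}_n$, each $\mathcal{T}_m$ an unstabilized, hence standard, bridge trisection of $F_m$; after reindexing, every torus summand $T_l$ carries the $(3;1)$-bridge trisection, so $\Pi_{T_l}:=T_l\cap\Sigma$ consists of exactly six punctures. Two observations drive the argument. \textbf{(A)} A one-component bridge split link has no reducing curve: a reducing sphere is disjoint from the link, so the link lies in one of the two balls it bounds, forcing the dual curve to cobound an unpunctured disk --- impossible for an essential curve. In particular the unknot $\partial\mathcal{D}_i\cap T_l$, sitting in $3$-bridge position, has no reducing curve. \textbf{(B)} By Lemmas~\ref{lem:efficient defining pair 1} and~\ref{lem:efficient defining pair 2} (together with Lemma~5.10 of~\cite{Kirby_Thompson_1}), any curve of $p_{ij}^{i}$ bounding an odd number of punctures of $\Sigma$ lies in $\psi_i$ and is cut-reducing for $\partial\mathcal{D}_i$; and an essential curve on a six-punctured sphere bounding an odd number of punctures bounds exactly three, since one or five would give a once-punctured disk. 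Hence it suffices to produce, for each $i$ and $l$, a curve of $p_{ij}^{i}$ bounding exactly three punctures, all in $\Pi_{T_l}$: such a curve lies in $\psi_i$, is cut-reducing, and bounds three punctures of $T_l\cap\Sigma$ as in (iii); its two cut disks, one in $B_{ij}$ and one in $B_{ki}$ and both taken inside the $3$-balls cut off by the decomposing sphere separating $T_l$, are each met once by an arc of $\partial\mathcal{D}_i$ that has an endpoint among those three punctures and so lies on $T_l$, and gluing them gives a cut-reducing sphere $Q_{\gamma_l^{i}}\subset\partial X_i$ meeting $\partial\mathcal{D}_i$ in two points, both on $T_l$, which yields (i) and (ii).

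To find such a curve I would localize. The box lemmas~\ref{lem:efficient defining pair 4}--\ref{lem:efficient defining pair 7} show that restricting $p_{ij}^{i}$ and $p_{ki}^{i}$ to the six-punctured subsphere carrying $\Pi_{T_l}$ yields efficient pants decompositions for $\alpha_{ij}\cap T_l$ and $\alpha_{ki}\cap T_l$, and that --- since the pants distance of an efficient defining pair is additive over the distant summands, each moved curve being displaced within its own box --- this restricted pair is again an efficient defining pair, now for the $3$-bridge split unknot $\partial\mathcal{D}_i\cap T_l$. Lemma~\ref{lem:efficient defining pair 1} then forces its number of common curves to be $b+c-3=3+1-3=1$; the single common curve is reducing or cut-reducing for $\partial\mathcal{D}_i\cap T_l$, by (A) it is not reducing, so it is cut-reducing, and by (B) it bounds exactly three of the six punctures of $\Pi_{T_l}$. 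Transporting it back into $\Sigma$ produces $\gamma_l^{i}$. (When $n=n(F)=1$ this is the whole argument, no localization being needed.)

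The step I expect to resist is this localization --- making precise, via the box lemmas and a homeomorphism of $\Sigma-F$ as in Lemma~\ref{lem:efficient defining pair 7}, both that the six punctures of a torus summand are grouped together by every efficient defining pair and that the pants distance is additive over the summands, so the restricted pair is genuinely an efficient defining pair of the sublink. The torus-specific input, that the single common curve is cut-reducing rather than reducing, is by contrast soft once observation (A) is available, and is exactly the mechanism behind the equalities $\mathcal{L}(\mathcal{T})=\mathcal{L}^{*}(\mathcal{T})=3$ of Example~\ref{ex:Kirby-Thompson of standard surfaces}.
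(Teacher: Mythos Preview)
Your strategy is genuinely different from the paper's. The paper does not localize to a summand; it runs a direct case analysis on the curves of $p_{ij}^{i}\cup p_{ki}^{i}$, starting from the outermost curves $\{\partial_{1},\partial_{2},\partial_{3}\}$ of Lemma~\ref{lem:efficient defining pair 7} and repeatedly passing to inner pairs of pants via Lemmas~\ref{lem:efficient defining pair 3}--\ref{lem:efficient defining pair 6}, tracking in each case how many punctures of $\Pi_{T}$ are enclosed, until a curve bounding exactly three is forced. Your observations (A) and (B) are correct and are exactly the mechanism that makes the single-torus case immediate, but the paper never reduces the general case to that one.

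The gap is precisely where you flag it, and it is a genuine one. The box lemmas control the puncture content of \emph{individual} curves --- each bounds full boxes $\bigcup_{l\le L}\Pi_{l}$ plus a partial tail from $\Pi_{L+1}$ --- but they do not produce a curve of $\psi_{i}$ enclosing exactly $\Pi_{T_{l}}$, and without such a curve there is no six-punctured subsphere to which $p_{ij}^{i}$ and $p_{ki}^{i}$ \emph{restrict} as pants decompositions. A cut-reducing curve in $\psi_{i}$ may well bound $\Pi_{1}\cup\cdots\cup\Pi_{L}\cup\{a_{1}^{L+1},a_{2}^{L+1},a_{3}^{L+1}\}$ with $\Pi_{T_{l}}$ among the full boxes; such a curve neither lies inside nor outside any putative $T_{l}$-subsphere. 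Lemma~\ref{lem:efficient defining pair 7} only normalizes the outermost pair of pants, not the nesting pattern further in. Likewise, the additivity of pants distance over summands that you invoke is not a consequence of the cited lemmas: a curve $c\in g_{ij}^{i}$ can enclose several full boxes together with its partial tail (Lemma~\ref{lem:efficient defining pair 5}(i)), so the four-holed sphere supporting the A-move $c\mapsto c'$ need not sit inside a single summand, and there is no evident bijection between moved curves and summands. Proving either of these localization claims would amount to the same inductive bookkeeping the paper carries out directly. If you want to salvage the approach, the honest target is not ``restrict and recognize the $(3;1)$ picture'' but rather ``show that some curve of $\psi_{i}$ encloses an odd number of punctures of $\Pi_{T_{l}}$''; once you have that, Lemma~\ref{lem:efficient defining pair 4}(i) forces that odd number to be one, three, or five, and your induction on $L$ together with Lemma~\ref{lem:efficient defining pair 3} can then chase it down to three --- which is essentially the paper's \textbf{Case1}/\textbf{Case2} analysis.
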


\begin{proof}
Let $T\in\{T_{1},\cdots,T_{n(F)}\}$ be an arbitrary torus component in $F$ and $(p_{ij}^{i},p_{ki}^{i})\in{P_{c}(\alpha_{ij})\times P_{c}(\alpha_{ki})}$ be an arbitrary efficient defining pair for $\partial\mathcal{D}_{i}$. Let $\Pi_{l}:=T_{l}\cap\Sigma$ for each $l\in{\{1,\cdots,n(F)\}}$ and $\Pi_{T}:=T\cap\Sigma$. Let $c\in{p_{ij}^{i}\cup p_{ki}^{i}}$ be an arbitrary essential simple closed curve, following, from Lemma \ref{lem:efficient defining pair 4}, Lemma \ref{lem:efficient defining pair 5}, and Lemma \ref{lem:efficient defining pair 6}, that it bounds the following punctures of $\Pi:=F\cap\Sigma$:
\[
\bigcup_{l=1}^{L}\Pi_{l}\cup\left\{a_{1}^{L+1},\cdots,a_{m_{L+1}}^{L+1}\right\}\subset{\Pi},
\]
where $0\leq L<n-1$, $1\leq m_{l}\leq|\Pi_{l}|$, and $\bigcup\Pi_{l}=\phi$ if $L=0$. We show this lemma in two cases.\\
\\
\textbf{Case1} $\{a_{1}^{L+1},\cdots,a_{m_{L+1}}^{L+1}\}\subsetneq\Pi_{T}$.

Let $\{\partial_{1},\partial_{2},\partial_{3}\}$ be c-reducing curves for the unlink $\partial\mathcal{D}_{i}$ as in Lemma \ref{lem:efficient defining pair 7}, i.e., $\partial_{i}\in{p_{ij}^{i}\cup p_{ki}^{i}}$ and $\Pi\subset S_{\partial_{1}}^{\text{in}}\cup S_{\partial_{2}}^{\text{in}}\cup S_{\partial_{3}}^{\text{in}}$ hold.\\
\textbf{Subcase1a} $c\in{\{\partial_{1},\partial_{2},\partial_{3}\}}$.


\begin{figure}[h]
\center
\includegraphics[width=60mm]{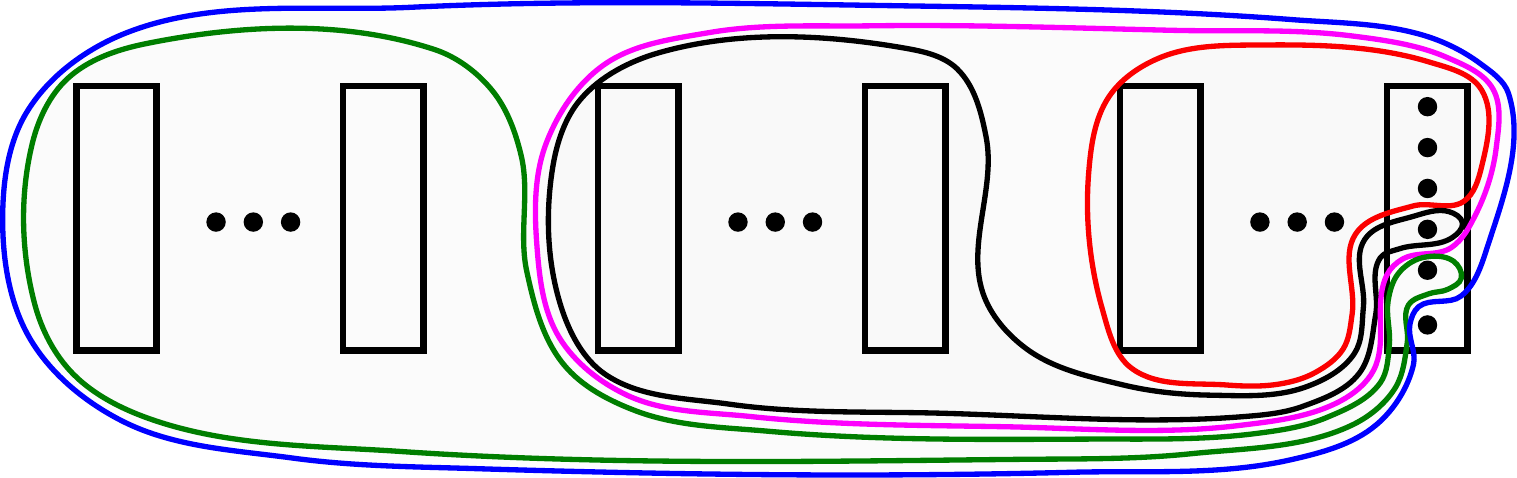}
\caption{The rightmost box represents the set of punctures $\Pi_{T}$ and the red curve is the desired cut-reducing curve bounding exactly three punctures of $\Pi_{T}$.}
\label{fig:lemma 4 1 7 1}
\end{figure}

From Lemma \ref{lem:efficient defining pair 7}, we know that $m_{L+1}\in{\{1,3,5\}}$ and $c$ is cut-reducing for $\partial\mathcal{D}_{i}$. Without loss of generality, we can assume $c=\partial_{1}$. If $m_{L+1}=1$, then either $\partial_{2}$ or $\partial_{3}$ must bound the remaining five punctures of $\Pi_{T}$ from Lemma \ref{lem:efficient defining pair 6} and it is sufficient to prove the cases $m_{L+1}\in{\{3,5\}}$. If $m_{L+1}=3$, then $c=\partial_{1}$ bounds exactly three punctures of $\Pi_{T}$ and we have the desired result. Now, let $m_{L+1}=5$ and show the existance of some cut-reducing curve inside $c$ bounding exactly three punctures of $\Pi_{T}$ by induction with respect to $0\leq L<n-1$.

 If $L=0$, then we know that, inside $c$, there exists some cut-reducing curve bounding three punctures of $\Pi_{T}$ from Lemma \ref{lem:efficient defining pair 3}, Lemma \ref{lem:efficient defining pair 4}, and Lemma \ref{lem:efficient defining pair 5}. Suppose that the claim holds for some integer $0\leq L<n-2$ and $c$ bounds $\bigcup_{l=1}^{L+1}\Pi_{l}\cup\left\{a_{1}^{L+2},\cdots,a_{5}^{L+2}\right\}\subset{\Pi}$. From Lemma \ref{lem:efficient defining pair 4}, letting $\partial\pi_{c}^{\text{in}}=:c\cup\partial_{1}'\cup\partial_{2}'$ and $\partial_{1}'$ be cut-reducing, $\partial_{1}'$ bounds either one, three, or five punctures of $\{a_{1}^{L+2},\cdots,a_{5}^{L+2}\}$. If it bounds three, then $\partial_{1}'$ is the desired curve, and if it bounds five, then we know that there exists some cut-reducing curve inside $\partial_{1}'$ bounding exactly three punctures of $\{a_{1}^{L+2},\cdots,a_{5}^{L+2}\}$ using the induction hypothesis. If $\partial_{1}'$ bounds one puncture of $\{a_{1}^{L+2},\cdots,a_{5}^{L+2}\}$, then it follows that $\partial_{2}'$ is a compressing curve for $\alpha_{ij}$ bounding the remaining four punctures of $\{a_{1}^{L+2},\cdots,a_{5}^{L+2}\}$. From Lemma \ref{lem:efficient defining pair 5}, letting $\partial\pi_{\partial_{2}'}^{\text{in}}=:\partial_{2}'\cup\partial_{1}''\cup\partial_{2}''$, either $\partial_{1}''$ or $\partial_{2}''$ must bound exactly three punctures of $\Pi_{T}$, obtaining the desired cut-reducing curve (see the Figure \ref{fig:lemma 4 1 7 1}).\\
\textbf{Subcase1b} $c\notin{\{\partial_{1},\partial_{2},\partial_{3}\}}$.

Without loss of generality, we can assume that $c$ is inside $\partial_{1}$, where $\partial_{1}$ is one of the outer most curves included in $D_{+}^{2}$. Then, there exist essential simple closed curves $\partial_{1}', \partial_{2}'\in{p_{ij}^{i}\cup p_{ki}^{i}}$ such that $c\cup\partial_{2}'\subset S_{\partial_{1}'}^{\text{in}}\subseteq S_{\partial_{1}}^{\text{in}}$. Let $0\leq n_{L+1}<6$ denote the number of punctures of $\Pi_{T}$ bounded by the $\partial_{2}'$, following that $\partial_{1}'$ bounds $0<m_{L+1}+n_{L+1}\leq6$ punctures of $\Pi_{T}$, where $0<m_{L+1}<6$ denotes the number of punctures of $\Pi_{T}$ bounded by $c$. We note that, from Lemma \ref{lem:efficient defining pair 3}, either $m_{L+1}$ or $n_{L+1}$ is odd. If $m_{L+1}=3$, $n_{L+1}=3$, or $m_{L+1}+n_{L+1}=3$ holds, then $c$, $\partial_{2}'$, or $\partial_{1}'$ is a desired cut-reducing curve. If $m_{L+1}=5$ or $n_{L+1}=5$ holds, then we know that there exists a desired curve bounding exactly three punctures of $\Pi_{T}$ inside $c$ or $\partial_{2}'$ as in \textbf{Subcase1a}. If $m_{L+1}=4$ or $n_{L+1}=4$ holds, then we can find a desired cut-reducing curve inside $c$ or $\partial_{2}'$ as in \textbf{Subcase1a}. 

The remaining cases are $(m_{L+1},n_{L+1})\in{\{(1,0),(1,1)\}}$. We show the case $(m_{L+1},n_{L+1})=(1,0)$. Let $\gamma_{1},\gamma_{2},\cdots,\gamma_{k}\in{p_{ij}^{i}\cup p_{ki}^{i}}$ be essential curves satisfying $\gamma_{1}=\partial_{1}'$, $\gamma_{k}=\partial_{1}$, $S_{\gamma_{1}}^{\text{in}}\subsetneq S_{\gamma_{2}}^{\text{in}}\subsetneq\cdots\subsetneq S_{\gamma_{k}}^{\text{in}}$, and $\gamma_{k'}\subset\partial S_{\gamma_{k'+1}}^{\text{in}}$ for each $1\leq k'\leq k-1$ and $k\geq1$(see Figure \ref{fig:16}).
\begin{figure}[b]
\center
\includegraphics[width=100mm]{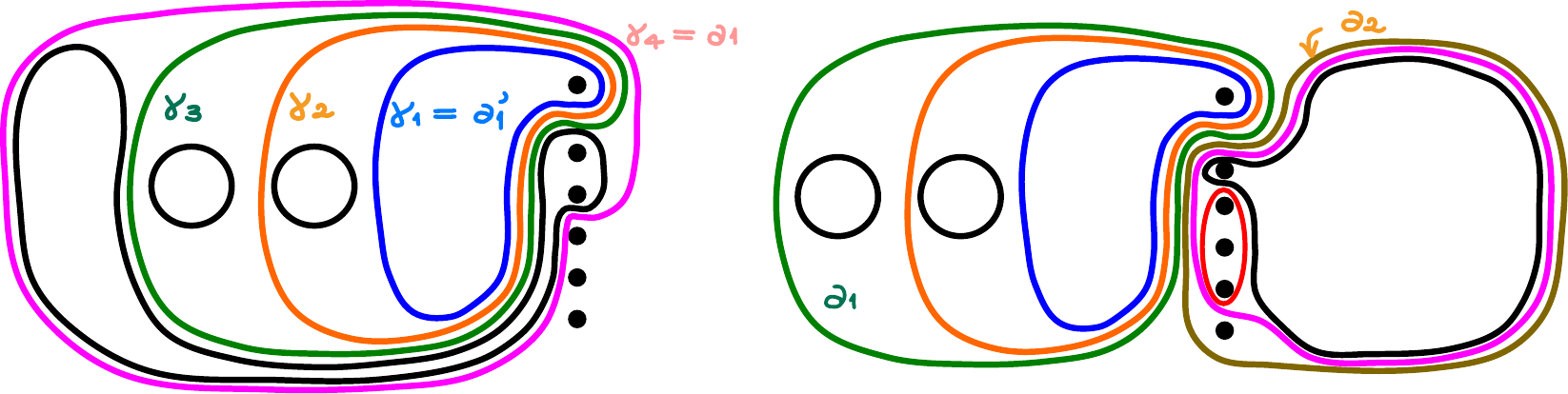}
\caption{An image of $\gamma'$s on the left and the desired curve depicted as the red curve on the right, where the six dots represent the punctures of $\Pi_{T}$.}
\label{fig:16}
\end{figure}
If there exists some integer $k'\in{\{1,\cdots,k\}}$ such that $\gamma_{k'}$ bounds exactly three punctures of $\Pi_{T}$, then the curve is a desired one. Suppose that none of them bounds three punctures of $\Pi_{T}$. If some $\gamma_{k'}$ bounds exactly four or five punctures of $\Pi_{T}$, then we have a desired cut-reducing curve inside $\gamma_{k'}$ as in \textbf{Subcase1a}. Using Lemma \ref{lem:efficient defining pair 3}, if some curve $\gamma_{k'}$ bounds two punctures of $\Pi_{T}$, the curve $\gamma_{k'+1}$ must bound three or five punctures of $\Pi_{T}$, concluding the result. Assume that every curve $\gamma_{k'}$ bound exactly one puncture of $\Pi_{T}$. From Lemma \ref{lem:efficient defining pair 7}, either $\partial_{2}$ or $\partial_{3}$ bounds remaining five punctures of $\Pi_{T}$ and, inside it, we can find the desired cut-reducing curve as in \textbf{Subcase1a} (see Figure \ref{fig:16}).\\
\textbf{Case2} $\Pi_{T}=\Pi_{l}$ for some $l\in{\{1,\cdots,L\}}$.

We show this by induction with respect to $1\leq L<n-1$. If $L=1$, $c$ bounds $\Pi_{T}\cup\{a_{1}^{2},\cdots,a_{m_{2}}^{2}\}$, where $1\leq m_{2}\leq|\Pi_{2}|$. Let $\partial\pi_{c}^{\text{in}}=:c\cup\partial_{1}'\cup\partial_{2}'$. If $m_{2}=1$, $\partial_{1}'$ is reducing, that is, it bounds $\Pi_{T}$ from Lemma \ref{lem:efficient defining pair 4}, and we can find a desired cut-reducing curve inside $\partial_{1}'$ using Lemma \ref{lem:efficient defining pair 3}. If $m_{2}=2$, letting $\partial\pi_{\partial_{1}'}^{\text{in}}=:\partial_{1}'\cup\partial_{1}''\cup\partial_{2}''$, $\partial_{1}''$ is cut-reducing such that it bounds all punctures of $\Pi_{T}$ and one puncture of $\{a_{1}^{2},a_{2}^{2}\}$ from Lemma \ref{lem:efficient defining pair 5}. From the result of $m_{2}=1$, we know that there exists a desired curve inside $\partial_{1}'$. If $m_{2}=3$, $\partial_{1}'$ is a desired one. If $m_{2}=4$, from Lemma \ref{lem:efficient defining pair 5}, there exists a cut-reducing curve $\partial_{1}''$ inside $\partial_{1}'$ such that $\partial_{1}''$ bounds three punctures of $\{a_{1}^{2},a_{2}^{2},a_{3}^{2},a_{4}^{2}\}$. Applying the case of $m_{2}=3$, we find a desired curve inside $\partial_{1}''$. If $m_{2}=5$, letting $\partial_{1}'$ be cut-reducing, $\partial_{1}'$ bounds either one, three, or five punctures of $\{a_{1}^{2},a_{2}^{2},a_{3}^{2},a_{4}^{2},a_{5}^{2}\}$. If it bounds three, then the $\partial_{1}'$ is a desired one. If $\partial_{1}'$ bounds one puncture, then we can find the desired curve inside it applying the case of $m_{2}=1$. If $\partial_{1}'$ bounds five punctures, then the other $\partial_{2}'$ curve bounds exactly six punctures $\Pi_{T}$ and there exists a desired cut-reducing curve inside $\partial_{2}'$ as in the case of $m_{2}=1$. Suppose that $m_{2}=6$, following that both $\partial_{1}'$ and $\partial_{2}'$ are either reducing or cut-reducing from Lemma \ref{lem:efficient defining pair 6}. Suppose now the former case and $\partial_{1}'$ bounds $\Pi_{T}$. Applying the case of $m_{2}=1$, we have the desired curve inside $\partial_{1}'$. Suppose the later case, that is, both curves are cut-reducing. Possible cases of the number of punctures bounded by $\partial_{1}'$ is one, three, or five, and we can find a desired curve in any cases as in the cases of $m_{2}=1$, $3$, or $5$.

Finally, we suppose that there exists a desired cut-reducing curve inside $\partial_{1}'$ for any integer $1\leq L<n-2$. Let $c$ be an essential curve bounding $\bigcup_{l=1}^{L+1}\Pi_{l}\cup\{a_{1}^{L+2},\cdots,a_{m_{L+2}}^{L+2}\}$ and $\partial\pi_{c}^{\text{in}}=:c\cup\partial_{1}'\cup\partial_{2}'$. If $m_{L+2}\in{\{1,2,3,4,5\}}$, then $\partial_{1}'$ and $\partial_{2}'$ are c-reducing and one of them bounds six punctures $\Pi_{T}$, concluding the result using the induction hypothesis. Suppose that $m_{L+2}=6$. In this situation, $c$ is reducing for $\partial\mathcal{D}_{i}$ and both $\partial_{1}'$ and $\partial_{2}'$ are either reducing or cut-reducing from Lemma \ref{lem:efficient defining pair 6}. If they are reducing, then a desired cut-reducing curve is inside one of them using the induction hypothesis. Assume that $\partial_{1}'$ and $\partial_{2}'$ are cut-reducing. If $\partial_{1}'$ does not bound $\Pi_{T}$, then we can find a desired curve inside $\partial_{2}'$ by the induction hypothesis. If $\partial_{1}'$ bounds some odd number of punctures of $\Pi_{T}$, then $\partial_{2}'$ bounds the remaining odd number of punctures of $\Pi_{T}$ and we have a desired cut-reducing curve inside one of them applying the same discussion in \textbf{Case1}. This completes the proof of Lemma \ref{lem:main lemma 1}.

\end{proof}

Finally, we prove the follwing lemma to prove Theorem \ref{thm:main theorem}.

\begin{lem}
Let $\mathcal{T}$ be an unstabilized bridge trisection of a finite distant sum of standard surfaces $F$ whose spine and bridge surface are $\bigcup_{i\not=j}(B_{ij},\alpha_{ij})$ and $\Sigma$, respectively. Let $(p_{ij}^{i},p_{ki}^{i})\in{P_{c}(\alpha_{ij})\times P_{c}(\alpha_{ki})}$ be an efficient defining pair for the unlink $\partial\mathcal{D}_{i}=\alpha_{ij}\cup\alpha_{ki}$ for each $\{i,j,k\}=\{1,2,3\}$. Then, 
$d^{*}(p_{ij}^{i},p_{ki}^{i})\geq n(F)$ holds for each $\{i,j,k\}=\{1,2,3\}$, where $n(F)$ denotes the number of standard tori included in $F$.
\label{lem:main lemma 2}
\end{lem}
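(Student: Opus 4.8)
The plan is to show that, for each fixed $i$, an efficient defining pair $(p_{ij}^{i},p_{ki}^{i})$ requires at least $n(F)$ $A$-moves (or length-one $C^{*}$-edges) to pass from $p_{ij}^{i}$ to $p_{ki}^{i}$, by producing $n(F)$ curves that are ``witnesses'' to distance. The key input is Lemma \ref{lem:main lemma 1}: for each torus component $T_{l}$ ($l=1,\dots,n(F)$) there is a cut-reducing curve $\gamma_{l}^{i}\in\psi_{i}=p_{ij}^{i}\cap p_{ki}^{i}$ whose cut-reducing sphere meets $F$ in exactly two points, both lying on $T_{l}$, and such that $\gamma_{l}^{i}$ bounds exactly three punctures of $T_{l}\cap\Sigma$. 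First I would fix such a collection $\{\gamma_{1}^{i},\dots,\gamma_{n(F)}^{i}\}$ and observe that, since distinct tori are distant from one another, the punctures $T_{l}\cap\Sigma$ lie in pairwise disjoint regions of $\Sigma$; consequently the curves $\gamma_{l}^{i}$ can be taken pairwise disjoint and non-isotopic, and each $\gamma_{l}^{i}$ separates the three punctures of its own torus that it bounds from everything else.

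Next I would analyze what any shortest path $p_{ij}^{i}\to p_{ki}^{i}$ in $C^{*}(\Sigma)$ must do near a single $\gamma_{l}^{i}$. The curve $\gamma_{l}^{i}$ bounds an odd number ($=3$) of punctures of $T_{l}$, so it is a cut curve for both $\alpha_{ij}$ and $\alpha_{ki}$, hence it lies in the common part $\psi_{i}$ and is never the ``active'' curve of a move. The relevant subsurface is the once-punctured-by-$\gamma_{l}^{i}$ side containing the three $T_{l}$-punctures: inside it, $p_{ij}^{i}$ restricts to a pants decomposition efficient for $\alpha_{ij}$ and $p_{ki}^{i}$ to one efficient for $\alpha_{ki}$, and because $T_{l}$ is a standard torus these two restrictions are genuinely different pants decompositions of a sphere with (the three punctures and) one boundary coming from $\gamma_{l}^{i}$. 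I would argue that they differ: a pants decomposition efficient for the ``upper'' trivial tangle of a torus cannot coincide with one efficient for the ``lower'' tangle, exactly the phenomenon behind $\mathcal{L}^{*}(\mathcal{T})=3$ in Example \ref{ex:Kirby-Thompson of standard surfaces}. Hence the path must perform at least one move whose active curve lies strictly inside $\gamma_{l}^{i}$.

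Finally I would assemble a counting argument. Since the regions inside the $\gamma_{l}^{i}$ are pairwise disjoint (distant tori), a single $A$-move or $C^{*}$-move, whose active curve $c_{n-3}\leftrightarrow c_{n-3}'$ lives in exactly one four-punctured subsphere, can be ``charged'' to at most one index $l$. Combining this with the previous paragraph — each $l\in\{1,\dots,n(F)\}$ forces at least one move charged to it — gives $d^{*}(p_{ij}^{i},p_{ki}^{i})\ge n(F)$, which is the assertion of the lemma; summing over the three values of $i$ then yields $\sum_{i\neq j}d^{*}(p_{ij}^{i},p_{ki}^{i})\ge 3n(F)$ and feeds Theorem \ref{thm:main theorem}.

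The main obstacle I anticipate is the middle step: rigorously ruling out that the restrictions of $p_{ij}^{i}$ and $p_{ki}^{i}$ to the region inside $\gamma_{l}^{i}$ agree, i.e. that one cannot simultaneously be efficient for both tangles of a standard torus in a way that costs nothing. This requires knowing that the two trivial tangles into which $\gamma_{l}^{i}$ decomposes a bridge sphere of the standard torus genuinely have no common efficient pants decomposition — equivalently, that $\gamma_{l}^{i}$ alone cannot certify reducibility of the torus's bridge trisection, which is guaranteed because the standard torus is irreducible (it is not a distant or connected sum) and unstabilized, so by Lemma \ref{lem:reducible and stabilized} there is no single c-reducing curve common to all three tangles; one then has to upgrade this from ``not all three agree'' to the needed cost estimate, presumably by a local version of the computation that gives $\mathcal{L}^{*}(\mathcal{T})=3$.
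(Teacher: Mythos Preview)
The core issue is that you are proving the wrong inequality. The quantity $d^{*}(p_{ij}^{i},p_{ki}^{i})$ appearing in the lemma is already equal to $b-c_{i}$ (the paper records this just before defining the $\mathcal{L}$-invariant), and for a distant sum containing $n(F)$ tori one has $b-c_{i}\ge 2n(F)$ since each torus contributes a $(3;1)$-piece. So the literal assertion is trivially true and says nothing about $\mathcal{L}^{*}$, which is a sum of the distances $d^{*}(p_{ij}^{i},p_{ij}^{j})$ between the two efficient pants decompositions attached to the \emph{same} tangle $\alpha_{ij}$, one coming from the defining pair for $\partial\mathcal{D}_{i}$ and the other from the pair for $\partial\mathcal{D}_{j}$. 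The paper's own proof makes this clear: the path analyzed there is $p_{ij}^{i}\to p_{ij}^{j}$, not $p_{ij}^{i}\to p_{ki}^{i}$; the superscripts in the displayed inequality of the lemma are evidently a typo, and your whole argument is aimed at the typo rather than at what is needed for Theorem \ref{thm:main theorem}.

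Once you target the correct path, the mechanism also changes. The paper does not argue that some move occurs ``inside'' $\gamma_{l}^{i}$; it argues that the curve $\gamma_{l}^{i}$ \emph{itself} must be removed along any path $p_{ij}^{i}\to p_{ij}^{j}$. If $\gamma_{l}^{i}$ survived into $p_{ij}^{j}$ then, bounding an odd number of punctures, it would lie in $\psi_{j}$ by Lemma \ref{lem:efficient defining pair 1} and Lemma \ref{lem:efficient defining pair 2}, hence be c-reducing for $\alpha_{jk}$ as well; it would then be c-reducing for all three tangles, and Lemma \ref{lem:reducible and stabilized} applied to the $T_{l}$-summand would force the torus trisection to be reducible or stabilized, a contradiction. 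Since the $\gamma_{l}^{i}$ are pairwise distinct curves of $p_{ij}^{i}\setminus p_{ij}^{j}$ (distinctness is immediate from property (ii) of Lemma \ref{lem:main lemma 1}) and each edge of $C^{*}(\Sigma)$ alters exactly one curve, $d^{*}(p_{ij}^{i},p_{ij}^{j})\ge n(F)$ follows at once. You already isolated this irreducibility obstruction in your final paragraph; the fix is to use it to force the curves $\gamma_{l}^{i}$ themselves to move, rather than to force unspecified activity in regions they bound. Note also that your claim that the regions inside the $\gamma_{l}^{i}$ are pairwise disjoint is unjustified: Lemma \ref{lem:main lemma 1}(iii) only says $\gamma_{l}^{i}$ bounds three punctures of $T_{l}\cap\Sigma$, not that it bounds \emph{only} those three, so these regions can be nested and your charging argument would break down even as a proof of the literal (typo'd) statement.
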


\begin{proof}
Suppose $n(F)>0$. From Lemma \ref{lem:main lemma 1}, there exist $n(F)$ cut-reducing curves $\gamma_{1}^{i},\cdots,\gamma_{k_{\mathcal{T}}}^{i}\in{\psi_{i}}$ satisfying the properties (i), (ii), and (iii) of the lemma. Suppose that $\gamma_{l}^{i}$ is not moved by a shortest path $p_{ij}^{i}\rightarrow p_{ij}^{j}$ for some integer $l\in{\{1,\cdots,k_{\mathcal{T}}\}}$. Then, $\gamma_{l}^{i}\in{p_{ij}^{i}\cap p_{ij}^{j}}$ and this curve is not moved by paths $p_{ij}^{i}\rightarrow p_{ki}^{i}$ and $p_{ij}^{j}\rightarrow p_{jk}^{j}$ from Lemma \ref{lem:efficient defining pair 1} and the curve is c-reducing for each tangle $\alpha_{12}$, $\alpha_{23}$, and $\alpha_{31}$. Considering $\gamma_{l}^{i}$ as the cut-reducing curve for each tangle forming the spine of the unstabilized bridge trisection of the standard unknotted torus $T_{l}$, this implies that the bridge trisection of $T_{l}$ is reducible or stabilized from Lemma \ref{lem:reducible and stabilized}, and this is a contradiction.
\end{proof}

\subsection{Upper bounds for the $\mathcal{L}$-invariants.}

In this subsection, we show some figures where we can see a upper bound for $\mathcal{L}(F)$, where $F$ is a finite distant sum of the standard unknotted surfaces.

\begin{lem}
$\mathcal{L}(F)\leq3n(F)$ holds, where $F$ is a finite distant sum of the standard surfaces. Consequently, $\mathcal{L}^{*}(F)\leq3n(F)$ holds.
\label{lem:upper bound}
\end{lem}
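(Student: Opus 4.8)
The plan is to exhibit, for each finite distant sum $F$ of standard surfaces, an explicit unstabilized bridge trisection $\mathcal{T}$ together with an efficient defining pair $(p_{ij}^{i},p_{ki}^{i})$ for each $\partial\mathcal{D}_{i}$, realizing $\sum_{i\neq j}d(p_{ij}^{i},p_{ij}^{j})\leq 3n(F)$. By Lemma \ref{lem:bridge trisection of distant sum}, an unstabilized bridge trisection of $F$ may be taken to be a distant sum $\mathcal{T}_{1}\sqcup\cdots\sqcup\mathcal{T}_{m}$ of unstabilized bridge trisections of the individual standard summands, and the bridge surface $\Sigma$ is correspondingly the "connect-sum" (along trivial disks away from the punctures) of the bridge surfaces $\Sigma_{1},\ldots,\Sigma_{m}$. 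The key point is that the distance in the pants complex (and the dual curve complex) behaves well under this operation: one can choose pants decompositions on $\Sigma$ that restrict to the chosen efficient decompositions on each $\Sigma_{s}$ and that agree (i.e. contribute nothing to the distance) on all of the "gluing" curves that separate the summands. Thus $\mathcal{L}(\mathcal{T})\leq\sum_{s=1}^{m}\mathcal{L}(\mathcal{T}_{s})$ for a suitable choice, and it remains to bound each $\mathcal{L}(\mathcal{T}_{s})$.

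The second step is the per-summand bound. For a standard summand $F_{s}\neq\mathcal{T}$ (unknotted sphere, projective plane, or Klein bottle), Example \ref{ex:Kirby-Thompson of standard surfaces} already exhibits efficient defining pairs for the unstabilized bridge trisections from Figure \ref{fig:triplane diagrams of standard surfaces} with all three pants distances equal to zero, so these summands contribute $0=3n(F_{s})$. For a standard torus summand $\mathcal{T}_{s}$, the bottom picture of Figure \ref{fig:efficient defining pairs of standard surfaces} gives efficient defining pairs with $\sum_{i\neq j}d(p_{ij}^{i},p_{ij}^{j})\leq 3$, matching $3n(\mathcal{T}_{s})=3$. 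Summing over all summands gives $\mathcal{L}(\mathcal{T})\leq\sum_{s}3n(F_{s})=3n(F)$, hence $\mathcal{L}(F)\leq 3n(F)$. Since $\mathcal{L}\geq\mathcal{L}^{*}$ pointwise (as $P(\Sigma)$ is a subcomplex of $C^{*}(\Sigma)$ with $d^{*}\leq d$), the inequality $\mathcal{L}^{*}(F)\leq 3n(F)$ follows immediately, without any further work.

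Concretely, in the subsection this is carried out by displaying the combined tri-plane diagram of $F$ as the distant union of the standard tri-plane diagrams (Figure \ref{fig:triplane diagrams of standard surfaces}) and the combined efficient defining pairs as the distant union of the pairs in Figure \ref{fig:efficient defining pairs of standard surfaces}, together with the "gluing" curves drawn identically on the upper and lower sides so that they are never moved by a shortest path; these curves are precisely the reducing curves separating the summands, as in the discussion preceding Lemma \ref{lem:efficient defining pair 7}. The figures referenced in the proof of Theorem \ref{thm:main theorem} (Figure \ref{fig:efficient1}, Figure \ref{fig:efficient2}, Figure \ref{fig:efficient3}) supply exactly these explicit pictures for the $n(F)=0$ case and for the torus blocks.

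The main obstacle is making precise and checking the behavior of pants distance under the distant-sum (sphere-tubing) operation on bridge surfaces — specifically, verifying that a shortest A-move path in the combined $P(\Sigma)$ may be taken to fix all separating "gluing" curves and to act blockwise, so that the distances genuinely add rather than merely being bounded by the sum in a way that could be non-tight. Concretely one must argue that the chosen $p_{+}$ and $p_{-}$ on $\Sigma$ differ only within the sub-disk portions coming from individual summands, and that an efficient defining pair for the combined unlink can be assembled from efficient defining pairs of the summands; this is a routine but slightly delicate gluing argument using that the summands are separated by reducing 2-spheres and that efficient decompositions can be chosen to contain the relevant separating curves.
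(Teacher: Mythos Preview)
Your proposal is correct and is essentially the paper's own approach: the paper's entire proof consists of pointing to Figures \ref{fig:efficient1}--\ref{fig:efficient3}, which display precisely the blockwise efficient defining pairs you describe (the per-summand pairs from Figure \ref{fig:efficient defining pairs of standard surfaces} joined by common separating reducing curves). The one point you flag as the ``main obstacle'' --- that the assembled pair is genuinely an \emph{efficient defining pair}, i.e.\ realizes the minimum $d(P_c(\alpha_{ij}),P_c(\alpha_{ki}))$ --- is left implicit in the paper as well; it follows because that minimum equals $b-c_i$ by Lemma \ref{lem:efficient defining pair 1}, and $b-c_i=\sum_s(b_s-c_{i,s})$ is exactly the number of A-moves in the blockwise path.
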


\begin{proof}
The above inequality holds from Figure \ref{fig:efficient1}, Figure \ref{fig:efficient2}, and Figure \ref{fig:efficient3}.
\end{proof}

\begin{figure}[h]
\centering
\includegraphics[width=113mm]{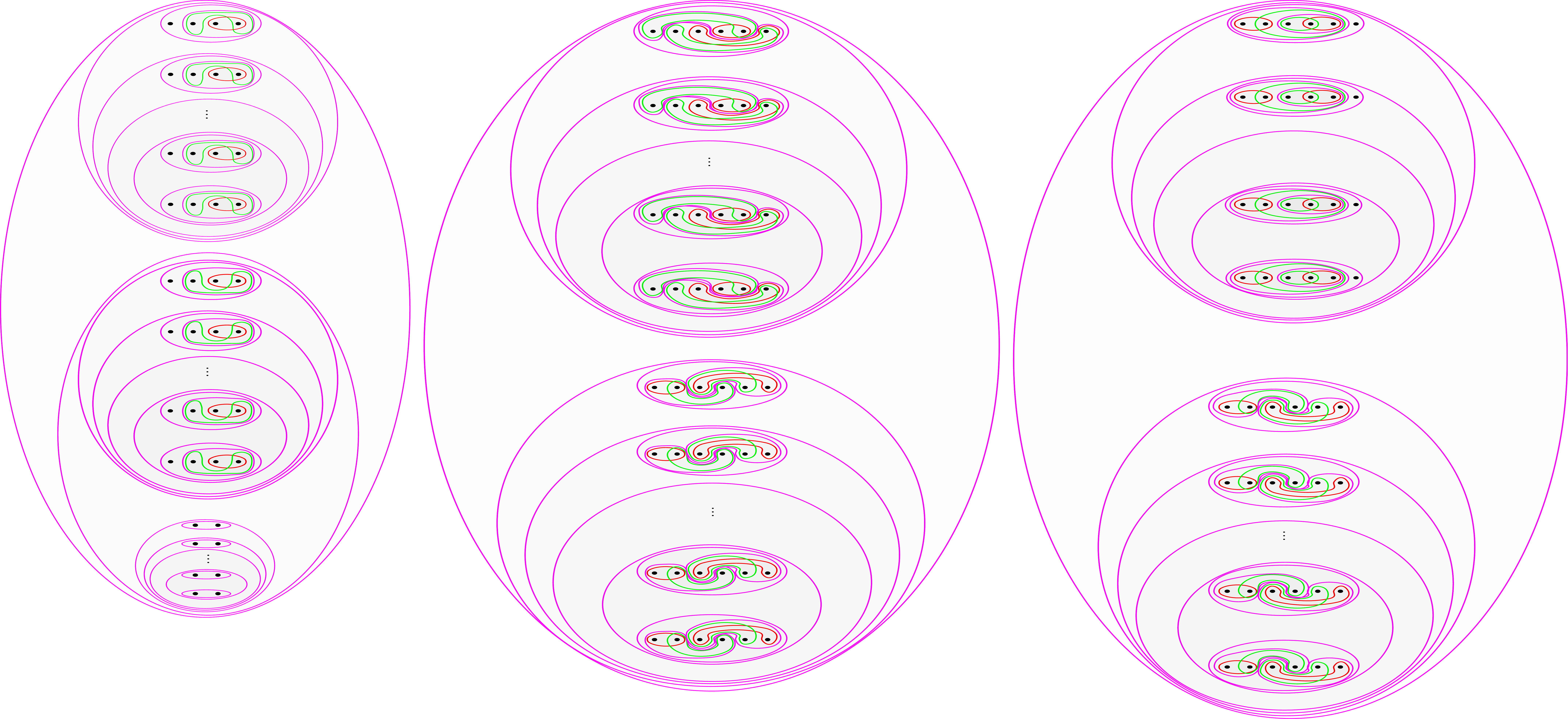}
\caption{An efficient defining pair $(p_{12}^{1},p_{31}^{1})$.}\label{fig:efficient1}
\end{figure}

\begin{figure}[h]
\centering
\includegraphics[width=113mm]{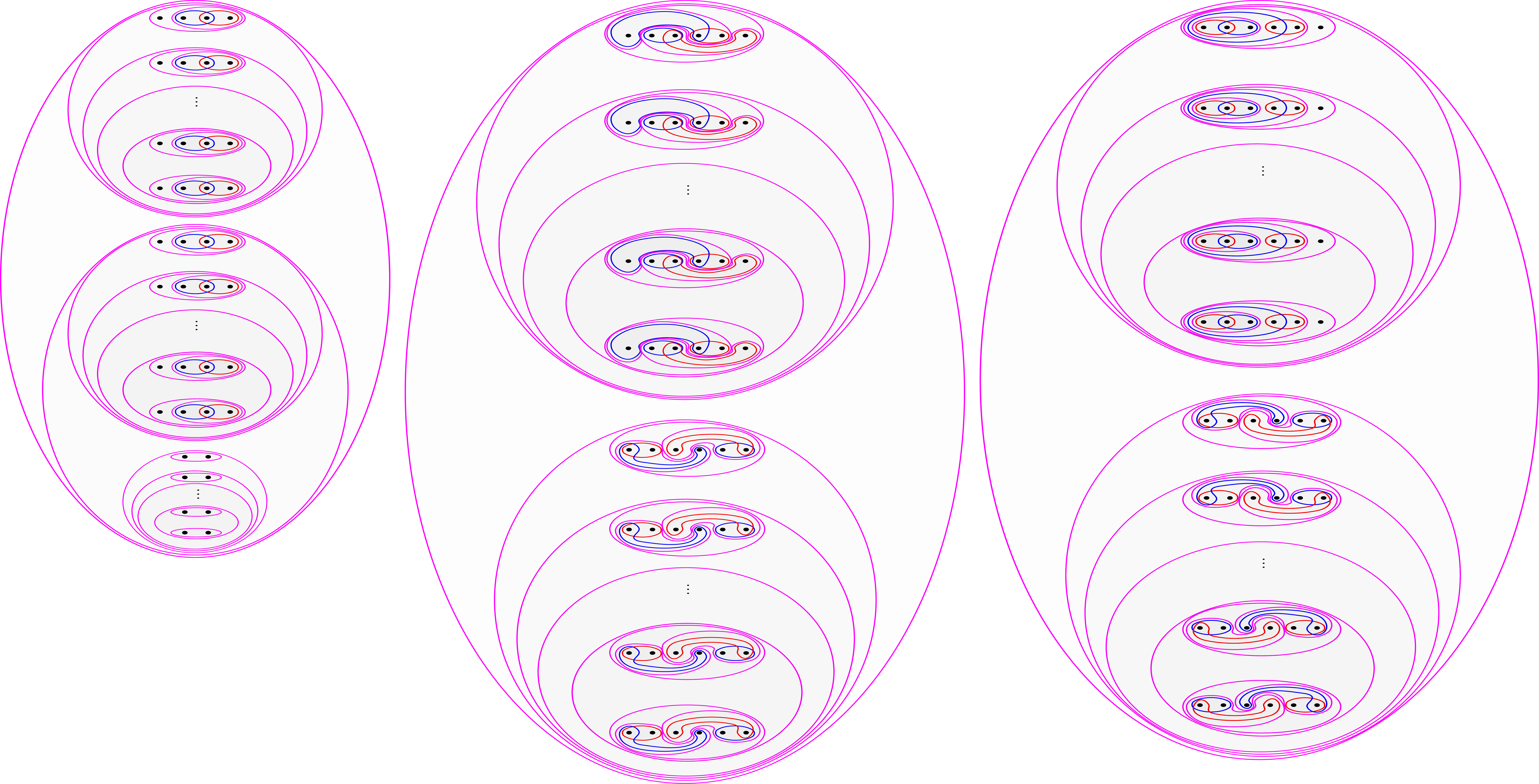}
\caption{An efficient defining pair $(p_{23}^{2},p_{12}^{2})$.}\label{fig:efficient2}
\end{figure}

\begin{figure}[h]
\center
\includegraphics[width=113mm]{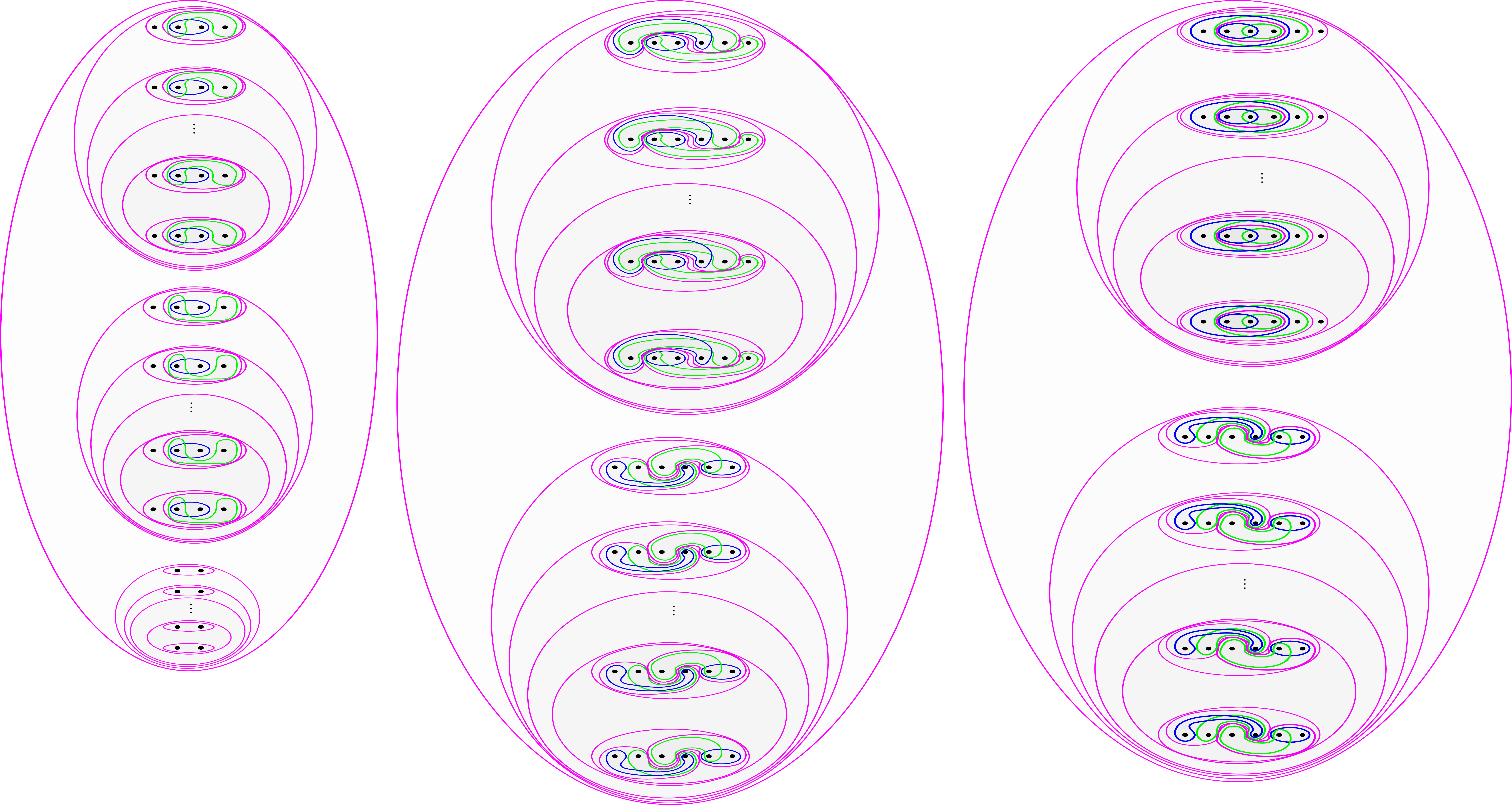}
\caption{An efficient defining pair $(p_{31}^{3},p_{23}^{3})$.}\label{fig:efficient3}
\end{figure}


\end{document}